\newcommand\R{\mathbb R}
\newcommand\br{\begin{rem}}
\newcommand\er{\end{rem}}
\newcommand\bp{\begin{pmatrix}}
\newcommand\ep{\end{pmatrix}}
\newcommand\be{\begin{equation}}
\newcommand\ee{\end{equation}}
\newcommand\ba{\begin{equation}\begin{aligned}}
\newcommand\ea{\end{aligned}\end{equation}}
\newcommand{\CalF}{\mathcal{F}}
\newcommand{\CalO}{\mathcal{O}}
\newcommand{\CalS}{\mathcal{S}}
\newcommand{\CC}{{\mathbb C}}
\newcommand{\BbbA}{{\mathbb A}}
\newcommand{\const}{\text{\rm constant}}
\newcommand{\Span}{{\rm span }  \,}
\newcommand{\Rank}{{\rm rank} \, }
\newtheorem{theo}{Theorem}[section]
\newtheorem{prop}[theo]{Proposition}
\newtheorem{cor}[theo]{Corollary}
\newtheorem{lem}[theo]{Lemma}
\newtheorem{defi}[theo]{Definition}
\newtheorem{rem}[theo]{Remark}
\numberwithin{equation}{section}
\title
{ Pointwise Green function bounds and long-time stability of
large-amplitude noncharacteristic
boundary layers }
\author{\sc \
Shantia Yarahmadian\thanks{Indiana University, Bloomington,;
syarahma@indiana.edu:
 }
and
Kevin Zumbrun\thanks{
Indiana University Department of Mathematics;
kzumbrun@indiana.edu:
 }}
\begin{document}
\maketitle
\begin{abstract}
Using pointwise semigroup techniques of Zumbrun--Howard and
Mascia--Zumbrun, we obtain sharp
global pointwise Green function bounds for noncharacteristic boundary
layers of arbitrary amplitude.
These estimates allow us to analyze linearized and
nonlinearized stability of noncharacteristic boundary layers of
one-dimensional systems of conservation laws, showing that both are
equivalent to a numerically checkable Evans function condition. Our
results extend to the large-amplitude case results obtained for small
amplitudes by Matsumura, Nishihara and others using energy
estimates.
\end{abstract}
\bigbreak
\section{Introduction}\label{introduction}
Boundary layers appear in many physical settings, such as gas
dynamics, MHD, and rotating fluids; see, for example, the physical
discussion in \cite{SGKO}. In this paper, we study the stability of
boundary layers assuming that the boundary layer solution is
noncharacteristic which means that signals are transmitted into or
out of but not along the boundary. Specifically, we onsider a
boundary layer, or stationary solution,
\begin{equation}\label{profile}
u=\bar{u}(x), \quad \lim_{z\to +\infty} \bar{u}(z)=u_+,
\quad \bar u(0)=u_0
\end{equation}
of a system of conservation laws on the quarter-plane
\begin{equation}\label{parabolic}
u_t +  f(u)_{x} = (B(u)u_{x})_{x}, \quad x,t>0,
\end{equation}
$u,f\in \mathbb{R}^n$, $B\in\mathbb{R}^{n \times n}$,
with initial data $u(x,0)=g(x)$ and Dirichlet boundary condition
\begin{equation}\label{cond}
u(0,t)= h(t).
\end{equation}
A fundamental question is whether or not
such boundary layer solutions are $\it stable$ in the sense of PDE,
i.e., whether or not a sufficiently small perturbation of $\bar{u}$
remains close to $\bar{u}$, or converges time-asymptotically to
$\bar{u}$, under the evolution of \eqref{parabolic}.

Long-time stability of boundary layers has been considered for
scalar equations in \cite{LN,LY}
and for the equations of isentropic gas dynamics  in \cite{MN, KNZ}.
The latter results, obtained by energy estimates, apply to arbitrary
amplitude layers of ``expansive inflow'' type analogous to
rarefaction waves, but only to small-amplitude layers of
``compressive inflow or outflow'' layers analogous to shock waves or
``expansive outflow'' type. For general symmetric
hyperbolic-parabolic systems, stability of small-amplitude
noncharacteristic boundary layers has been shown in multi-dimensions
for strictly parabolic systems  in \cite{GG}, and in one dimension
for partially parabolic (``real viscosity'') systems in \cite{R1}.
Here, in the spirit of results obtained for shock waves in
\cite{ZH,MZ3,MZ4},
 we show for general strictly parabolic systems of conservation laws
that linearized and nonlinear stability are equivalent to a
generalized spectral stability condition phrased in terms of the
Evans function associated with the linearized equations about the
wave, {\it independent of the amplitude of the boundary layer in
question.}

The Evans condition is readily checkable numerically, and in some
cases analytically; see \cite{Br1,Br2,BrZ,BDG,HuZ,BHRZ,HLZ}. In
particular, stability of small-amplitude uniformly noncharacteristic
boundary layers has been shown for general hyperbolic--parabolic
systems in multi-dimensions in \cite{GMWZ1} using elementary Evans
function arguments (convergence to the constant layer). An
exhaustive numerical study for isentropic gas layers in one
dimension has been carried out in \cite{CHNZ}, with the conclusion
of stability for 
arbitrary amplitudes.

Our method of analysis is by
pointwise Green function methods like those used in
\cite{ZH,MZ3,MZ4}, and especially \cite{HZ},
to analyze the stability of viscous shock layers.
Similar results have been obtained for the related
small-viscosity-limit problem in \cite{GR,MeZ,GMWZ2}. In particular, we
point to the analysis of Grenier and Rousset \cite{GR} as using
pointwise Green function estimates very similar to those that we use
here, though adapted for different purposes.

\subsection{Equations and assumptions}
Consider a {\it viscous boundary layer}, a standing-wave solution
\eqref{profile} of
a general parabolic system of conservation laws \eqref{parabolic}.
Assume, similarly as in the treatment of the viscous shock case
in \cite{HZ}:
\\

({H0}) \quad  $f,B\in C^{3}$.
\medskip

({H1}) \quad $Re \, \sigma(B) > 0$.
\medskip

({H2}) \quad $\sigma (f'(u_+))$ real, distinct, and  nonzero.
\medskip

({H3}) \quad $Re\,  \sigma(-ik f'(u_+) -k^2 B(u_+))< -\theta k^2$
for all real $k$, some $\theta>0$.
\medskip

({H4}) Solution $\bar{u}$ is unique.
\medskip

\noindent
Here, (H2)(iii) corresponds to noncharacteristicity.
\bigskip
\subsection{Linearized stability and the Evans function}
 After linearizing \ref{parabolic} about the stationary solution
$\bar{u}$, we obtain the linearized equation
\begin{equation}\label{linearized}
u_t=Lu:=-(Au)_x+(Bu_x)_x, \quad A,B \in C^2,
\end{equation}
where
\begin{equation}\label{B}
 B:= B(\bar{u})
\end{equation}
and
\begin{equation}\label{A}
 Au:= dF(\bar{u})u - dB(\bar{u}) (u,\bar{u}_{x}).
\end{equation}

\begin{defi}
The boundary layer  $\bar u$ is said to be {\it linearly
asymptotically stable}, if $u(\cdot,t)$ approaches $0$ as $t\to
\infty$, for any solution $u$ of (\ref{linearized}) with initial
data bounded in in some specified norm.
\end{defi}

We define the following {\it stability criterion}, where
$D(\lambda)$  described below, denotes the Evans function associated
with the linearized operator $L$ about the layer, an analytic
function analogous to the characteristic polynomial of a
finite-dimensional operator, whose zeroes away from the essential
spectrum agree in location and multiplicity with the eigenvalues of
$L$:
\begin{equation}\label{D}
\hbox{\rm
There exist no zeroes of $D(\cdot)$ in the nonstable
half-plane $ Re \lambda \ge 0$}.
\end{equation}
As discussed, e.g., in \cite{R2}, under assumptions (H0)--(H4),
this is equivalent to {\it strong spectral stability},
$\sigma(L)\subset \{Re \lambda < 0\}$, (ii) {\it transversality} of
$\bar u$ as a solution of the connection problem in the associated
standing-wave ODE, and {\it hyperbolic stability} of an associated
boundary value problem obtained by formal matched asymptotics.
Here and elsewhere $\sigma$ denotes spectrum of a
linearized operator or matrix.

Our first main result is as follows.

\begin{theo}\label{linstab}
Assuming (H0)--(H4), linearized asymptotic $L^1\cap L^p\to L^p$ stability,
$p>1$, is equivalent to \eqref{D}.
\end{theo}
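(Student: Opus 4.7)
The plan is to establish the two implications via a pointwise representation of the Green function $G(x,t;y)$ of $L$ under homogeneous Dirichlet boundary data, in the spirit of \cite{ZH,MZ3,MZ4,HZ}. The Evans function $D(\lambda)$ arises naturally as a Wronskian-type object measuring compatibility between the space of solutions of $(L-\lambda)\phi=0$ decaying at $+\infty$ (furnished, via the Gap and Conjugation lemmas of \cite{MZ3}, by assumptions (H1)--(H3)) and those satisfying $\phi(0)=0$; under (H0)--(H4), its zeroes in $Re\,\lambda \ge 0$ coincide with the point eigenvalues of $L$ together with any resonances on the essential spectrum boundary.

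The necessity direction is the easier one: a zero $\lambda_0$ of $D$ with $Re\,\lambda_0 \ge 0$ yields an eigenfunction $\phi$ that decays exponentially at $+\infty$ (hence lies in $L^1 \cap L^p$) and satisfies $\phi(0)=0$, so the solution $e^{\lambda_0 t}\phi$ of \eqref{linearized} does not decay in $L^p$, contradicting the stability hypothesis. A root sitting on the essential spectrum boundary is handled by a generalized eigenfunction argument producing at best algebraic non-decay, still ruling out asymptotic stability.

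For sufficiency, represent
\[
e^{Lt} g(x) = \int_0^\infty G(x,t;y) g(y)\, dy, \qquad G(x,t;y) = \frac{1}{2\pi i} \int_\Gamma e^{\lambda t} G_\lambda(x,y)\, d\lambda,
\]
where the resolvent kernel $G_\lambda(x,y)$ is built from the two dual bases above, with $D(\lambda)$ appearing as a normalizing Wronskian. Condition \eqref{D} combined with (H2)--(H3) provides analyticity of $G_\lambda$ in a strip $Re\,\lambda \ge -\eta$, $\eta>0$, away from any neighborhood of $\lambda=0$, permitting a contour shift there that produces exponential decay in $t$.

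The principal obstacle, as in \cite{MZ3,HZ}, is the low-frequency analysis near $\lambda=0$ where the essential spectrum touches the imaginary axis and a naive contour shift fails. Following the Mascia--Zumbrun strategy, I would expand the Evans basis in a fractional-power neighborhood of $\lambda=0$ and peel off explicit ``scattering'' contributions coming from the limiting constant-coefficient operator at $u_+$; these supply the dominant Gaussian and characteristic-transport pieces of $G$, of schematic form
\[
|G(x,t;y)| \le C \sum_{a_j^+ > 0} t^{-1/2} e^{-(x-y-a_j^+ t)^2/Mt} + (\text{terms exponentially localized at the boundary}),
\]
the sum running over the outgoing characteristic speeds of $f'(u_+)$, which by the noncharacteristicity built into (H2) are uniformly bounded away from zero. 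The remaining piece is bounded by contour deformation into $Re\,\lambda < 0$ using \eqref{D}. Convolving these pointwise bounds against $g\in L^1\cap L^p$ and invoking standard heat-kernel $L^q$ integrations together with Hausdorff--Young yields the claimed $L^1\cap L^p \to L^p$ decay, closing the equivalence.
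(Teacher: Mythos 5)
Your proposal follows essentially the same route as the paper: necessity via the eigenfunction attached to an unstable zero of $D$, and sufficiency via the inverse Laplace transform of the resolvent kernel, with high-frequency resolvent bounds permitting contour deformation and a low-frequency expansion in the slow modes of the limiting constant-coefficient operator at $u_+$ yielding the pointwise Gaussian transport bounds of Theorem \ref{greenbounds}, from which the $L^1\cap L^p\to L^p$ decay follows by convolution. The only cosmetic difference is that your schematic bound omits the reflected-wave terms $\chi_{\{|a_k^+t|\ge|y|\}}\,t^{-1/2}e^{-(x-a_j^+(t-|y/a_k^+|))^2/Mt}$ of \eqref{Gbounds}, which contribute at the same $L^p$ rate and do not change the argument.
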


Theorem \ref{linstab} is obtained as a consequence of
the following detailed, pointwise bounds on the Green function
$G(x,t;y)$ of the linearized evolution equations
\eqref{linearized} with homogeneous boundary conditions
(more properly speaking, a distribution), defined by:

(i) $(\partial_t -L_x)G=0$ in the distributional sense, for all $x,y,t>0$;

(ii) $G(x, t;y)\rightarrow \delta(x-y)$ as $t\to 0$;

(iii) $G(0, t;y)\equiv 0$, for all $y,t>0$.

\noindent Denote by
\begin{equation}
a_1^+<a_2^+ < \cdots < a_n^+
\end{equation}
the eigenvalues of of the limiting convection matrix $A_+:=
df(u_+)$. \\

Then, our second main result is as follows.

\begin{theo}\label{greenbounds}
Assuming (H0)--(H4) and stability condition \eqref{D},
\begin{equation}\label{Gbounds}
\begin{aligned}
|\partial_{x}^\gamma \partial_y^\alpha & G(x,t;y)|\le  Ce^{-\eta(|x-y|+t)}\\
& +\quad C(t^{-|\alpha|/2}+
|\alpha| e^{-\eta|y|} +|\gamma| e^{-\eta|x|})
\Big( \sum_{k=1}^n
t^{-1/2}e^{-(x-y-a_k^{-} t)^2/Mt} \\
&+
\sum_{a_k^{+} < 0, \, a_j^{+} > 0}
\chi_{\{ |a_k^{+} t|\ge |y| \}}
t^{-1/2} e^{-(x-a_j^{+}(t-|y/a_k^{+}|))^2/Mt} \Big),\\
\end{aligned}
\end{equation}
$0\le |\alpha|, |\gamma| \le 1$, for some $\eta$, $C$, $M>0$, where $x^+$
denotes the positive/negative part of $x$,  indicator function
$\chi_{\{ |a_k^{-}t|\ge |y| \}}$ is $1$ for $|a_k^{-}t|\ge |y|$ and
$0$ otherwise.
\end{theo}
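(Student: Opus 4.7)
The plan is to represent $G$ by the inverse Laplace transform of the resolvent kernel,
\[
G(x,t;y)=\frac{1}{2\pi i}\int_\Gamma e^{\lambda t}\, G_\lambda(x,y)\, d\lambda,
\]
and to read off the pointwise bounds \eqref{Gbounds} by deforming $\Gamma$ into $\{\mathrm{Re}\,\lambda<0\}$ and estimating $G_\lambda$ in three frequency regimes, following the pointwise semigroup strategy of \cite{ZH,MZ3,MZ4,HZ}. Starting with $\Gamma=\{\mathrm{Re}\,\lambda=\eta_0\}$ for $\eta_0$ large, assumptions (H0)--(H4) together with the Evans condition \eqref{D} imply that $G_\lambda$ admits an analytic continuation from $\{\mathrm{Re}\,\lambda>0\}$ through the essential spectrum at $\lambda=0$ onto appropriate Riemann branches, so that $\Gamma$ can be deformed into a union of three pieces: a deep line $\mathrm{Re}\,\lambda=-\eta$ (producing the $e^{-\eta(|x-y|+t)}$ part); large parabolic arcs at high frequency (producing the short-time smoothing); and small parabolas tangent to the characteristic curves at $\lambda=0$, one for each eigenvalue $a_k^+$ of $df(u_+)$ (producing the convection Gaussians).

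To construct $G_\lambda$, I would write $(L-\lambda)u=0$ as a first-order system $W'=\mathbb{A}(x,\lambda)W$ and produce two distinguished bases of solutions: a basis $\{\phi_j^+\}$ spanning the stable manifold at $+\infty$, constructed via the conjugation/gap lemma of \cite{MZ3} to control the slow modes uniformly up to $\mathrm{Re}\,\lambda=0$; and a basis $\{\psi_j\}$ of solutions vanishing at $x=0$. Solving $(L-\lambda)G_\lambda=\delta_y$ by matching these across $x=y$ via Cramer's rule yields a formula whose denominator is the Evans determinant $D(\lambda)$, nonzero by \eqref{D}. The estimation splits into three regimes. At \emph{high frequency} ($|\lambda|\ge R$), parabolic tracking plus a gauge transformation removing the convective drift yields $|G_\lambda|\le C|\lambda|^{-1/2}e^{-\theta|\lambda|^{1/2}|x-y|}$. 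At \emph{mid frequency}, \eqref{D} plus compactness gives a uniform bound and the contour stays in $\mathrm{Re}\,\lambda\le -\eta$. At \emph{low frequency} ($|\lambda|\le r$), expansion of $\{\phi_j^+\}$ in the slow eigenvalues $\mu_j^+(\lambda)\sim -\lambda/a_j^+$ of $\mathbb{A}(+\infty,\lambda)$ yields transmitted contributions of the form $(a_j^+)^{-1}e^{\mu_j^+(\lambda)(x-y)}$, and, after reflection off the boundary, contributions of the form $e^{\mu_j^+(\lambda)x}e^{\mu_k^+(\lambda)|y|}$ with $a_k^+<0$, $a_j^+>0$. Laplace inversion along the small parabolas converts the transmitted pieces into the convection Gaussians $t^{-1/2}e^{-(x-y-a_k^+ t)^2/Mt}$; the reflected packets $e^{-(x-a_j^+(t-|y/a_k^+|))^2/Mt}$ together with the cutoff $\chi_{\{|a_k^+ t|\ge|y|\}}$ then arise from the extra factor $e^{\mu_k^+(\lambda)|y|}$, whose saddle is shifted by the travel time $|y/a_k^+|$ from $y$ to the boundary.

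The derivative bounds $\partial_x^\gamma \partial_y^\alpha G$ are then obtained by differentiating the resolvent representation under the integral: each derivative costs one power of $|\lambda|^{1/2}$ at high frequency, accounting for the $t^{-|\alpha|/2}$ factor; at low frequency, $\partial_y$ brings down $\mu_j^+(\lambda)$ and leaves the Gaussian structure intact, while any derivative that lands on the profile $\bar u$ or $\bar u_x$ in the coefficients of $L$ supplies the localization factors $e^{-\eta|x|}$ or $e^{-\eta|y|}$ because $\bar u - u_+$ decays exponentially by (H0)--(H4). The main obstacle is the low-frequency analysis: constructing $\{\phi_j^+\}$ uniformly through the essential spectrum via the gap lemma, and then carefully bookkeeping the scattering of each incoming slow mode ($a_k^+<0$) off the Dirichlet boundary into each outgoing slow mode ($a_j^+>0$), so that the doubly-indexed reflected Gaussians in \eqref{Gbounds} emerge with the correct amplitude, saddle location, and cutoff. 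This is the boundary-layer analogue of the shock-scattering calculus of \cite{MZ3,HZ}; once it is carried out and the three regimes are summed, \eqref{Gbounds} follows.
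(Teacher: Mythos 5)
Your outline reproduces the paper's own strategy almost step for step: the inverse Laplace transform representation \eqref{iLT}, contour deformation into $\Omega_\theta$ justified by the Evans condition \eqref{D}, the splitting into high/medium/low frequency resolvent estimates, and at low frequency the expansion in slow modes $\mu_j^+(\lambda)\sim-\lambda/a_j^+$ followed by a saddle-point (stationary phase) choice of contour, which is exactly how the paper's Cases~I and II(ia)--(ii) produce the convected Gaussians, the reflected Gaussians $t^{-1/2}e^{-(x-a_j^+(t-|y/a_k^+|))^2/Mt}$, and the cutoff $\chi_{\{|a_k^+t|\ge|y|\}}$; your account of where the $t^{-|\alpha|/2}$ and $e^{-\eta|x|}$, $e^{-\eta|y|}$ factors come from is likewise consistent with the resolvent bounds \eqref{ptres} and \eqref{hfbdeq}.

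There is, however, one genuine gap, and it is precisely the point the paper flags as the new ingredient of the boundary-layer case. At high frequency you assert $|G_\lambda|\le C|\lambda|^{-1/2}e^{-\theta|\lambda|^{1/2}|x-y|}$ ``by parabolic tracking plus a gauge transformation,'' with the Evans determinant controlled by \eqref{D}. On the whole line this is enough, but on the quarter-plane the representation \eqref{repeq} involves the projection onto the stable manifold $\Phi^+$ along the subspace $\Phi^0$ of solutions satisfying the Dirichlet condition (and its adjoint analogue), and one must prove that these projections remain \emph{uniformly bounded as $|\lambda|\to\infty$} in the sector --- equivalently, that the boundary subspace stays uniformly transverse to the stable subspace in the rescaled variables $\bar x=|\lambda|^{1/2}x$. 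The condition \eqref{D} only gives nonvanishing of $D$ pointwise (hence, with the low-frequency analysis, on compact sets); it does not by itself yield a uniform lower bound on your ``Cramer's rule denominator'' at high frequency, so the constants in your claimed resolvent bound are not controlled. The paper supplies exactly this step in Lemma \ref{projlemma} (bound \eqref{Pi(y)}), using the block structure of the rescaled symbol to show compatibility of the Dirichlet condition with the high-frequency stable/unstable splitting, and the remark following the high-frequency proposition states that this is the key difference from the whole-line analysis of \cite{ZH}, where high-frequency stability is automatic. Once such a projection/transversality estimate is added, the rest of your plan --- including the low-frequency scattering bookkeeping you correctly identify as delicate --- coincides with the paper's proof.
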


\subsection{Nonlinear stability}\label{nonlinintro}
\begin{defi}
The boundary layer  $\bar u$ is said to be {\it nonlinearly
asymptotically stable} if $\tilde u(\cdot,t)$ exists for all $t\ge0$
and approaches $\bar u$ as $t\to \infty$, for any solution $\tilde u$ of
(\ref{parabolic}) with initial data sufficiently close in some norm
to the original layer $\bar u$.
\end{defi}
\medskip
Denoting by
\begin{equation}
a_1^+<a_2^+ < \cdots < a_n^+
\end{equation}
the eigenvalues of of the limiting convection matrix $A_+:=
df(u_+)$,
\medskip
define
\begin{equation}\label{theta}
\theta(x,t):= \sum_{a_j^+>0}(1+t)^{-1/2}e^{-|x-a_j^+t|^2/Lt},
\end{equation}
\begin{equation}\label{psi1}
\begin{aligned}
\psi_1(x,t)&:= \chi(x,t)\sum_{a_j^+>0}
(1+|x|+t)^{-1/2} (1+|x-a_j^+t|)^{-1/2},\\
\end{aligned}
\end{equation}
and
\begin{equation}\label{psi2}
\begin{aligned}
\psi_2(x,t)&:=
(1-\chi(x,t))(1+|x-a_n^+t|+t^{1/2})^{-3/2},
\end{aligned}
\end{equation}
where $\chi(x,t)=1$ for $x\in [0,a_n^+t]$ and $\chi(x,t)=0$ otherwise
and $L>0$ is a sufficiently large constant.
For simplicity, take $B$ identically constant.
Then, our third and final main result is as follows.

\begin{theo}\label{nonlin}
Assuming (H0)--(H4), $B\equiv \const$,
and the linear stability condition \eqref{D},
the profile $\bar u$ is nonlinearly asymptotically stable with
respect to perturbations $g$, $h$ in initial and boundary data
satisfying
$$
|g(x)|\le E_0 (1+|x|)^{-3/2},
\quad
|h(t)|\le E_0 (1+|t|)^{-3/2},
\quad
|h'(t)|\le E_0 (1+|t|)^{-1}
$$
for $E_0$ sufficiently small. More precisely,
\begin{equation}\label{pointwise}
\begin{aligned}
|\tilde u(x,t)-\bar u(x)|&\le C E_0
(\theta+\psi_1+\psi_2)(x,t),\\
\end{aligned}
\end{equation}
where $\tilde u$ denotes the solution of \eqref{parabolic} with initial data
$\tilde g=\bar u+g$ and boundary data $\tilde h=u_0+h$.
\end{theo}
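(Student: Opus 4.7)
The plan is to implement the pointwise stability scheme of Zumbrun--Howard and Mascia--Zumbrun \cite{ZH,MZ3,MZ4,HZ}: derive a Duhamel representation of the perturbation in terms of the Green function bounded in Theorem \ref{greenbounds}, then close a pointwise estimate in the template $\theta+\psi_1+\psi_2$ by continuous induction. Writing $v:=\tilde u-\bar u$ and using $B\equiv\const$, one finds that $v$ satisfies
\begin{equation*}
v_t - L v = -N(v)_x, \quad v(x,0)=g(x), \quad v(0,t) = h(t),
\end{equation*}
with $L$ as in \eqref{linearized} and $N(v):= f(\bar u+v) - f(\bar u) - df(\bar u)v = O(|v|^2)$.

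Convolving against the homogeneous-boundary Green function $G$ and integrating by parts in $y$ (to move the derivative off $N(v)$) and in $s$ (to convert the inhomogeneous data $h$ into a boundary kernel), I obtain
\begin{equation*}
\begin{aligned}
v(x,t) = & \int_0^\infty G(x,t;y)g(y)\,dy - \int_0^t BG_y(x,t-s;0)\,h(s)\,ds \\
& + \int_0^t\!\!\int_0^\infty G_y(x,t-s;y)\, N(v)(y,s)\,dy\,ds,
\end{aligned}
\end{equation*}
with any auxiliary boundary terms involving $h'$ controlled by the assumed decay $|h'(t)|\le E_0(1+t)^{-1}$.

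The heart of the proof is a bootstrap on
\begin{equation*}
\zeta(T):= \sup_{0\le t\le T,\, x\ge 0} |v(x,t)|/(\theta+\psi_1+\psi_2)(x,t).
\end{equation*}
Using $|N(v)|\le C|v|^2 \le C\zeta(T)^2(\theta+\psi_1+\psi_2)^2$ and the bounds of Theorem \ref{greenbounds}, I must establish the three convolution inequalities
\begin{equation*}
\begin{aligned}
\int_0^\infty |G(x,t;y)|(1+|y|)^{-3/2}\,dy &\le C(\theta+\psi_1+\psi_2)(x,t), \\
\int_0^t |G_y(x,t-s;0)|(1+s)^{-3/2}\,ds &\le C(\theta+\psi_1+\psi_2)(x,t), \\
\int_0^t\!\!\int_0^\infty |G_y(x,t-s;y)|(\theta+\psi_1+\psi_2)^2(y,s)\,dy\,ds &\le C(\theta+\psi_1+\psi_2)(x,t).
\end{aligned}
\end{equation*}
Together these yield $\zeta(T)\le C_1 E_0 + C_2 \zeta(T)^2$, from which $\zeta(T)\le 2C_1 E_0$ for $E_0$ sufficiently small, proving \eqref{pointwise}; global existence of $\tilde u$ follows as a byproduct via standard short-time well-posedness.

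The chief technical difficulty will lie in the second and third convolution estimates in the presence of the reflected-characteristic kernel $\chi_{\{|a_k^+t|\ge|y|\}}\,t^{-1/2}e^{-(x-a_j^+(t-|y/a_k^+|))^2/Mt}$ appearing in Theorem \ref{greenbounds}. This term records a signal incoming on characteristic speed $a_k^+<0$, reflecting off the boundary at time $|y/a_k^+|$, and re-emerging along the outgoing speed $a_j^+>0$; bounding its convolutions by $\theta+\psi_1+\psi_2$ will require a careful change of variables tracing this reflected path, and it is precisely here that the $(1+\cdot)^{-3/2}$ decay hypothesis on $g$ and $h$ becomes essential. The interaction of the $\psi_1$ template with the boundary kernel $G_y(x,t-s;0)$ near $x=0$, where both objects are most singular, is the delicate point; the $(1+s)^{-3/2}$ decay of $h$ together with the $e^{-\eta|x|}$ localization factor on derivatives in Theorem \ref{greenbounds} will provide exactly the compensation needed.
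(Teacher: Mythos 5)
Your overall architecture (Duhamel representation via the homogeneous-boundary Green function, bootstrap on $\zeta$, continuous induction, with the three convolution estimates as the engine) is exactly the paper's scheme, and the first and third convolution inequalities correspond to Lemmas \ref{iniconvolutions} and \ref{convolutions} (which the paper imports from \cite{HZ}). The genuine gap is in your second, boundary convolution inequality. You propose to bound
$\int_0^t |G_y(x,t-s;0)|\,(1+s)^{-3/2}\,ds$
directly, with absolute values inside and using only the decay of $h$. This cannot be closed from the bounds of Theorem \ref{greenbounds}: for $|\alpha|=1$, $|\gamma|=0$, $y=0$, the bound \eqref{Gbounds} gives $|G_y(x,\tau;0)|\lesssim \tau^{-1}e^{-x^2/M\tau}$ near $\tau=0$ (the $e^{-\eta|x|}$ factor you invoke attaches to the $x$-derivative, $|\gamma|=1$, not to $G_y$), and $\int_0^1 \tau^{-1}e^{-x^2/M\tau}\,d\tau$ diverges logarithmically as $x\to 0$, so no uniform bound by $\theta+\psi_1+\psi_2$ follows. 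This non-integrable singularity at $s\to t$ is precisely why the theorem hypothesizes decay of $h'$, which your argument never actually uses: you mention ``auxiliary terms involving $h'$'' but your stated boundary inequality contains only $h$.

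The paper's Lemma \ref{boundaryconvolution} resolves this by exploiting structure beyond the pointwise bounds: splitting $\int_0^{t-1}+\int_{t-1}^t$, and for the singular piece integrating the adjoint equation \eqref{explicit} in $y$ (using $G(x,t-s;0)\equiv 0$) to obtain the identity \eqref{keyrel},
$G_yB=-\int_0^{+\infty}A_yG\,dy-\int_0^{+\infty}G_s\,dy$,
then integrating by parts in $s$ as in \eqref{partsest}; the $s$-integration by parts transfers the singular derivative onto $h$, producing the $h'$ term controlled by $|h'(t)|\le E_0(1+t)^{-1}$, while the remaining kernels involve $G$ itself (integrable) rather than $G_y$. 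The crude estimate \eqref{basicest} is then only used where the $|x|^{-2}$ gain makes it harmless. Without some such cancellation/integration-by-parts step your bootstrap does not close near the boundary, so as written the proposal has a genuine gap at the one point where the present boundary-layer problem differs from the whole-line shock analysis of \cite{HZ}.
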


\begin{rem}
Pointwise bound \eqref{pointwise}
yields as a corollary the sharp $L^p$ decay rate
\begin{equation}
\label{Lp} |\tilde u(x,t)-\bar u(x)|_{L^p}\le C E_0
(1+t)^{-\frac{1}{2}(1-\frac{1}{p})}, \quad 1\le p\le \infty.
\end{equation}
\end{rem}

\subsection{Discussion and open problems}

The case of boundary layers is quite analogous to the
undercompressive shock case; in particular, pointwise
estimates as in \cite{HZ} appear to be necessary to close
the one-dimensional analysis by a linearized semigroup
approach suitable for large-amplitude layers.
(On the other hand, small-amplitude stability has been established
using energy estimates in, e.g., \cite{MN,GG}.)
A new feature of the present analysis as compared to those
of \cite{HZ,HR,HRZ} is the admission
of perturbations in boundary as well as initial data.
Open problems are extensions
to systems with physical (partial) or quasilinear viscosity
and to multi-dimensional boundary layers.

\section{The Evans Function}
Before starting the analysis, we review the basic Evans function
methods and gap/conjugation lemma.

\subsection{The gap/conjugation lemma}
Consider a family of first order ODE systems on the half-line:
\begin{equation}\label{gen}
\begin{aligned}
W'&=\mathbb{A}(x,\lambda)W,
\quad \lambda \in \Omega \quad {\rm and} \quad x>0,\\
\mathbb{B}(\lambda)W&=0, \quad \lambda \in \Omega \quad {\rm and} \quad x=0.
\end{aligned}
\end{equation}
These systems of ODEs should be considered as a generalized
eigenvalue equation, with $\lambda$ representing frequency.
We assume that the boundary matrix $\mathbb{B}$ is analytic
in $\lambda$ and that the coefficient matrix $\mathbb{A}$ is analytic in
$\lambda$ as a function from $\Omega$ into $L^\infty(x)$,
$C^K$ in $x$, and
approaches exponentially to a limit $\mathbb{A}_+(\lambda) $ as $x\rightarrow
\infty$, with uniform exponentially decay estimates

\begin{equation}\label{h0}
|(\partial/\partial x)^k(\BbbA- \BbbA_+)| \le C_1e^{-\theta|x|/C_2},
\, \quad \text{\rm for } x>0, \, 0\le k\le K,
\end{equation}
$C_j$, $\theta>0$, on compact subsets of $\Omega $. Now we can state
a refinement of the ``Gap Lemma'' of \cite{GZ,KS}, relating solutions
of the variable-coefficient ODE to the solutions of its
constant-coefficient limiting equations
\begin{equation}\label{limgen}
 Z'=\mathbb{A}_+(\lambda)Z
\end{equation}
as $x\rightarrow  +\infty $.
\begin{lem}[Conjugation Lemma \cite{MeZ}]
Under assumption \eqref{h0}, there exists locally to any given
$\lambda_0\in \Omega $ a linear transformation
$P_+(x,\lambda)=I+\Theta_+(x,\lambda)$  on $x\ge 0$, $\Phi_+$
analytic in $\lambda$ as functions from $\Omega$ to $L^\infty
[0,+\infty)$, such that:
\medskip

(i) $|P_+|$ and their inverses are uniformly bounded, with
\begin{equation} \label{Theta}
|(\partial/\partial \lambda)^j(\partial/\partial x)^k \Theta_+ |\le
C(j) C_1 C_2 e^{-\theta |x|/C_2} \quad \text{\rm for } x>0, \, 0\le
k\le K+1,
\end{equation}
$j\ge 0$, where $0<\theta<1$ is an arbitrary fixed parameter, and
$C>0$ and the size of the neighborhood of definition depend only on
$\theta$, $j$, the modulus of the entries of $\BbbA$ at $\lambda_0$,
and the modulus of continuity of $\BbbA$ on some neighborhood of
$\lambda_0 \in \Omega $.
\smallskip
\\
(ii)  The change of coordinates $W:=P_+ Z$ reduces \eqref{gen}
on $x\ge 0$ to the asymptotic constant-coefficient equations \eqref{limgen}.
Equivalently, solutions of \eqref{gen} may be conveniently factorized as
\begin{equation}
W=(I+ \Theta_+)Z_+,
\end{equation}
where $Z_+$ are solutions of the constant-coefficient equations, and
$\Theta_+$ satisfy bounds.
\end{lem}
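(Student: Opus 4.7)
The plan is to realize $P_+ = I + \Theta_+$ as the fixed point of a Gap-Lemma style integral equation, then bootstrap to the regularity and analyticity bounds in \eqref{Theta}.

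First, the requirement that the change of coordinates $W = P_+ Z$ carry solutions of \eqref{limgen} to solutions of \eqref{gen} is equivalent to $P_+' = \mathbb{A}P_+ - P_+\mathbb{A}_+$. Writing $P_+ = I + \Theta_+$ and $\delta\mathbb{A} := \mathbb{A} - \mathbb{A}_+$, this reduces to the affine matrix ODE
$$\Theta_+' - \mathbb{A}_+\Theta_+ + \Theta_+\mathbb{A}_+ = \delta\mathbb{A}\,(I+\Theta_+),$$
whose vectorized linear part is the Sylvester operator $\mathcal{M}_+ := \mathbb{A}_+\otimes I - I\otimes \mathbb{A}_+^{\mathrm T}$, with spectrum the pairwise differences $\mu_i - \mu_j$ of eigenvalues of $\mathbb{A}_+(\lambda)$.

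Second, I would implement the Gap-Lemma integral representation. Fix any $\theta' \in (0,\theta)$, with $\theta$ as in \eqref{h0}; by continuity of eigenvalues there is a neighborhood of $\lambda_0$ on which $-\theta'/C_2$ avoids $\operatorname{Re}\sigma(\mathcal{M}_+(\lambda))$, so the Riesz projections $\Pi^\pm$ onto eigenvalues with $\operatorname{Re}\nu$ above resp.\ below $-\theta'/C_2$ are holomorphic in $\lambda$ there. I would seek $\Theta_+$ as the fixed point on $[x_0,\infty)$, $x_0$ to be chosen large, of
$$\mathcal{T}[\Theta](x) := -\int_x^\infty e^{\mathcal{M}_+(x-y)}\Pi^+\bigl[\delta\mathbb{A}(y)(I+\Theta(y))\bigr]\,dy + \int_{x_0}^x e^{\mathcal{M}_+(x-y)}\Pi^-\bigl[\delta\mathbb{A}(y)(I+\Theta(y))\bigr]\,dy$$
in the weighted space $X := \{\Theta \in L^\infty([x_0,\infty)) : \|\Theta\| := \sup_{x\ge x_0}e^{\theta' x/C_2}|\Theta(x)| < \infty\}$. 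The first integral converges because on the $\Pi^+$ subspace $|e^{\mathcal{M}_+(x-y)}| \lesssim \langle y-x\rangle^{n^2-1}e^{(\theta'/C_2)(y-x)}$ for $y\ge x$ (Jordan-block polynomials against the spectral growth rate), defeated by $|\delta\mathbb{A}(y)| \lesssim C_1 e^{-\theta y/C_2}$; the second converges analogously via $|e^{\mathcal{M}_+(x-y)}\Pi^-| \lesssim \langle x-y\rangle^{n^2-1}e^{-(\theta'/C_2)(x-y)}$ for $x\ge y$. A direct estimate yields $\|\mathcal{T}[\Theta] - \mathcal{T}[\tilde\Theta]\| \lesssim C_1 e^{-\theta x_0/C_2}\|\Theta - \tilde\Theta\|$, so choosing $x_0$ large makes $\mathcal{T}$ a strict contraction on $X$, with unique fixed point $\Theta_+$ satisfying $|\Theta_+(x)| \lesssim C_1 C_2\, e^{-\theta' x/C_2}$.

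Third, I would extend and upgrade. The linear ODE above, read as a Cauchy problem backward from $x_0$, extends $\Theta_+$ uniquely and boundedly to $[0,x_0]$; on this compact interval all bounds absorb into the constants. Invertibility of $P_+(x)$ follows by an injectivity argument: if $P_+(x_*)v = 0$ with $v\ne 0$, then $W(x) := P_+(x)\,e^{\mathbb{A}_+(x-x_*)}v$ solves $W' = \mathbb{A}W$ with $W(x_*)=0$, hence $W\equiv 0$, contradicting $P_+(x)\to I$ at $+\infty$ and nonvanishing of $e^{\mathbb{A}_+(x-x_*)}v$. Analyticity in $\lambda$ is inherited by each Picard iterate from that of $\mathbb{A}$, $\Pi^\pm$, and the matrix exponential, and survives uniform convergence on compact $\lambda$-subsets. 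The $x$-derivative bounds up to order $K+1$ in \eqref{Theta} follow by differentiating the ODE directly; the $\lambda$-derivative bounds follow either by differentiating the fixed-point equation and iterating, or more cleanly from Cauchy's integral formula applied on a slightly smaller neighborhood of $\lambda_0$.

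The principal technical obstacle is keeping the spectral decomposition and contraction uniform as $\lambda$ varies near $\lambda_0$: the eigenvalues of $\mathcal{M}_+(\lambda)$ move with $\lambda$, so the projections $\Pi^\pm$, the Jordan-block constants entering $|e^{\mathcal{M}_+(x-y)}\Pi^\pm|$, and the threshold $x_0$ guaranteeing contraction must all be controlled purely by the local data at $\lambda_0$. This is exactly why the lemma's constants and the size of the $\lambda$-neighborhood depend on the modulus and modulus of continuity of $\mathbb{A}$ at $\lambda_0$, and why the conclusion is stated only locally in $\lambda$.
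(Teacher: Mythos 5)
Your proposal is correct and takes essentially the same route as the paper: the paper also passes to the lifted matrix ODE $P'=\mathbb{A}P-P\mathbb{A}_+$ (equivalently $\Theta_+'=[\mathbb{A}_+,\Theta_+]+(\mathbb{A}-\mathbb{A}_+)(I+\Theta_+)$) and simply invokes the Gap Lemma of \cite{GZ,KS} for the $j=k=0$ bound, then obtains the $x$-derivative bounds from the ODE and its first $K$ derivatives and the $\lambda$-derivative bounds from interior estimates for analytic functions — precisely the steps you carry out, except that you reprove the Gap Lemma via the spectral splitting and contraction mapping rather than citing it. The only nit is that you should choose $\theta'$ so that $-\theta'/C_2$ avoids the finitely many real parts of $\sigma(\mathcal{M}_+(\lambda_0))$ (not ``any'' $\theta'\in(0,\theta)$), after which continuity in $\lambda$ gives the uniform splitting you use.
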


\begin{proof}
As described in \cite{MZ3}, for $j=k=0$ this is a straightforward
corollary of the gap lemma as stated in [Z.3], applied to the
``lifted'' matrix-valued ODE
$$
P'= \mathbb{A}_+P- P\mathbb{A}+ (\mathbb{A}-\mathbb{A}_+)P
$$
for the conjugating matrices $P_+$. The
$x$-derivative bounds $0<k\le K+1$ then follow from the ODE and its
first $K$ derivatives. Finally, the $\lambda$-derivative bounds
follow from standard interior estimates for analytic functions.
\end{proof}

\begin{defi}\label{consplit}
Following \cite{AGJ}, we define the {\it domain of
consistent splitting} for the ODE system $W'=\mathbb{A}(x,\lambda)W$
as the (open) set of $\lambda $ such that the limiting matrix
$\BbbA_+$ is hyperbolic (has no center subspace) and the
boundary matrix $\mathbb{B}$ is full rank, with
$\dim S_+=\Rank \mathbb{B}$.
\end{defi}

\begin{lem}\label{bases}
On any simply connected subset of the domain of consistent
splitting, there exist analytic bases $\{v_{1}, \dots, v_{k}\}^+$
and $\{v_{k+1}, \dots, v_{N}\}^+$ for the subspaces $S_+$ and $U_+$
defined in Definition \ref{consplit}.
\end{lem}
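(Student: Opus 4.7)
The plan is to construct local analytic frames for $S_+(\lambda)$ and $U_+(\lambda)$ using spectral projections, then globalize these to the entire simply connected subset by monodromy.

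First I would set up the spectral projections. On the domain of consistent splitting the limiting matrix $\BbbA_+(\lambda)$ is hyperbolic, so for each $\lambda_0$ we can choose disjoint positively oriented contours $\Gamma_S$, $\Gamma_U \subset \CC$ enclosing respectively the eigenvalues of $\BbbA_+(\lambda_0)$ with negative and positive real part. By continuity of eigenvalues these same contours work for $\lambda$ in a neighborhood $\CalO_{\lambda_0}$ of $\lambda_0$, and the Dunford projections
\be
\Pi_{S/U}(\lambda) \;=\; \frac{1}{2\pi i}\oint_{\Gamma_{S/U}} (\mu I - \BbbA_+(\lambda))^{-1}\, d\mu
\ee
are analytic in $\lambda$ on $\CalO_{\lambda_0}$, with ranges $S_+(\lambda)$ and $U_+(\lambda)$ of constant ranks $k$ and $N-k$.

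Next I would build local analytic bases by projecting a fixed basis. Choose bases $\{e_1,\dots,e_k\}$ of $S_+(\lambda_0)$ and $\{e_{k+1},\dots,e_N\}$ of $U_+(\lambda_0)$, and set $v_j(\lambda):=\Pi_S(\lambda)e_j$ for $j\le k$, $v_j(\lambda):=\Pi_U(\lambda)e_j$ for $j>k$. These are analytic, equal $e_j$ at $\lambda_0$, and by continuity of the determinant remain linearly independent on a possibly smaller neighborhood of $\lambda_0$. This produces the required analytic frames locally.

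Finally I would globalize via the monodromy theorem. On a simply connected open subset of $\Omega\subset\CC$, the analytic continuation of a local frame constructed above along any path is path-independent, so one obtains globally defined, single-valued analytic sections $v_1,\dots,v_N$ whose values at each $\lambda$ are bases of $S_+(\lambda)$ and $U_+(\lambda)$. Equivalently, the subbundles of the trivial bundle $\Omega\times\CC^N$ cut out by $\Pi_{S/U}$ are analytic vector bundles over a simply connected base and hence analytically trivial.

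The main obstacle is the topological step of ensuring that locally defined frames assemble into single-valued global ones rather than acquiring monodromy; this is precisely where the simple-connectivity hypothesis is essential, as on a multiply connected domain (or near a branch point induced by an eigenvalue crossing outside the contours) analytic continuation can produce permutations of basis vectors and the global bases need not exist.
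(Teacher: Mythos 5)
Your local construction is fine and is essentially the same starting point as the paper: on the domain of consistent splitting the spectral separation of $S_+$ and $U_+$ makes the Dunford/Riesz eigenprojections $\Pi_{S/U}(\lambda)$ analytic, and projecting a fixed basis of $S_+(\lambda_0)$ gives an analytic frame near $\lambda_0$. The gap is in the globalization step. The monodromy theorem applies to analytic continuation of a specific analytic function element, but your local frames are not related to one another by analytic continuation in any canonical way: if you interpret the continuation as continuing $v_j(\lambda)=\Pi_S(\lambda)e_j$, these are already single-valued analytic functions on the whole domain (no monodromy to worry about), and the real failure mode is that they can become linearly dependent away from $\lambda_0$ — nothing in your argument rules this out; if instead you re-project a freshly chosen reference basis on each new neighborhood, you are patching frames, not continuing one, and the monodromy theorem says nothing about whether the patches can be made to agree. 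Likewise, the closing remark that an analytic vector bundle over a simply connected base is ``hence analytically trivial'' is not a theorem in that generality (compare $\CC\PP^1$); for open subsets of $\CC$ triviality does hold, but by Grauert-type results, not by simple connectivity alone, so as stated this does not close the gap either.

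The standard repair — and this is exactly what the paper invokes by citing Kato, pp.~99--102 — is the transformation function: solve the linear analytic ODE
\begin{equation}
T'(\lambda)=\bigl[\Pi_S'(\lambda),\Pi_S(\lambda)\bigr]\,T(\lambda),
\qquad T(\lambda_0)=I .
\end{equation}
Solutions of a linear ODE with analytic coefficients continue analytically along every path in the domain, and on a simply connected domain the monodromy theorem (now legitimately applied, to this solution) makes $T$ single-valued; moreover $T(\lambda)$ is invertible and satisfies $T(\lambda)\Pi_S(\lambda_0)T(\lambda)^{-1}=\Pi_S(\lambda)$, so $v_j(\lambda):=T(\lambda)e_j$ gives globally defined analytic bases of $S_+(\lambda)$ and $U_+(\lambda)$. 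So your overall strategy (analytic projections plus simple connectivity) is the right one and matches the paper's, but you are missing the mechanism that actually transports the basis globally; without it the step from local frames to a global one does not follow.
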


\begin{proof}
By spectral separation of $U_+$, $S_+$, the associated
(group) eigenprojections are analytic. The existence of analytic
bases then follows by a standard result of Kato; see \cite{Kat}, pp.
99--102.
\end{proof}

\begin{cor}\label{stablecor}
By the Conjugation Lemma  , on the domain of consistent splitting,
the stable manifold of solutions decaying as $x\to +\infty$ of \eqref{gen} is
\begin{equation} \label{span}
\CalS^+:=\Span\{P_+v_1^+,\dots,  P_+ v_k^+\},
\end{equation}
where $W_+^j:=P_+v_j^+$ are analytic in $\lambda$ and $C^{K+1}$ in $x$
for $\mathbb{A}\in C^K$.
\end{cor}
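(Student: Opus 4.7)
The plan is to derive the corollary directly from the Conjugation Lemma combined with Lemma \ref{bases}, since once one has both the conjugator $P_+$ and an analytic basis of $S_+$, the statement is essentially a bookkeeping exercise: conjugation carries the stable manifold of the constant-coefficient limit onto that of the variable-coefficient problem.

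First, I would apply the Conjugation Lemma to rewrite any solution of \eqref{gen} on $x \ge 0$ as $W = P_+(x,\lambda) Z$, where $Z$ solves the asymptotic constant-coefficient equation \eqref{limgen}. Since $\lambda$ lies in the domain of consistent splitting, $\mathbb{A}_+$ is hyperbolic with stable subspace $S_+$ of dimension $k$, so the space of decaying solutions of $Z' = \mathbb{A}_+ Z$ is exactly $\{e^{\mathbb{A}_+ x} v : v \in S_+\}$, a $k$-dimensional family. By Lemma \ref{bases}, $S_+$ admits an analytic basis $v_1^+,\dots,v_k^+$, giving an analytic basis of decaying $Z$-solutions which I denote (following the notation of the corollary) by $v_j^+$ as well.

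Next, I would transport this basis through the conjugation to obtain $W_+^j := P_+ v_j^+$. Because estimate \eqref{Theta} gives $|\Theta_+| \le C_1 C_2 e^{-\theta x/C_2}$ uniformly for $x \ge 0$, both $P_+$ and $P_+^{-1} = (I + \Theta_+)^{-1}$ are uniformly bounded on the half-line. Hence $W = P_+ Z$ decays as $x \to +\infty$ if and only if $Z$ does, which identifies $\Span\{W_+^1,\dots,W_+^k\}$ with the full stable manifold $\CalS^+$. Injectivity of the correspondence follows from invertibility of $P_+$, so dimensions match and no spurious solutions are introduced.

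Finally, regularity: analyticity of $W_+^j$ in $\lambda$ follows from the joint analyticity of $P_+$ (Conjugation Lemma) and of $v_j^+$ (Lemma \ref{bases}), since the product of analytic maps into $L^\infty(x)$ is analytic. The $C^{K+1}$ regularity in $x$ follows from the bound \eqref{Theta} on $(\partial/\partial x)^k \Theta_+$ up to order $k = K+1$, combined with smoothness of the constant-coefficient solutions. There is no real obstacle here; the only point requiring care is the equivalence of decay for $W$ and $Z$, which is immediate from the uniform two-sided bound on $P_+^{\pm 1}$.
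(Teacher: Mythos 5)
Your argument is correct and is exactly the intended (implicit) proof in the paper: the corollary is stated as an immediate consequence of the Conjugation Lemma together with the analytic bases of $S_+$ from Lemma \ref{bases}, with the conjugator $P_+=I+\Theta_+$ and its inverse uniformly bounded so that decay of $W$ and of $Z$ are equivalent. Your added care about the two-sided bound on $P_+^{\pm 1}$ and the regularity bookkeeping via \eqref{Theta} matches the paper's reasoning; no difference in approach.
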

\subsection{Definition of the Evans Function}
On any simply connected subset of the domain of consistent
splitting, let $W_1^+, \dots, W_k^+=P_+v_1^+, \dots, P_+v_k^+$
be the
analytic basis described in Corollary \ref{stablecor}
of the subspace $\CalS^+$ of solutions $W$ of \eqref{gen} satisfying
the boundary condition $W\to 0$ at $+\infty$.
 Then, the {\it Evans function} for the ODE
systems $W'=\mathbb{A}(x,\lambda)W$ associated with this choice of
limiting bases is defined as the $k\times k$ Gramian determinant
\begin{equation}\label{deq1}
\begin{aligned}
D(\lambda)&:=
\det \Big( \mathbb{B} W_{1}^+, \dots, \mathbb{B}W_k^+
\Big)_{|x=0, \lambda}\\
&=\det \Big( \mathbb{B}P_+ v_{1}^+, \dots, \mathbb{B}P_+ v_k^+
\Big)_{|x=0, \lambda}.\\
\end{aligned}
\end{equation}
\medskip

\begin{rem}
Note that $D$ is independent of the choice of $P_{+}$ as,
by uniqueness of stable manifolds, the exterior
products (minors) $P_+ v_{1}^+\wedge \dots\wedge P_+ v_k^+$
are uniquely determined by their behavior as $x\to + \infty$.
\end{rem}

\begin{prop}\label{2.4}
Both the Evans function and the subspace $\CalS^+$
are analytic on the entire simply connected subset of the domain of
consistent splitting on which they are defined. Moreover, for
$\lambda$ within this region, equation \eqref{gen} admits a nontrivial
solution $W\in L^2(x>0)$ if and only if $D(\lambda)=0$.
\end{prop}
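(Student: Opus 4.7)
My plan is to treat the two assertions — analyticity, and the zero-of-$D$ characterization — separately, both reducing to the Conjugation Lemma together with Lemma \ref{bases}.

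For analyticity, I would observe that the basis sections $W_j^+ = P_+ v_j^+$ are the composition of the analytic conjugator $P_+(\cdot,\lambda)\in L^\infty[0,\infty)$ supplied by the Conjugation Lemma with the analytic limiting basis $\{v_j^+\}$ of $S_+$ supplied by Lemma \ref{bases}, hence are analytic in $\lambda$ as $L^\infty$-valued functions. Evaluation at $x=0$ is a bounded linear functional, and the Gramian determinant in \eqref{deq1} is a polynomial in its columns, so $D(\lambda)$ is analytic on the simply connected region. Analyticity of $\CalS^+$ as a subbundle is simply the existence of this analytic frame.

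For the $L^2$ characterization, the central observation is that on the domain of consistent splitting $\BbbA_+$ is hyperbolic, so every nonzero solution of the limiting equation \eqref{limgen} either decays or grows at a strictly exponential rate, with no intermediate behavior. I would use the factorization $W=(I+\Theta_+)Z$, with $(I+\Theta_+)$ and its inverse uniformly bounded by part (i) of the Conjugation Lemma and $\Theta_+$ decaying exponentially by \eqref{Theta}, to transfer the dichotomy to \eqref{gen}: a solution $W$ lies in $L^2(x>0)$ iff the associated $Z$ decays at $+\infty$, iff $Z(0)\in S_+$, iff $W\in\CalS^+$. Hence a nontrivial $L^2$ solution of \eqref{gen} satisfying the boundary condition $\mathbb{B}W|_{x=0}=0$ exists iff there exist coefficients $c_1,\dots,c_k$, not all zero, with $\sum_j c_j\,\mathbb{B}W_j^+|_{x=0}=0$. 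Because the consistent-splitting assumption forces $\mathrm{rank}\,\mathbb{B}=k=\dim S_+$, this is a square $k\times k$ linear system, and its singularity is precisely the vanishing of \eqref{deq1}.

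The step I expect to demand the most care is the equivalence between $L^2(x>0)$ membership and being in $\CalS^+$: to rule out solutions that are merely bounded, algebraically decaying, or oscillating, I must use the full strength of consistent splitting (no center subspace of $\BbbA_+$) in conjunction with the uniform bounds and exponential decay of $\Theta_+$. Once that dichotomy is nailed down, the rest is bookkeeping with the analytic frame and a determinantal restatement of linear dependence.
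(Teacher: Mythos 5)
Your argument is correct and follows essentially the same route as the paper's own proof: analyticity from the analytic conjugators $P_+$ and the analytic limiting bases $v_j^+$ of Lemma \ref{bases}, and the zero-of-$D$ characterization from identifying the $L^2$ solutions of \eqref{gen} with the stable manifold $\CalS^+=\Span\{P_+v_j^+\}$ (via the Conjugation Lemma and hyperbolicity of $\BbbA_+$) and then reading off nontrivial intersection with $\ker\mathbb{B}$ as vanishing of the $k\times k$ determinant \eqref{deq1}. The paper compresses the exponential-dichotomy step you spell out, but the substance is identical.
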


\begin{proof}
Analyticity follows by uniqueness, and local analyticity
of $P_{+}$, $v_k^{+}$. Noting that the first $P_+v_j^+$
are a basis for the stable manifold of \eqref{gen} at $x\to +\infty$,
we find that the determinant of $\mathbb{B}P_+v_j^+$
vanishes if and only if
$\mathbb{B}(\lambda)$ has nontrivial kernel on $\CalS_+(\lambda,0)$,
whence the second assertion follows.
\end{proof}

\begin{rem}
In this case that the ODE system  describes an eigenvalue equation
associated with an ordinary differential operator $L$, Proposition
\ref{2.4} implies that eigenvalues of $L$ agree in location with zeroes
of $D$.
(Indeed, they agree also in multiplicity; see \cite{GJ1,GJ2};
Lemma 6.1, \cite{ZH}; or Proposition 6.15 of \cite{MZ3}.)
\end{rem}

When $\ker \mathbb{B}$ has an analytic basis
$v^0_{k+1}, \dots, v^0_{N-k}$, for example, in the commonly
occurring case, as here, that $\mathbb{B}\equiv \const$, we have
the following useful alternative formulation.
This is the version that we will use in our analysis of the Green
function and Resolvent kernel.

\begin{prop}\label{evansprop}
Let $v^0_{k+1}, \dots, v^0_{N-k}$
be an analytic basis of $\ker \mathbb{B}$, normalized
so that $\det \big( \mathbb{B}^*, v^0_{k+1}, \dots v^0_{N}\big) \equiv 1$.
Then, the solutions $W^0_j$ of \eqref{gen} determined by initial data
$W^0_j(\lambda,0)=v^0_j$ are analytic in $\lambda$ and $C^{K+1}$
in $x$, and
\begin{equation}\label{deq}
D(\lambda):=
\det \Big( W_{1}^+, \dots,  W_k^+,
W_{k+1}^0, \dots, W_{N}^0 \Big)_{|x=0, \lambda}.
\end{equation}
\end{prop}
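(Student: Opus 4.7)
The proposition asserts two things: regularity of the solutions $W^0_j$, and that the $N \times N$ determinant in \eqref{deq} reproduces the Evans function of \eqref{deq1}. The plan is to dispatch the first by standard ODE theory with analytic parameters and the second by a Laplace-expansion argument that uses the prescribed normalization to convert an $N\times N$ Wronskian into the $k\times k$ ``boundary-trace'' determinant.

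For regularity, I would invoke the fact that $\mathbb{A}(\cdot,\lambda)$ is analytic in $\lambda$ as a map into $L^\infty(x)$ and $C^K$ in $x$, while by hypothesis $v^0_j(\lambda)$ is an analytic basis of $\ker\mathbb{B}$. Picard iteration for the initial-value problem $(W^0_j)' = \mathbb{A} W^0_j$, $W^0_j(\lambda,0) = v^0_j(\lambda)$, preserves analyticity in $\lambda$ at each step, so the limit is analytic. Bootstrapping from the equation itself, $W^0_j$ starts continuous, the ODE upgrades it to $C^1$, and $K$ further applications using the $C^K$ regularity of $\mathbb{A}$ yield $W^0_j\in C^{K+1}$ in $x$.

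For the identification of determinants, set $T(\lambda) := \bigl(\mathbb{B}^*, v^0_{k+1}, \dots, v^0_N\bigr)$; by hypothesis $\det T \equiv 1$, hence $\det T^{-1}\equiv 1$. Writing
\begin{equation*}
T^{-1} = \begin{pmatrix} Q_1 \\ Q_2 \end{pmatrix},
\end{equation*}
with $Q_1$ of size $k\times N$ and $Q_2$ of size $(N-k)\times N$, the identity $T^{-1}T = I_N$ forces $Q_1 \mathbb{B}^* = I_k$, $Q_1 v^0_j = 0$ for $j>k$, $Q_2 \mathbb{B}^* = 0$, and $Q_2 v^0_j = e_{j-k}$. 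Multiplying $M^+ := \bigl(W^+_1, \dots, W^+_k, W^0_{k+1}, \dots, W^0_N\bigr)\big|_{x=0}$ on the left by $T^{-1}$ collapses the last $N-k$ columns to $e_{k+1},\dots,e_N$, producing the block-lower-triangular structure
\begin{equation*}
T^{-1} M^+ = \begin{pmatrix} Q_1 W^+ & 0 \\ Q_2 W^+ & I_{N-k} \end{pmatrix}, \qquad W^+ := \bigl(W^+_1,\dots,W^+_k\bigr)\big|_{x=0}.
\end{equation*}
Laplace expansion then gives $\det M^+ = \det(Q_1 W^+)$. In the Dirichlet setting of the paper, $\mathbb{B}$ is essentially the projection $(I_n,0)$, so $\mathbb{B}\mathbb{B}^* = I_k$; thus $Q_1 = \mathbb{B}$ solves the characterizing relations, and $\det M^+ = \det(\mathbb{B} W^+) = D(\lambda)$, reproducing \eqref{deq1}.

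The one delicate point, which I expect to require the most careful articulation, is the interplay between the $\mathbb{B}^*$ appearing in the normalization and the $\mathbb{B}$ of the boundary condition: for a general boundary matrix, the identification $Q_1 = \mathbb{B}$ requires in addition $\mathbb{B}\mathbb{B}^* = I_k$; absent this, the two determinants agree only up to the nonvanishing analytic prefactor $\det\bigl((\mathbb{B}\mathbb{B}^*)^{-1}\bigr)$, which preserves zero locations and multiplicities (hence all stability-theoretic content) but not literal equality. Under the paper's Dirichlet framework this issue is invisible and the proposition stands as written; elsewhere one should interpret the normalization as implicitly assuming $\mathbb{B}\mathbb{B}^* = I_k$, or read \eqref{deq} as defining an Evans function in the equivalence class of \eqref{deq1}.
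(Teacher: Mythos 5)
Your proof is correct and is essentially a detailed fleshing-out of the paper's own (very terse) argument: analyticity/smoothness by standard dependence on parameters and initial data, and the determinant identity by exploiting the normalization $\det\big(\mathbb{B}^*, v^0_{k+1},\dots,v^0_N\big)\equiv 1$ together with the fact that the $W^0_j$ restrict to $v^0_j$ at $x=0$. Your closing caveat is also apt: literal equality with \eqref{deq1} uses $\mathbb{B}\mathbb{B}^*=I_k$, which holds in the paper's setting where $\mathbb{B}=(I_n,0)$, and otherwise the two definitions differ only by the nonvanishing factor $\det\big((\mathbb{B}\mathbb{B}^*)^{-1}\big)$, leaving zeros and multiplicities unchanged.
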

\begin{proof}
Analyticity/smoothness follow by analytic/smooth dependence on
initial data/parameters.
By the chosen normalization, and standard properties of Grammian determinants,
$D(\lambda)=
\det \Big( W_{1}^+, \dots,  W_k^+,
v_{k+1}^0, \dots, v_{N}^0 \Big)_{|x=0, \lambda}$,
yielding \eqref{deq}.
\end{proof}
\section{Construction of the Resolvent kernel}
In this section we construct the explicit form of the resolvent
kernel, which is nothing more than the Green function
$G_\lambda(x,y)$ associated with the elliptic operator $(L-\lambda
I)$, where
\begin{equation}\label{Gdef}
(L-\lambda I)G_\lambda(.,y)=\delta_yI,
\quad
G_\lambda(0,y)\equiv 0.
\end{equation}
Let $\Lambda$ be the region of consistent splitting for $L$.
It is an established fact (see [He]) that the resolvent $(L-\lambda I)^{-1}$ and
the Green function $G_\lambda(x,y)$ are meromorphic in $\lambda$
on $\Lambda$, with isolated poles of finite order.
$G_\lambda$ in fact admits a meromorphic extension to a sector
\begin{equation}\label{sector}
\Omega_\theta = \{\lambda : Re (\lambda) \geq -\theta_1 - \theta_2 |Im
(\lambda)|\}, \quad \theta_1,\theta_2 >0.
\end{equation}

Writing the associated eigenvalue equation in
the form of a first-order system \eqref{gen},
we obtain
\begin{equation}
W'={\mathbb{A}}(\lambda, x)W,
\quad \mathbb{B}W(0)=0,
\end{equation}
where
$$
W=
\begin{pmatrix} w\\ w' \end{pmatrix} \in \CC^{2n},
\qquad
\mathbb{A}=
\begin{pmatrix}
0 && I\\
\lambda B^{-1}+A'B^{-1} &&  AB^{-1}-B'B^{-1}
\end{pmatrix}
$$
and $\mathbb{B}\equiv \const$ is the rank-$n$ projection onto the first
coordinate $w$ of $W$, with kernel spanned by the constant basis
$v^0_{n+j}= e_{n+j}$, $j=1, \dots, n$ and $e_j$ the $j$th standard
basis element.

Denote by
\begin{equation}
\Phi^0=
\begin{pmatrix}
\phi^0_1(x;\lambda) & \cdots &
          \phi^0_{n}(x;\lambda)
\end{pmatrix}=
\begin{pmatrix}
W^0_1 & \cdots & W^0_{n}
\end{pmatrix}
\end{equation}
and
\begin{equation} \label{phi+}
\Phi^+=
\begin{pmatrix}
\phi^+_1(x;\lambda) & \cdots &
          \phi^+_{n}(x;\lambda)
\end{pmatrix}=
\begin{pmatrix}
W^+_{n+1} & \cdots & W^+_{2n}
\end{pmatrix}=\begin{pmatrix} P_+v_1^+ & \cdots & P_+v_k^+
\end{pmatrix}
\end{equation}
the matrices whose columns span the subspaces of
solutions of \eqref{gen} decaying at $x=0,+\infty$
respectively, denoting (analytically chosen) complementary subspaces by
\begin{equation}
\Psi^0=
\begin{pmatrix}
\psi^0_1(x;\lambda) & \cdots &
          \psi^0_{n}(x;\lambda)
\end{pmatrix}=
\begin{pmatrix}
W^0_{n+1} & \cdots & W^0_{2n}
\end{pmatrix}
\end{equation}
and \\
\begin{equation}
\Psi^+=
\begin{pmatrix}
\psi^+_1(x;\lambda) & \cdots &
          \psi^+_{n}(x;\lambda)
\end{pmatrix}=
\begin{pmatrix}
W^+_{1} & \cdots & W^+_{n}
\end{pmatrix}.
\end{equation}
As described in the previous subsection,
eigenfunctions decaying at both $0,+\infty$ occur precisely when
the subspaces $\Span \Phi^0$ and $\Span \Phi^+$ intersect,
i.e., at zeros of the Evans function defined in \eqref{deq}:
\begin{equation}
D_L(\lambda):=\det(\Phi^0,\Phi^+)_{|x=0}=\begin{pmatrix} \phi^0_1
\wedge \cdots \wedge \phi^0_n \wedge \phi^+_1 \wedge \cdots \wedge
\phi^+_n\end{pmatrix}_{|x=0}.
\end{equation}

\begin{lem}[\cite{GZ,ZH}]
For $\theta_1,\theta_2 >0$ sufficiently small, $D_L$ is locally
analytic on sector $\Omega_\theta$ as defined in \eqref{sector}.
\end{lem}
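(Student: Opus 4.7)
The plan is to show that each of the factors $\Phi^0$ and $\Phi^+$ appearing in $D_L(\lambda)=\det(\Phi^0,\Phi^+)_{|x=0}$ admits an analytic extension in $\lambda$ to a sector of the form \eqref{sector}, after which analyticity of the determinant is automatic. The matrix $\Phi^0$ is the harder-looking piece at first glance only: since $\mathbb{B}\equiv\const$ with constant kernel basis $v^0_{n+j}=e_{n+j}$, Proposition \ref{evansprop} says that the columns of $\Phi^0$ are simply the solutions of $W'=\mathbb{A}(\lambda,x)W$ with constant initial data at $x=0$, so they are entire in $\lambda$ and $C^{K+1}$ in $x$ by standard analytic dependence on parameters. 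All of the work is in $\Phi^+$.

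To continue $\Phi^+$, I would first analyze the spectrum of the limiting matrix $\mathbb{A}_+(\lambda)$, which controls the behavior of solutions as $x\to+\infty$. The eigenvalues $\mu$ of $\mathbb{A}_+(\lambda)$ satisfy the dispersion relation $\det(\lambda + ik A_+ + k^2 B(u_+))=0$ with $\mu=ik$, so hypothesis (H3) guarantees that for real $k\ne 0$ the corresponding $\lambda$ lies strictly to the left of the parabola $\mathrm{Re}\,\lambda\le -\theta k^2$. By continuity of the spectrum under analytic perturbation, this bound extends to show that for $\lambda\in\Omega_\theta$ with $\theta_1,\theta_2$ sufficiently small, no eigenvalue $\mu$ of $\mathbb{A}_+(\lambda)$ has $\mathrm{Re}\,\mu=0$: the $n$ eigenvalues that were stable on the unperturbed region of consistent splitting remain in a spectrally isolated group with $\mathrm{Re}\,\mu\le -\eta<0$ up to a small slow-decay tolerance, and likewise for the unstable group. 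This is the main obstacle — careful bookkeeping of where the essential-spectrum curves $\lambda(ik)$ of $\mathbb{A}_+$ sit relative to $\Omega_\theta$ — but it reduces to a perturbative eigenvalue calculation made possible by (H2) and (H3).

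Once the spectral splitting of $\mathbb{A}_+(\lambda)$ is established on $\Omega_\theta$, Lemma \ref{bases} (Kato's analytic basis construction) produces analytic vectors $v_1^+(\lambda),\dots,v_n^+(\lambda)$ spanning the stable subspace $S_+(\lambda)$, and the Conjugation Lemma yields a transformation $P_+(x,\lambda)=I+\Theta_+(x,\lambda)$ analytic in $\lambda$ and satisfying \eqref{Theta}, whose action on constant-coefficient solutions produces $\phi_j^+=P_+v_j^+$. These are analytic in $\lambda$ on $\Omega_\theta$ and $C^{K+1}$ in $x$ by Corollary \ref{stablecor}. Since analyticity is preserved under determinants, the Evans function
\[
D_L(\lambda)=\det\big(\phi_1^0,\dots,\phi_n^0,\phi_1^+,\dots,\phi_n^+\big)\big|_{x=0}
\]
is analytic on $\Omega_\theta$, and by shrinking $\theta_1,\theta_2$ if necessary the construction is local about any preassigned $\lambda_0$ in the region, completing the proof.
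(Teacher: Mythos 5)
Your reduction to the analyticity of $\Phi^+$ is the right frame, and the treatment of $\Phi^0$ (constant data at $x=0$, hence entire in $\lambda$) is fine; but the pivotal spectral claim on which your continuation of $\Phi^+$ rests is false. You assert that for $\theta_1,\theta_2$ small no eigenvalue $\mu$ of $\mathbb{A}_+(\lambda)$ is purely imaginary for $\lambda\in\Omega_\theta$, i.e.\ that the whole sector lies in the domain of consistent splitting. This fails at and near $\lambda=0$, which is an interior point of $\Omega_\theta$ (indeed $B(0,\theta_1)\subset\Omega_\theta$): the characteristic equation is $\det(\mu^2B_+-\mu A_+-\lambda I)=0$, and at $\lambda=0$ it has the root $\mu=0$ with multiplicity $n$ (the slow roots $\mu^+_{r+j}(\lambda)=-\lambda/a^+_j+\CalO(\lambda^2)$ of \eqref{mubound} all vanish), so $\mathbb{A}_+(0)$ has an $n$-dimensional center subspace. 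More generally, the essential-spectrum curves $\lambda_j(k)\in\sigma(-ikA_+-k^2B_+)$, on which $\mathbb{A}_+(\lambda)$ has the purely imaginary eigenvalue $\mu=ik$, satisfy $\lambda_j(k)=-ia^+_jk-\beta^+_jk^2+\CalO(k^3)$ and are therefore tangent to the imaginary axis at the origin; since $\mathrm{Re}\,\lambda_j\approx-(\beta^+_j/(a^+_j)^2)(\mathrm{Im}\,\lambda_j)^2$ is only quadratically small, these curves enter $\{\mathrm{Re}\,\lambda\ge-\theta_1-\theta_2|\mathrm{Im}\,\lambda|\}$ for $k$ small, no matter how small $\theta_1,\theta_2$ are. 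So (H3) keeps the essential spectrum out of $\Omega_\theta\setminus B(0,r)$ for each fixed $r>0$, but never out of all of $\Omega_\theta$, and your hedge ``up to a small slow-decay tolerance'' cannot repair this: inside the sector near the origin some of the continued slow modes genuinely cease to decay, so no appeal to hyperbolicity/Kato on all of $\Omega_\theta$ is available. As written, your argument proves analyticity only on $\Omega_\theta$ minus a neighborhood of the origin, which is exactly the regime that is not at issue.

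The missing ingredient---and the real content of the lemma, which is why it carries the citation \cite{GZ,ZH}---is the analytic continuation of the Evans function \emph{across} the essential spectrum boundary near $\lambda=0$. This is what the paper's proof does: a direct calculation shows that $\Omega_\theta\setminus B(0,r)$ lies in the domain $\Lambda$ of consistent splitting for $\theta$ small (there your Kato-plus-Conjugation-Lemma argument, as in Corollary \ref{stablecor}, is correct), while on $B(0,r)$ one uses that, by (H2), the $2n$ roots remain analytically separated into $n$ fast roots with $\mathrm{Re}\,\mu$ bounded away from zero and $n$ distinct slow roots $-\lambda/a^+_j+\CalO(\lambda^2)$; hence the limiting bases $v^+_j(\lambda)$ chosen on the region of consistent splitting extend analytically to $B(0,r)$ even though the corresponding modes need not decay there, and the Conjugation Lemma---which requires only the exponential convergence \eqref{h0}, not a spectral gap---transfers this extension to $\phi^+_j=P_+v^+_j$ and hence to $D_L$. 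If you replace your hyperbolicity claim by this two-region argument (consistent splitting away from the origin, analytic continuation of the separated slow and fast modes inside $B(0,r)$), your outline becomes the paper's proof.
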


\begin{proof} Direct calculation showing that the domain $\Lambda$
of consistent splitting is contained in $\Omega_\theta-B(0,r)$
for $r>0$ arbitrary and $\theta$ sufficiently small, with $v_j^+$
extending analytically to $B(0,r)$.
\end{proof}

\begin{lem}\label{greendual}
Let $H_\lambda(x,y)$ denote the Green function for the adjoint
operator $(L-\lambda I)^*$ on the half-plane $x\ge 0$.
Then $G_\lambda(x,y)=H^*_\lambda(x,y)$.
In particular, for $x\neq y$, the matrix $z=G_\lambda(x,.)$
satisfies
\begin{equation}\label{Zeq}
(z'B)'=-z'A+z\lambda.
\end{equation}
\end{lem}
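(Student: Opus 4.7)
The plan is to derive both assertions from the self-duality of the Green function under the formal adjoint, which in turn reduces to integration by parts once one is careful with the Dirichlet boundary condition at $x=0$.

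First I would compute the formal adjoint of $L$ on $x\ge 0$ with Dirichlet data. Pairing $\langle Lu, v\rangle_{L^2(0,\infty)}$ and integrating by parts twice, the boundary term produced by the convective piece $-(Au)_x$ is killed by $u(0)=0$, while the two boundary terms produced by the viscous piece $(Bu_x)_x$ require $v(0)=0$ in order to vanish (since $u_x(0)$ is not constrained by Dirichlet data on $u$). One thus reads off
\[
L^{*} v \;=\; A^{*} v_x + (B^{*} v_x)_x,
\]
with the same Dirichlet condition $v(0)=0$. Consequently the adjoint Green function $H_\lambda$ is well-posed by $(L-\lambda I)^{*}H_\lambda(\cdot,y)=\delta_y I$, $H_\lambda(0,y)\equiv 0$.

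To establish $G_\lambda(x,y)=H^{*}_\lambda(x,y)$, I would take arbitrary test densities $\phi,\psi\in C_c^\infty(0,\infty)$, set $u:=(L-\lambda)^{-1}\phi$ and $v:=(L-\lambda)^{-*}\psi$, and apply the boundary-free identity $\langle (L-\lambda)u, v\rangle=\langle u,(L-\lambda)^{*}v\rangle$ derived above (the supports of $\phi,\psi$ in $(0,\infty)$ together with the Dirichlet condition on $u,v$ kill every boundary contribution, and the exponential decay of the layer profile as $x\to\infty$ takes care of the upper endpoint). Inserting the integral representations $u=\int G_\lambda\phi$ and $v=\int H_\lambda\psi$ and matching kernels yields the claimed identity, with the standard convention that ``$*$'' on an integral kernel combines matrix conjugate-transpose with swap of the two spatial arguments.

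Finally, for the pointwise ODE, fix $x$ and set $z(y):=G_\lambda(x,y)$ for $y\ne x$. By the kernel duality of the previous paragraph, $z(y)^{*}$ equals $H_\lambda(y,x)$, which as a function of its first argument satisfies the homogeneous adjoint equation $(L-\lambda I)^{*}H_\lambda(\cdot,x)=0$ on $y\ne x$, i.e.\
\[
A^{*}(H_\lambda)_y+(B^{*}(H_\lambda)_y)_y-\bar\lambda\, H_\lambda=0.
\]
Taking conjugate-transposes of this identity in $y$ (which commutes with $\partial_y$ since $y$ is real) converts it term by term into $z_y A+(z_y B)_y-\lambda z=0$, which is exactly $(z'B)'=-z'A+z\lambda$. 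The only real obstacle is the bookkeeping: one must track carefully whether ``$*$'' denotes transpose or conjugate-transpose and whether $\lambda$ survives or is replaced by $\bar\lambda$ at each step, so that the arrows in ``$G_\lambda=H^{*}_\lambda$'' are compatible with the equation one wants to recover. Once those conventions are fixed, everything else is a direct integration-by-parts computation.
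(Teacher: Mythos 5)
Your argument is correct and is exactly the ``standard duality argument'' that the paper invokes (it gives no details, simply citing the whole-line case in \cite{ZH}): computing the formal adjoint $L^*v=A^*v_x+(B^*v_x)_x$ with Dirichlet condition at $x=0$, identifying $G_\lambda$ with the adjoint kernel by integration by parts against test functions, and then transposing the homogeneous adjoint equation to get \eqref{Zeq}. Your care with the boundary terms at $x=0$ and with the transpose/conjugate and $\lambda$ versus $\bar\lambda$ bookkeeping is precisely what makes the cited argument go through on the half-line.
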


\begin{proof}
Standard duality argument; see \cite{ZH} for operators on the whole line.
\end{proof}

Considering \eqref{Zeq} as an ODE system for the vector $Z=(z,z')$, it
becomes
\begin{equation}
Z'=Z\tilde{\mathbb{A}}(\lambda, x),
\end{equation}
where
\begin{equation}\label{firstorderZ}
\tilde{\mathbb{A}}=
\begin{pmatrix}
0 && \lambda B^{-1}-A'B^{-1} \\
I && -AB^{-1}-B'B^{-1}
\end{pmatrix}.
\end{equation}

\begin{lem}[\cite{ZH}]\label{duality}
$Z$ is a solution of \eqref{firstorderZ} if and only if
$Z \CalS W \equiv$ constant
for any solution W of \eqref{gen},
where $\CalS = \begin{pmatrix} -A & B \\
-B & 0 \end{pmatrix}.$
\end{lem}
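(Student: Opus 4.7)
The plan is to reduce the biconditional to a single row-vector matrix identity via one application of the product rule, and then verify that identity either by direct block-matrix computation or by reading it off from a double integration by parts.

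First, I would differentiate $Z\CalS W$ along $x$ and use $W'=\mathbb{A}W$ to obtain
\[
\frac{d}{dx}(Z\CalS W)=\bigl(Z'\CalS+Z\CalS'+Z\CalS\mathbb{A}\bigr)W.
\]
Since solutions of \eqref{gen} span all of $\CC^{2n}$ at each fixed $x$ (take any fundamental matrix), the right-hand side vanishes for every solution $W$ if and only if the row vector $Z'\CalS+Z\CalS'+Z\CalS\mathbb{A}$ is identically zero. This already reduces the ``for any solution $W$'' quantifier to a pointwise algebraic condition on $Z$.

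Next, I would note that $\CalS$ is invertible: swapping the two block rows gives $\det\CalS=(\det B)^2$, which is nonzero by (H1). Hence the vanishing condition rearranges to $Z'=-Z(\CalS'+\CalS\mathbb{A})\CalS^{-1}$, and it remains to verify the algebraic identity
\[
-(\CalS'+\CalS\mathbb{A})\CalS^{-1}=\tilde{\mathbb{A}}.
\]
This is a direct block-matrix calculation from the formulas for $\mathbb{A}$, $\tilde{\mathbb{A}}$, and $\CalS$. A cleaner conceptual route, which I would actually present, is to integrate by parts twice in $\int_a^b z\bigl[(Bw')'-(Aw)'-\lambda w\bigr]\,dx$: one obtains the boundary expression $[-zAw+zBw'-z'Bw]_a^b$ plus the interior term $\int_a^b\bigl[(z'B)'+z'A-z\lambda\bigr]w\,dx$. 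The boundary piece equals $Z\CalS W$ by inspection of the coefficients of $zw$, $zw'$, $z'w$, and $z'w'$, while the vanishing of the interior integrand against arbitrary $w$ is precisely \eqref{Zeq}, equivalently \eqref{firstorderZ}. Both directions of the biconditional then follow immediately.

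The only real obstacle is keeping careful track of the $A'$ and $B'$ terms that show up when the second-order operators are recast as first-order systems: they are exactly what populate the off-diagonal entries of $\mathbb{A}$ and $\tilde{\mathbb{A}}$, and a sloppy product-rule step will misplace them. The integration-by-parts viewpoint absorbs these derivatives automatically, so I would prefer it over a brute-force block computation; the block verification can then be relegated to a single remark, confirming that the interior integrand identity is indeed $-(\CalS'+\CalS\mathbb{A})\CalS^{-1}=\tilde{\mathbb{A}}$.
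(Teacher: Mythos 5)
Your proposal is correct and follows essentially the same route as the paper, whose proof is precisely the direct computation of $(Z\CalS W)'$ and comparison with $0$ (citing \cite{ZH}); your integration-by-parts formulation is just a tidy repackaging of that same computation. The only addition beyond the paper's one-line argument is your explicit handling of the converse via the pointwise spanning of solutions $W$ and invertibility of $\CalS$, which the paper leaves implicit and which is a sound refinement.
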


\begin{proof}
Direct computation/comparison with $0$ of $(Z \CalS W)'$;
see \cite{ZH}.
\end{proof}

Using Lemma \ref{duality}, we can define dual bases $\tilde{W}^0_j$ and
$\tilde{W}^+_j$ by the relations
\begin{equation}
\tilde{W}^{0,+}_j\CalS W^{0,+}_k=\delta^j_k.
\end{equation}

Likewise, $\tilde{\mathbb{A}}_{0,+}$ can be defined as
\begin{equation}
\tilde{\mathbb{A}}_{0,+}=\begin{pmatrix} 0 & \lambda B_{0,+}^{-1}\\
I & -A_{0,+}B^{-1}_{0,+} \end{pmatrix}.
\end{equation}
We define also the dual subspaces
\begin{equation}
\tilde{\Phi}^0=
\begin{pmatrix}
\tilde{\phi}^0_1(x;\lambda) & \cdots &
          \tilde{\phi}^0_{n}(x;\lambda)
\end{pmatrix}=\begin{pmatrix}
\tilde{W}^0_{n+1} & \cdots & \tilde{W}^0_{2n}
\end{pmatrix},
\end{equation}
\begin{equation}\label{tildephi+}
\tilde{\Phi}^+=
\begin{pmatrix}
\tilde{\phi}^+_1(x;\lambda) & \cdots &
          \tilde{\phi}^+_{n}(x;\lambda)
\end{pmatrix}=\begin{pmatrix}
\tilde{W}^+_{1} & \cdots & \tilde{W}^+_{n}
\end{pmatrix},
\end{equation}

\begin{equation}
\tilde{\Psi}^0=
\begin{pmatrix}
\tilde{\psi}^0_1(x;\lambda) & \cdots &
          \tilde{\psi}^+_{n}(x;\lambda)
\end{pmatrix}=\begin{pmatrix}
\tilde W_1^0& \cdots&  \tilde W_n^0
\end{pmatrix},
\end{equation}

\begin{equation}\label{tildepsi+}
\tilde{\Psi}^+=
\begin{pmatrix}
\tilde{\psi}^+_1(x;\lambda) & \cdots &
          \tilde{\psi}^0_{n}(x;\lambda)
\end{pmatrix}=\begin{pmatrix}
 \tilde W_{n+1}^+& \cdots& \tilde W_{2n}^+
\end{pmatrix}.
\end{equation}

With these preparations, the construction of the Resolvent
kernel goes exactly as in the construction performed
in \cite{ZH,MZ3} on the whole line.

\begin{lem}\label{rep}
We have the the representation
\begin{equation}
\begin{pmatrix} G_\lambda & G_{{\lambda}_y} \\
                G_{{\lambda}_x} & G_{{\lambda}_{xy}}
\end{pmatrix}=
\begin{cases}
   \Phi^+(\lambda,x)M^+(\lambda)\tilde{\Psi}^0(\lambda,y)
                                  \quad & for \quad x>y,\\
   \Phi^0(\lambda,x)M^0(\lambda)\tilde{\Psi}^+(\lambda,y)
                                  \quad &for \quad x<y,\\
\end{cases}
\end{equation}
where $M^{0,+}$ are to be determined.
\end{lem}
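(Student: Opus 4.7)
The plan is to verify the representation directly, as in the whole-line constructions of \cite{ZH,MZ3}, with the Dirichlet boundary condition at $x = 0$ playing the role that decay as $x \to -\infty$ plays there. Concretely, I would check that the candidate block matrix satisfies the homogeneous first-order systems in $x$ and $y$ away from $x = y$, the decay/boundary conditions at the endpoints in both variables, and the prescribed jump across $x = y$ coming from the delta source; the matrices $M^{0,+}$ are then read off from the last condition.

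The first two checks are immediate from the construction. Viewed as a function of $x$ with $y$ fixed, each column of the candidate solves $W' = \mathbb{A}W$ on its respective region, since $(\Phi^{0,+})' = \mathbb{A}\Phi^{0,+}$. Viewed as a function of $y$ with $x$ fixed, each row solves $Z' = Z\tilde{\mathbb{A}}$ by construction of $\tilde\Psi^{0,+}$; by Lemma \ref{greendual} and the rewriting \eqref{firstorderZ}, this is exactly the $y$-equation satisfied by the block matrix $\mathcal{G}$ formed from $G_\lambda$ and its derivatives. Decay as $x \to +\infty$ (on $x > y$) is automatic from the decay of $\Phi^+$, while the Dirichlet condition $G_\lambda(0, y) = 0$ at $x = 0 < y$ follows from the normalization $W^0_j(0) = e_{n+j}$ of Proposition \ref{evansprop}, which forces the first $n$ rows of $\Phi^0(0)$ to vanish. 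The analogous conditions in $y$ are built into $\tilde\Psi^0$ (decay as $y \to +\infty$) and $\tilde\Psi^+$ (dual boundary condition at $y = 0$).

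The heart of the argument is the jump condition. Rewriting $(L - \lambda) G_\lambda(\cdot,y) = \delta_y I$ in first-order form shows that $\mathcal{G}$ must be continuous in $x$ across $x = y$ in its top $n$ rows while its bottom $n$ rows jump by $B^{-1}(y)$; compactly, this is a rank-$n$ jump equal to $-\CalS^{-1}(y)$ for the $\CalS$ of Lemma \ref{duality}. Imposing
\[
\Phi^+(y)\, M^+\, \tilde\Psi^0(y) \;-\; \Phi^0(y)\, M^0\, \tilde\Psi^+(y) \;=\; -\,\CalS^{-1}(y)
\]
and contracting on the left and right with the four families via $\CalS$, using the biorthogonality relations $\tilde W^{0,+}_j\,\CalS\,W^{0,+}_k = \delta^j_k$ of Lemma \ref{duality}, solves uniquely for $M^0$ and $M^+$ as analytic-in-$\lambda$ matrices on the region where $D_L(\lambda) \neq 0$, so that $\Span\Phi^0(y)\oplus\Span\Phi^+(y) = \CC^{2n}$ and the decomposition is well-defined.

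The hard part will be the careful bookkeeping of this block-form jump: since $G_\lambda$ itself is continuous while only $\partial_x G_\lambda$ jumps, the jump of $\mathcal{G}$ is rank-deficient, and one must check that the ansatz produces no spurious component along the growing modes (in $\Span\Psi^+$) or the boundary-violating modes (in $\Span\Psi^0$). This is exactly where the splittings $\CC^{2n} = \Span\Phi^+ \oplus \Span\Psi^+ = \Span\Phi^0 \oplus \Span\Psi^0$ together with the duality pairing $\CalS$ are used in an essential way. Once the matching is complete and $M^{0,\pm}$ identified, uniqueness of the Green function on the region $D_L(\lambda) \neq 0$ identifies the candidate with $\mathcal{G}$, completing the proof.
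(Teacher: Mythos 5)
The paper itself gives no argument here beyond citing Lemma 4.6 of \cite{ZH}, and your strategy --- factor the block matrix through solutions of the $x$-equation on either side of $x=y$ (decay at $+\infty$ via $\Phi^+$, Dirichlet at $x=0$ via $\Phi^0$), through solutions of the dual $y$-equation of Lemmas \ref{greendual}--\ref{duality}, and then fix the coefficient matrices by the transmission condition at $x=y$ --- is exactly that argument transplanted to the half line, so in outline you are on the intended route. However, you attach the wrong conditions to the dual families, and this is precisely the step that pins down the $y$-dependence, i.e.\ the substance of the lemma. In the representation for $x>y$ the variable $y$ ranges over $[0,x)$, so the factor $\tilde\Psi^0(y)$ must be characterized by the \emph{adjoint Dirichlet condition at $y=0$} --- equivalently, by annihilating $\Phi^0$ under the pairing $\CalS$ of Lemma \ref{duality}, which is what encodes $G_\lambda(x,0)=0$ --- while for $x<y$ the factor $\tilde\Psi^+(y)$ must be the family of dual solutions \emph{decaying as $y\to+\infty$} (those annihilating $\Phi^+$). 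You assert the opposite (``$\tilde\Psi^0$: decay as $y\to+\infty$; $\tilde\Psi^+$: dual boundary condition at $y=0$''). Carried out literally this verifies the wrong factorization, and it is false: already for $L=\partial_x^2$ on the half line one has, for $x>y$, $G_\lambda(x,y)\propto e^{-\sqrt\lambda\,x}\sinh(\sqrt\lambda\,y)$, whose $y$-factor vanishes at $y=0$ but grows in $y$; it is not the dual mode decaying at $+\infty$. (A related caveat: the paper's displayed index conventions in Section 3 literally make $\tilde\Psi^{0,+}$ the duals of $\Phi^{0,+}$; for the lemma and the subsequent corollary to be consistent with \cite{ZH}, the intended $\tilde\Psi^{0,+}$ are the duals of $\Psi^{0,+}$, which are exactly the families satisfying the two conditions just described.)

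Secondly, your bookkeeping of the jump is internally inconsistent: the jump across $x=y$ of the full $2n\times 2n$ matrix $\begin{pmatrix} G_\lambda & G_{\lambda_y}\\ G_{\lambda_x} & G_{\lambda_{xy}}\end{pmatrix}$ is (up to sign) $\CalS^{-1}(y)$, which is invertible, hence neither ``rank $n$'' nor ``rank-deficient''; only the left column block $(G_\lambda, G_{\lambda_x})^{tr}$, viewed as a function of $x$ alone, has the rank-deficient jump $(0,B^{-1}(y))^{tr}$. Once solvability for $M^{0,+}$ is wanted, one should impose the full $\CalS^{-1}$ jump on the $2n\times 2n$ block and invert using $\det(\Phi^0,\Phi^+)|_{x=0}=D_L(\lambda)\neq 0$; note, though, that for the lemma as stated ($M^{0,+}$ ``to be determined'') this step is not needed, so the essential content is exactly the $y$-characterization that you inverted. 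With those two corrections your proof matches the cited one.
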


\begin{proof}
See \cite{ZH} Lemma 4.6.
\end{proof}

Using Lemma \ref{rep}, we find the explicit coordinate-free
representation for $x>y$:
\begin{equation}
\begin{pmatrix} G_\lambda & G_{{\lambda}_y} \\
                G_{{\lambda}_x} & G_{{\lambda}_{xy}}
\end{pmatrix}=\CalF^{z\rightarrow x}
\Pi_+(z)\CalS^{-1}(z)\tilde{\Pi}_0(z)\tilde{\CalF}^{z\rightarrow y},
\end{equation}
where \\
\begin{equation}
\Pi_+(y)=(\Phi^+(y),0)(\Phi^+(y),\Phi^-(y))^{-1},
\end{equation}

\begin{equation}
\tilde{\Pi}_0(y)=\begin{pmatrix}\tilde{\Psi}^0(y)\\
\tilde{\Psi}^+(y)\end{pmatrix}^{-1}
\begin{pmatrix}\tilde{\Psi}^0(y)\\
0\end{pmatrix},
\end{equation}
\begin{equation}
\CalF^{z\rightarrow
x}=(\Phi^+(x),\Phi^0(x))(\Phi^+(z),\Phi^0(z))^{-1},
\end{equation}
\begin{equation}
\tilde{\CalF}^{z\rightarrow y}=\begin{pmatrix}\tilde{\Psi}^0(z)\\
\Phi^+(z)\end{pmatrix}\begin{pmatrix}\Psi^0(y)\\
\Psi^+(y)\end{pmatrix}^{-1},
\end{equation}
and similarly for $x<y$.
\begin{cor}
 The resolvent kernel may  be expressed as
 \begin{equation}
  G_\lambda(x,y)=
  \begin{cases}
   (I_n,0)\Phi^+(x;\lambda)M^+(\lambda)\tilde\Psi^{0*}(y;\lambda)(I_n,0)^{tr}\ &x>y,\\
   -(I_n,0)\Phi^0(x;\lambda)M^0(\lambda)\tilde\Psi^{+*}(y;\lambda)(I_n,0 )^{tr}\ &x<y,
                 \end{cases}
 \end{equation}
where
\begin{equation}
  M(\lambda):=\text{\rm diag}(M^+(\lambda),M^0(\lambda))=
  \Phi^{-1}(z;\lambda)\bar \CalS^{-1}(z)\tilde\Psi^{-1*}(z;\lambda).
\end{equation}
\end{cor}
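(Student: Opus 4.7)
The plan is to read the scalar Green function off the upper-left $n\times n$ block of the identity in Lemma~\ref{rep} and then to determine $M^\pm(\lambda)$ by imposing the defining properties of $G_\lambda$. Since $G_\lambda$ sits in the $(1,1)$-block of $\begin{pmatrix}G_\lambda & G_{\lambda_y}\\ G_{\lambda_x} & G_{\lambda_{xy}}\end{pmatrix}$, left- and right-multiplication by $(I_n,0)$ and $(I_n,0)^{tr}$ in Lemma~\ref{rep} immediately yields the two claimed expressions for $x>y$ and $x<y$; the minus sign in the $x<y$ case will emerge naturally from the jump analysis below, arranged so that $M(\lambda)$ is block diagonal.

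Next, I would pin down $M(\lambda)$ by imposing continuity of $G_\lambda$ together with the jump condition $[BG_{\lambda,x}]_{x=y^-}^{x=y^+}=I_n$ dictated by $(L-\lambda)G_\lambda=\delta_y I$. Translated into the first-order formulation and using $\CalS=\begin{pmatrix}-A & B\\ -B & 0\end{pmatrix}$, these two conditions collapse into the single coordinate-free identity
$$
\Phi(z;\lambda)\,M(\lambda)\,\tilde\Psi^{*}(z;\lambda)=\bar\CalS^{-1}(z)\quad\text{at } z=y,
$$
from which the stated formula $M(\lambda)=\Phi^{-1}(z;\lambda)\bar\CalS^{-1}(z)\tilde\Psi^{-1*}(z;\lambda)$ follows. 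Block-diagonality of $M$ is inherited from the decoupling of the $\Phi^+\leftrightarrow\tilde\Psi^0$ and $\Phi^0\leftrightarrow\tilde\Psi^+$ sectors, i.e.\ the mutual duality between the modes decaying at $+\infty$ and the modes vanishing at $x=0$.

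Finally, I would verify that the right-hand side is indeed independent of $z$. This is precisely the content of the biorthogonality relations $\tilde W^{0,+}_j\CalS W^{0,+}_k=\delta^j_k$: by Lemma~\ref{duality} each such pairing is constant in $z$, so the whole product $\tilde\Psi^{*}(z)\CalS(z)\Phi(z)$ is $z$-independent, and therefore so is $M(\lambda)$. In effect, evaluating the constant matrix at any convenient reference point $z$ gives the formula and shows well-definedness of the construction.

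The main obstacle is the linear-algebra bookkeeping at the jump step: the second-order jump $[BG_{\lambda,x}]=I_n$ has to be lifted correctly to a $2n\times 2n$ jump for the first-order system, one must distinguish the natural first-order pairing $\CalS$ from the weighted inverse $\bar\CalS$ (whose discrepancy encodes the coefficient $B$), and the sign conventions carried over from the partition-of-unity form in Lemma~\ref{rep} must be matched across $x=y$. Once that accounting is in place, the rest is routine duality reasoning in the spirit of the whole-line construction of \cite{ZH,MZ3}.
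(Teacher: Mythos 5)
Your proposal is correct and follows essentially the same route as the paper, which states the corollary without a separate proof precisely because it invokes the whole-line construction of \cite{ZH,MZ3}: there, as in your argument, $M$ is determined by the jump conditions of $G_\lambda$ at $x=y$ lifted to the first-order (in $x$ and $y$) kernel, with the constancy of the pairing $\tilde W\CalS W$ (Lemma \ref{duality}) giving $z$-independence and the block structure. The only caveat is the one you already flag: the passage from the two scalar conditions (continuity of $G_\lambda$ and $[BG_{\lambda,x}]=I_n$) to the full $2n\times 2n$ identity $\Phi M\tilde\Psi^*=\bar\CalS^{-1}$ also uses that the rows solve the adjoint equation in $y$ (Lemma \ref{greendual}), which is exactly the bookkeeping carried out in \cite{ZH}.
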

\section {Low-frequency bounds}
\smallskip
 Our goal in this section is the estimation of the resolvent kernel
 in the critical regime $|\lambda|\to 0$, i.e., the large time
 behavior of the Green function G, or global behavior in space and
 time. We are basically following the same treatment as that
carried out for viscous shock waves of strictly parabolic conservation laws
in \cite{ZH,MZ3}; we refer to those references for details.
 In the low frequency case the behavior is essentially governed by
 the equation
\begin{equation}
U_t=L_+U :=-A_+U_x+B_+U_{xx}
\end{equation}
\begin{prop}
Assuming (H0)-(H4), let $K$ be the order of the pole of $G_\lambda$ at
$\lambda=0$ and r be sufficiently small that there are no other
poles in $B(0,r)$.
Then for $\lambda \in \Omega_\theta$ such that
$|\lambda|\leq r$ and for $x>y>0$ we have
\begin{equation} \label{greenlow}
\begin{pmatrix} G_\lambda & G_{{\lambda}_y} \\
                G_{{\lambda}_x} & G_{{\lambda}_{xy}}
\end{pmatrix}=\sum_{j,k}d_{jk}(\lambda)\phi_j^+(x)\tilde{\psi}_k^+(y)+\sum_{j,k}(\lambda)\phi_k^+(x)\tilde{\phi}_k^+(y),
\end{equation}
where $d_{jk}(\lambda)=\CalO(\lambda^{-K})$ and
$e_{jk}(\lambda)=\CalO(\lambda^{1-K})$ are scalar meromorphic
functions, moreover $K \leq$ order of vanishing of the Evans
function $D(\lambda)$ at $\lambda=0$.
\end{prop}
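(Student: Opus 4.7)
The plan is to start from the representation of the Corollary above for $x>y>0$,
\begin{equation*}
G_\lambda(x,y)=(I_n,0)\,\Phi^+(x;\lambda)\,M^+(\lambda)\,\tilde\Psi^{0*}(y;\lambda)\,(I_n,0)^{tr},
\end{equation*}
together with analogous formulas for $G_{\lambda_x}$, $G_{\lambda_y}$, $G_{\lambda_{xy}}$ obtained by replacing one or both factors of $(I_n,0)$ by $(0,I_n)$, and then to re-expand the $y$-dependent dual factor in the full adjoint basis at $+\infty$. Since $\tilde\Phi^+$ and $\tilde\Psi^+$ together form a basis of the $2n$-dimensional solution space of the adjoint system \eqref{firstorderZ} on $y>0$, we may uniquely write
\begin{equation*}
\tilde\Psi^{0*}(y;\lambda)=\tilde\Psi^{+*}(y;\lambda)\,C(\lambda)+\tilde\Phi^{+*}(y;\lambda)\,E(\lambda)
\end{equation*}
for matrix-valued meromorphic coefficients $C(\lambda),E(\lambda)$ on $B(0,r)$. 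Substituting reproduces the two displayed sums, with the scalar coefficients $d_{jk}$ and $e_{jk}$ identified as the entries of $M^+(\lambda)C(\lambda)$ and $M^+(\lambda)E(\lambda)$, respectively.

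The bound $d_{jk}=\CalO(\lambda^{-K})$ then reduces to two ingredients. First, $C(\lambda)$ is analytic at $\lambda=0$: the dual bases $\tilde\Phi^+,\tilde\Psi^+$ extend analytically through $\lambda=0$ by the Conjugation Lemma applied to the adjoint system \eqref{firstorderZ}, and the columns of $\tilde\Psi^0$ are analytic at $\lambda=0$ by smooth dependence on initial data, so the change-of-basis matrix $C$ is analytic there too. Second, $M^+(\lambda)$ has a pole of order at most $K$: the Cramer's-rule expression $M(\lambda)=\Phi^{-1}(z;\lambda)\bar\CalS^{-1}(z)\tilde\Psi^{-1*}(z;\lambda)$ recorded in the Corollary places the Evans determinant $D_L(\lambda)$ in the denominator (up to nonvanishing bounded factors), so every pole of $M^+$ occurs at a zero of $D_L$. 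Combined with the defining property of $K$ as the pole order of $G_\lambda$ at $\lambda=0$, this simultaneously yields the proposition's final assertion that $K$ is at most the order of vanishing of $D_L$ at $0$.

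The main obstacle is establishing the extra factor $E(\lambda)=\CalO(\lambda)$ which improves the second estimate to $e_{jk}=\CalO(\lambda^{1-K})$. To obtain this I would mirror the argument of Lemma~4.7 of \cite{ZH} (and the corresponding computation in \cite{MZ3}): evaluate the decomposition at $\lambda=0$ and use Lemma~\ref{duality} together with the asymptotic splitting at $+\infty$ to check that, at $\lambda=0$, the boundary-initialized dual columns of $\tilde\Psi^0(\cdot;0)$ lie entirely in the span of $\tilde\Psi^{+*}(\cdot;0)$; any component along $\tilde\Phi^{+*}(\cdot;0)$ would force an incompatibility with the hyperbolic structure of the limiting adjoint ODE at $\lambda=0$, since $\tilde\Phi^{+*}$ is the dual basis to the growing subspace $\Psi^+$ while $\tilde\Psi^0$ is anchored by the boundary data at $y=0$. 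Hence $E(0)=0$, and by analyticity of $E$ at the origin, $E(\lambda)=\CalO(\lambda)$, completing the bound on $e_{jk}$ and hence the proposition.
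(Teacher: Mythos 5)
Your overall skeleton is close in spirit to the argument the paper is invoking: the paper itself gives no proof here, citing \cite{ZH} (Prop.~7.1 and Thm.~6.3), and your plan --- start from the Corollary's representation $G_\lambda=(I_n,0)\Phi^+M^+\tilde\Psi^{0*}(I_n,0)^{tr}$, re-expand $\tilde\Psi^{0}$ in the dual basis $\{\tilde\Psi^+,\tilde\Phi^+\}$ at $+\infty$, and control poles through the Evans determinant --- is the right framework, and your sandwich argument does give the final assertion $K\le$ order of vanishing of $D_L$ at $0$. One secondary slip: for $d_{jk}=\CalO(\lambda^{-K})$ you need the pole order of $M^+$ at $\lambda=0$ to be at most $K$, but Cramer's rule only yields ``at most the order of vanishing of $D_L$,'' which is the wrong direction since $K\le \mathrm{ord}_0 D_L$; the correct quick argument is to recover $M^+$ from the derivative-augmented Green matrix by $\CalS$-pairing against the analytic, full-rank bases (so that $\mathrm{pole}(M^+)\le \mathrm{pole}(G_\lambda)=K$), rather than from the Cramer's-rule formula.

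The serious gap is the claim $E(0)=0$, which is exactly the step carrying the content $e_{jk}=\CalO(\lambda^{1-K})$. Pairing your decomposition $\tilde\Psi^{0}=\tilde\Psi^{+}C+\tilde\Phi^{+}E$ with $\Psi^+$ under $\CalS$ and using the duality relations $\tilde\phi^+_j\CalS\psi^+_k=\delta_{jk}$, $\tilde\psi^+_j\CalS\psi^+_k=0$ gives $E(\lambda)=\tilde\Psi^{0}\CalS\,\Psi^{+}$ (independent of $y$). Since $\Span\tilde\Psi^0$ is precisely the $\CalS$-annihilator of $\Span\Psi^0$ and $\Span\tilde\Psi^+$ that of $\Span\Psi^+$, the statement $E(0)=0$ is equivalent to $\Span\Psi^0(\cdot;0)=\Span\Psi^+(\cdot;0)$, i.e., to the coincidence at $\lambda=0$ of two (analytically but otherwise arbitrarily) chosen complements, one anchored at the boundary and one by growth at $+\infty$. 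There is no structural reason for this and generically it fails. Your stated justification also appeals to ``the hyperbolic structure of the limiting adjoint ODE at $\lambda=0$,'' but at $\lambda=0$ the slow modes $\mu^+_{r+j}=-\lambda/a^+_j+\dots$ degenerate to zero, so $\BbbA_+(0)$ has an $n$-dimensional center subspace: $\lambda=0$ lies on the boundary of the domain of consistent splitting, and hyperbolicity is exactly what breaks there (the bases only continue to $\lambda=0$ via the gap-lemma construction). The extra factor of $\lambda$ in $e_{jk}$ is therefore not a property of $E$ alone; it is a statement about the products $M^+C$, $M^+E$ for the specific normalized choices of bases, obtained in \cite{ZH} by the careful duality/Cramer's-rule computation of the transmission and reflection coefficients (Prop.~7.1 there, together with Thm.~6.3 linking $K$ to the vanishing order of $D$). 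Your proposal does not reproduce that computation, so the key bound remains unproved.
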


\begin{proof}
See \cite{ZH} Proposition 7.1 for the first statement and theorem
6.3 for the second statement linking order $K$ of the pole to
multiplicity of the zero of the Evans Function.
\end{proof}

\begin{lem}
Assuming (H0)-(H4), for $|\lambda|$ sufficiently small, the
eigenvalue equation $(L_{+}-\lambda)W = 0$ associated with the
limiting, constant-coefficient operator $L_+$ has a basis of $2n$
solutions $ \bar{W}^{+}_j=e^{\mu^{+}_j(\lambda)x}V_{j}(\lambda)$
where $\mu^{+}_{j}$ and $V^{+}_{j}$are analytic in $\lambda$,
consisting of n fast modes
 \begin{equation}
 \begin{aligned}
\mu_{j}^{+}&=\gamma^{+}_{j}+\CalO (\lambda),\\
  V_{j}^{+}&=  S^{+}_{j} + \CalO(\lambda),\\
 \end{aligned}
  \end{equation}
where $\gamma^+_j$, $S_j^+$ are
eigenvalues and associated right eigenvectors
of $B_+^{-1}A_+$,
and $n$ slow modes
\begin{equation} \label{mubound}
\begin{aligned}
\mu_{r+j}^{+}(\lambda)&:=
-\lambda/a^{+}_{j}+\lambda^2\beta^{+}_{j}/a^{+^{3}}_{j}+\CalO(\lambda^3),\\
V_{r+j}^{+}(\lambda) &:= r^{+}_{j} +\CalO(\lambda),\\
\end{aligned}
\end{equation}
where $a_j^{+}$, $l_j^+$, $r_j^{+}$ are eigenvalues and left and
right eigenvectors of $A_+:=dF(u_+)$ , and $\beta_j^+:=l_j^+B_+r_j^+>0$
with $B_+:=B(u_+)$.
The same is true for the adjoint eigenvalue equation
$$
(L^+-\lambda)^*Z=0,
$$
i.e, it has a basis of solutions
$$ \bar{ \tilde
W}_{j}^+ = e^{-\mu_j^+(\lambda)x}\tilde{V}_j(\lambda)
$$
with
\begin{equation}
\tilde{V}_{j}^+(\lambda)= \tilde T_j^{+} + \CalO(\lambda),
\end{equation}
\begin{equation}
\tilde{V}_{r+j}^+(\lambda)= l_j^+ + \CalO(\lambda),
\end{equation}
$\tilde V^+$ analytic in $\lambda$.
\end{lem}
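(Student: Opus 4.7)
The plan is to look for exponential solutions $W=e^{\mu x}V$ of the constant-coefficient equation $B_+ W''-A_+ W'-\lambda W=0$, which reduces the lemma to the algebraic characteristic equation
\begin{equation*}
P(\mu,\lambda)V:=(\mu^2 B_+-\mu A_+-\lambda I)V=0.
\end{equation*}
At $\lambda=0$ this factors as $\det P(\mu,0)=\mu^n\det(\mu B_+-A_+)$, so the $2n$ roots split into an $n$-fold root at $\mu=0$ (the slow modes) and $n$ simple roots $\mu=\gamma_j^+$ equal to the eigenvalues of $B_+^{-1}A_+$ (the fast modes), with associated kernels spanned by $r_j^+$ and $S_j^+$ respectively.

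The fast modes are the easy case: each $(\gamma_j^+,0)$ is a simple root of the determinant with one-dimensional kernel $\Span\{S_j^+\}$, and $\partial_\mu \det P(\gamma_j^+,0)\neq 0$, so the analytic implicit function theorem (equivalently, Kato's group eigenprojection as in the proof of Lemma \ref{bases}) supplies analytic branches $\mu_j^+(\lambda)=\gamma_j^++\CalO(\lambda)$ and $V_j^+(\lambda)=S_j^++\CalO(\lambda)$.

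The slow modes are the only real obstacle, since the $n$-fold crossing at $(\mu,\lambda)=(0,0)$ is singular and the implicit function theorem fails directly. The key step is the rescaling $\mu=\lambda\sigma$: substituting and dividing $P(\mu,\lambda)V=0$ by $\lambda$ yields the regular perturbed eigenvalue problem
\begin{equation*}
\bigl(A_++\sigma^{-1}I-\lambda\sigma B_+\bigr)V=0,
\end{equation*}
whose $\lambda=0$ limit is $A_+V=-\sigma^{-1}V$. By (H2), $A_+$ has $n$ simple nonzero real eigenvalues $a_j^+$ with right (resp.\ left) eigenvectors $r_j^+$ (resp.\ $l_j^+$), producing $n$ simple analytic branches $\sigma_j(\lambda)=-1/a_j^++\CalO(\lambda)$ and $V_{n+j}^+(\lambda)=r_j^++\CalO(\lambda)$; unwinding the rescaling gives $\mu_{n+j}^+=-\lambda/a_j^++\CalO(\lambda^2)$. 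To pin down the quadratic coefficient I would substitute $V_{n+j}^+=r_j^++\lambda V_1+\CalO(\lambda^2)$ together with $\mu_{n+j}^+=-\lambda/a_j^++\lambda^2 c_j+\CalO(\lambda^3)$ into $P(\mu,\lambda)V=0$ and collect the $\CalO(\lambda^2)$ balance
\begin{equation*}
(a_j^+)^{-2}B_+ r_j^+ + (a_j^+)^{-1}A_+ V_1 - c_j A_+ r_j^+ - V_1=0,
\end{equation*}
then apply the Fredholm solvability condition by pairing against $l_j^+$; using $l_j^+A_+=a_j^+l_j^+$ and the normalization $l_j^+ r_j^+=1$, the $V_1$ contributions cancel and one reads off $c_j a_j^+=l_j^+ B_+ r_j^+/(a_j^+)^2=\beta_j^+/(a_j^+)^2$, i.e.\ $c_j=\beta_j^+/(a_j^+)^3$ as claimed.

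The adjoint statement is handled symmetrically: $(L_+-\lambda)^*Z=0$ admits exponential solutions $Z=e^{-\mu x}\tilde V$ with $(\mu^2 B_+^*+\mu A_+^*-\bar\lambda I)\tilde V=0$, and the same fast/slow dichotomy applies verbatim; the sign $-\mu$ is forced by the biorthogonality pairing $\tilde W\CalS W\equiv\const$ of Lemma \ref{duality}, which also provides the cleanest way to identify the limiting adjoint vectors $\tilde T_j^+$ and $l_j^+$ as the duals of $S_j^+$ and $r_j^+$ under $\CalS$. The hard step is genuinely the slow-mode rescaling; everything else is standard analytic perturbation theory for a simple eigenvalue, and the higher-order expansion is bookkeeping.
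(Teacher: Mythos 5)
Your proposal is essentially the argument the paper itself does not write out but outsources to \cite{MZ3}: pass to the quadratic pencil $P(\mu,\lambda)=\mu^2B_+-\mu A_+-\lambda I$, split the $2n$ roots at $\lambda=0$ into the $n$ roots of $\det(\mu B_+-A_+)$ (fast) and the $n$-fold root $\mu=0$ (slow), and resolve the slow degeneracy by the rescaling $\mu=\lambda\sigma$, which turns the problem into a regular perturbation of the simple, real, nonzero spectrum of $A_+$ guaranteed by (H2). Your second-order bookkeeping is correct: pairing the $\mathcal{O}(\lambda^2)$ balance against $l_j^+$ kills the $V_1$ terms and yields $c_j=\beta_j^+/(a_j^+)^3$, exactly as in the low-frequency expansions of \cite{ZH,MZ3}. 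So in substance this matches the cited proof.

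Two small points. First, your fast-mode step assumes each $\gamma_j^+$ is a simple root of $\det(\mu B_+-A_+)$, i.e.\ that $B_+^{-1}A_+$ has simple eigenvalues; (H1)--(H3) guarantee only that its spectrum avoids the imaginary axis (so the fast roots are uniformly separated from the slow ones and give $e^{-\theta|x|}$ decay/growth), not simplicity. The standard repair, as in \cite{MZ3}, is to work with the total (group) eigenprojection onto the fast invariant subspace of the associated first-order matrix $\mathbb{A}_+(\lambda)$, which is analytic for $|\lambda|$ small, and choose an analytic basis by Kato's construction; only the uniform exponential rate of the fast modes is used in the resolvent estimates, so nothing downstream is affected. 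Second, a sign slip in the adjoint part: with $Z=e^{-\mu x}\tilde V$ the adjoint pencil is $(\mu^2B_+^*-\mu A_+^*-\lambda I)\tilde V=0$, i.e.\ $\tilde V$ is a left null vector of the same $P(\mu,\lambda)$ at the same roots $\mu_j^+(\lambda)$; your "$+\mu A_+^*$" corresponds to the exponent $+\mu$. This is harmless, and your identification of the limiting adjoint vectors with $l_j^+$ (and the duals of $S_j^+$) via the pairing of Lemma \ref{duality} is the right mechanism.
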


\begin{proof}
See \cite{MZ3}.
\end{proof}

\begin{prop}
Assume (H0)-(H4) and \eqref{D}, then, for $r>0$ sufficiently small,
the Resolvent kernel $G_\lambda$ associated with the  linearized
evolution equation
\begin{equation}\label{lineq}
U_t=L_+U:=-A_+U_x+B_+U_{xx}
\end{equation}
satisfies, for $0\le  y \le x$:
\begin{equation}\label{ptres}
\begin{aligned}
|\partial_x^\gamma \partial_y^\alpha G_\lambda(x,t;y)|&\le C
(|\lambda|^{\gamma} + e^{-\theta |x|}) (|\lambda|^{\alpha} + e^{-\theta
|y|}) \Big(\sum_{a_k^{+}>0} \big|e^{ (-\lambda/a^{+}_k +
\lambda^2 \beta^{+}_k/{a^{+}_k}^3 )(x-y)}\big|\\
&\quad +\sum_{a_k^{+} < 0, \,  a_j^{+} > 0}
\big|e^{(-\lambda/a^{+}_j + \lambda^2 \beta^{+}_j/{a^{+}_j}^3
)x +(\lambda/a^{+}_k - \lambda^2 \beta^{+}_k/{a^{+}_k}^3 )y}\big|
\Big),
\end{aligned}
\end{equation}
$0\le |\alpha|, |\gamma| \le 1$, $\theta>0$,
with similar bounds for $0\le x\le y$.
Moreover, each term in the summation on the righthand side of \eqref{ptres}
bounds a separately analytic function.
\end {prop}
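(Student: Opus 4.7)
The plan is to combine the representation from Lemma \ref{rep} with the explicit slow/fast basis of the preceding lemma, following the strategy of \cite{ZH,MZ3}. Since $L_+$ is constant-coefficient, the Conjugation Lemma reduces to the identity and one has a global basis of exponential solutions $e^{\mu_j^+(\lambda)x}V_j^+(\lambda)$. The stable manifold $\Phi^+$ at $+\infty$ is spanned by the $n$ decaying modes, split into fast modes $e^{-\gamma_j^+ x}(S_j^++\CalO(\lambda))$ with $\mathrm{Re}\,\gamma_j^+\ge\theta>0$ and slow modes $e^{(-\lambda/a_j^++\lambda^2\beta_j^+/(a_j^+)^3+\CalO(\lambda^3))x}(r_j^++\CalO(\lambda))$ indexed by $a_j^+>0$; the complementary subspace $\Psi^+$ is spanned by the remaining $n$ growing modes. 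For $0\le y\le x$, Lemma \ref{rep} gives
$$
G_\lambda(x,y)=(I_n,0)\,\Phi^+(x)\,M^+(\lambda)\,\tilde\Psi^{0*}(y)\,(I_n,0)^{tr},
$$
and $M^+(\lambda)$ is fixed by the jump condition at $x=y$ together with the Dirichlet condition $G_\lambda(0,y)=0$.

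Next I would expand $\tilde\Psi^0(y)$, the dual at $y=0$ to the boundary-adapted basis $\Phi^0$, in the constant-coefficient basis at $+\infty$. Writing $\Phi^0(y)=\Phi^+(y)T^{++}(\lambda)+\Psi^+(y)T^{+-}(\lambda)$, with $T^{\pm}$ determined by the initial data $W_j^0(0)=e_{n+j}$, and dualizing via Lemma \ref{duality} gives a scattering decomposition of $\tilde\Psi^{0*}(y)$ as a linear combination of dual slow modes $e^{+\mu_j^+(\lambda)y}\tilde r_j^+$ indexed by $j$ with $a_j^+$ of either sign, plus fast-mode contributions controlled by $\CalO(e^{-\theta|y|})$. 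The coefficients of this expansion are built from the Wronskian $D_{L_+}(\lambda)=\det(\Phi^0,\Phi^+)|_{x=0}$ and its $(n-1)\times(n-1)$ minors; by the stability hypothesis \eqref{D} and the preceding proposition, $D_{L_+}$ is analytic on $B(0,r)$ with at worst an order-$K$ zero at $\lambda=0$, so these scalars are meromorphic of order at most $|\lambda|^{-K}$, supplying the $d_{jk}$, $e_{jk}$.

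Multiplying out $\Phi^+(x)M^+\tilde\Psi^{0*}(y)$ and pairing the slow modes, two families emerge. The matched pairs, in which the outgoing index $j$ at $x$ equals the source index at $y$ (both with $a_j^+>0$), combine to $e^{\mu_j^+(\lambda)(x-y)}$ and yield the first sum in \eqref{ptres}. The mismatched pairs encode the Dirichlet reflection at $x=0$: a ``backward'' slow mode at $y$ (index $a_k^+<0$, so that $e^{-\mu_k^+(\lambda)y}$ decays in $y$) is coupled by the boundary-matching inversion to an outgoing slow mode at $x$ with $a_j^+>0$, producing precisely the second sum. Fast-mode contributions at either endpoint are already $\CalO(e^{-\theta|x|})$ or $\CalO(e^{-\theta|y|})$ and absorb into the additive prefactors. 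Derivatives in $x$ or $y$ of a slow mode bring down $\mu_j^+(\lambda)\sim\lambda$, accounting for the factors $|\lambda|^\gamma$ and $|\lambda|^\alpha$; derivatives of a fast mode give $\gamma_j^+e^{-\gamma_j^+ x}$, still controlled by $Ce^{-\theta|x|}$. Separate analyticity of each summand follows from analyticity of the individual $V_j^+(\lambda)$ and of the scalar scattering coefficients built from Evans minors.

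The main obstacle I foresee is the precise tracking of the reflection combinatorics at $y=0$: one must verify that inverting the boundary-matching system produces exactly the claimed pairing between incoming modes with $a_k^+<0$ and outgoing modes with $a_j^+>0$, with scalar coefficients uniformly $\CalO(|\lambda|^{-K})$, and that no parasitic tail from the unstable slow modes of $\Psi^+$ leaks into the final answer. This amounts to a careful Cramer's-rule computation using the Evans minors as in \cite{ZH,MZ3}, adapted to the half-line with Dirichlet data in place of a left endpoint at $-\infty$.
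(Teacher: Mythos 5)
Your overall route (a scattering decomposition of the resolvent kernel in the slow/fast basis at $+\infty$, with scalar coefficients built from Evans minors) is the ZH/MZ3 strategy that the paper also relies on, but as written it does not reach \eqref{ptres}, for a concrete reason: you stop at the assertion that the scalar coefficients are ``meromorphic of order at most $|\lambda|^{-K}$''. The right-hand side of \eqref{ptres} contains no negative powers of $\lambda$, so coefficients that are merely $\CalO(|\lambda|^{-K})$ with $K\ge 1$ do not suffice. The entire role of the stability condition \eqref{D} in this proposition is that $D$ has no zeros in $\Re\lambda\ge 0$, hence $D(0)\neq 0$, hence by continuity $D$ is nonvanishing on all of $|\lambda|\le r$ for $r$ small; consequently the pole order $K$ in the expansion \eqref{greenlow} is zero and the coefficients $d_{jk}$, $e_{jk}$ are uniformly bounded there. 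The paper's proof consists of exactly this observation plus mode estimates. Your proposal invokes \eqref{D} but never draws this conclusion, and your closing paragraph still carries ``coefficients uniformly $\CalO(|\lambda|^{-K})$'', which is incompatible with the claimed bound unless you argue $K=0$.

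Two further points of comparison. First, the paper does not re-derive the scattering/reflection structure you describe: it quotes the representation \eqref{greenlow} (Proposition 7.1 of \cite{ZH}) and then only needs to estimate the individual modes; the ``reflection combinatorics'' at the boundary that you yourself flag as the main obstacle is precisely the content of that quoted result, so in your version it remains an unestablished step rather than a citation. Second, you read the statement's $L_+$ literally and declare the Conjugation Lemma trivial; but the kernel being estimated (and later used in the proof of Theorem \ref{greenbounds}) is that of the variable-coefficient operator $L$ about $\bar u$, with the bound merely expressed through the limiting quantities $a_k^+$, $\beta_k^+$. Accordingly the paper writes each mode as in \eqref{lphi+}, $\phi_j^+=e^{\mu_j(\lambda)x}(I+\Theta)\,(V_j,\mu_j V_j)^{tr}$, and combines the exponential bounds \eqref{Theta} on $\Theta_+$ with the expansions \eqref{mubound} to produce and absorb the prefactors $(|\lambda|^{\gamma}+e^{-\theta|x|})(|\lambda|^{\alpha}+e^{-\theta|y|})$; dropping the conjugation step discards the part of the argument that handles the actual boundary-layer coefficients.
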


\begin{proof}
By 1.8 $D$ does not vanish on $Re(\lambda)\geq 0$, hence, by continuity,
on $|\lambda|\le r$.
Thus, according to
\eqref{greenlow}, all $|d_{jk}(\lambda)|$ are uniformly bounded
on $|\lambda|\le r$,
and so it is enough to find estimates for fast and slow
modes $\phi_j^+$, $\tilde{\phi}^+_j$, $\psi_j^+$ and
$\tilde{\psi}^+_j$. By using \eqref{phi+} we find:
\begin{equation}  \label{lphi+}
 \begin{pmatrix} \phi^+_j \\ \partial_x \phi^+_j
 \end{pmatrix}
=e^{\mu_j(\lambda)x}P^+\begin{pmatrix}V_j \\ \mu_j
 V_j\end{pmatrix}=e^{\mu_j(\lambda)x}(I+\Theta)\begin{pmatrix}V_j \\ \mu_j
 V_j\end{pmatrix}
\end{equation}
and similarly for $\tilde{\phi}^+_j$, $\psi_j^+$ and
$\tilde{\psi}^+_j$. Now using
\eqref{Theta} and
the fact, by \eqref{mubound}, that
$e^{\mu_j(\lambda)x}$ is of order $e^{-| \theta x |}$ for fast modes
and order
$e^{ -\lambda/a^{+}_{j}+\lambda^2\beta^{+}_{j}/a^{+^{3}}_{j}+\CalO(\lambda^3)}$
for slow modes,
substituting this and corresponding dual estimates
in \eqref{lphi+} and grouping terms, we obtain
the result.
%
\end{proof}


\section{High frequency bounds}
To analyze the high frequency behavior of the Green function of
the boundary layer, we first establish some bounds for the projection
terms in the Green function, using the symmetric formula

\begin{equation}\label{repeq}
\begin{pmatrix}
G_\lambda(x,y) & \partial_y G_\lambda(x,y) \\
\partial_x G_\lambda(x,y) & \partial_x \partial_y G_\lambda(x,y) \\
\end{pmatrix}=
\begin{cases}
   \CalF^{y\rightarrow x} \Pi_+(x)\CalS^{-1}(y) \quad &if \quad x>y,\\
   \tilde{\CalF}^{x\rightarrow y} \tilde{\Pi}_+(x)\CalS^{-1}(x)
\quad& if \quad x<y.
\end{cases}
\end{equation}

By setting $\bar{x}=|\lambda ^\frac{1}{2}|x$,
 $\bar{\lambda}=\frac{\lambda}{|\lambda|}$, $\bar{B}(\bar{x})=
 B(\frac{\bar{x}}{\lambda^\frac{1}{2}})$, $\bar{w}(\bar{x})=w(\frac{x}{\lambda^\frac{1}{2}})$
in the eigenvalue equation $Lw=\lambda w$ associated with
  \eqref{linearized} we obtain
  \begin{equation}
  \bar{W}'=\bar{\mathbb{B}}\bar{W}+\CalO(|\lambda^{-\frac{1}{2}}|)\bar W
  \end{equation}
  where
  \begin{equation}
  \mathbb{B}=\begin{pmatrix}
  0&I \\ \bar {\lambda}\bar{B}&0
  \end{pmatrix}
  \end{equation}
  and $\mathbb{B}'=\CalO {(|\lambda^{-\frac{1}{2}}|)}$ and
  $|\bar \lambda|=1$. Since $\mathbb{B}(\lambda,\bar{x})$ varies within a compact
  set, then
  there are $C^1$ eigenprojections
  $P_0$ and $P_+$ with property $|P'_+|=\CalO(|\lambda^{-\frac{1}{2}}|)$
  and $|P'_0|=\CalO(|\lambda^{-\frac{1}{2}}|)$ taking $\bar
  W$ onto the stable and unstable subspace.
  By using the two new coordinates $Y_+=P_+\bar W$ and $Y_0=P_0\bar
  W$, we obtain
  \begin{equation}
  \begin{pmatrix} Y_+ \\ Y_0 \end{pmatrix}'=\begin{pmatrix}
  A_+&0\\0&A_0\end{pmatrix}\begin{pmatrix} Y_+ \\ Y_0
  \end{pmatrix}+\CalO(|\lambda^{-\frac{1}{2}}|)\begin{pmatrix}y \\ y \end{pmatrix}.
  \end{equation}
Equivalently, we can find continuous invertible transformations
$Q_+$, $Q_0$ such that $E_+=Q_+A_+Q_+^{-1}$ and $E_0=Q_0A_0Q_0^{-1}$
where
\begin{equation} \label{beta}
 Re(E):=\frac{1}{2}(E_++E^*_+)< -\beta^{-\frac{1}{2}}I.
\end{equation}
in the sense of quadratic forms.\\
 Again by coordinate change $Z_+=Q_+Y_+$, $Z_0=Q_0Y_0$ we find
\begin{equation}
  \begin{pmatrix} z_+ \\ z_0 \end{pmatrix}'=\begin{pmatrix}
  E_+&0\\0&E_0\end{pmatrix}\begin{pmatrix} z_+ \\ z_0
  \end{pmatrix}+\CalO(|\lambda^{-\frac{1}{2}}|)\begin{pmatrix}z \\ z \end{pmatrix}
  \end{equation}
where
\begin{equation} \label{C}
\frac{|\bar w|}{C} \leq |z| \leq C|\bar w|.
\end{equation}
From this we find by energy estimate that
\begin{equation}
(|z_+|^2)'< -2\beta^{-\frac{1}{2}}|z_+|^2
\end{equation}
and hence
\begin{equation}
\frac{|z_+(x)|}{|z_+(y)|} \leq
e^{-\tilde{\beta}^{-\frac{1}{2}}|(x-y)|}
\end{equation}
for any solution$z_+$ decaying at $\infty$, where
$\tilde{\beta}<\beta$ and thus
\begin{equation}
\frac{|z_(x)|}{|z_(y)|} \leq e^{-\beta^{-\frac{1}{2}}|(x-y)|}
\end{equation}
for $x>y$, provided that $|\lambda|$ is sufficiently large. From
this we obtain
\begin{equation} \label{Wbound}
\frac{|\bar W(x)|}{|\bar W(y)|} \leq C^2
e^{-\beta^{-\frac{1}{2}}|(x-y)|}
\end{equation}
where $C$ is as in \eqref{C}.
Applying a symmetric argument for the adjoint equation, we
obtain the following lemma.


\begin{lem}\label{flowlemma}
On the manifolds $\Phi_+$ and $\tilde{\Psi}_+$
defined in \eqref{phi+} and \eqref{tildepsi+},
for $\lambda$ sufficiently large, within the
sector $\Omega_\theta = \{\lambda : Re (\lambda) \geq -\theta_1 -
\theta_2 |Im (\lambda)|\}$, $\theta_1,\theta_2 >0,$
we have in rescaled coordinates $\bar x$, for some uniform $C>0$,
\begin{equation} \label{calF}
|\CalF^{y\rightarrow x}|, \,
|\tilde{\CalF}^{x\rightarrow y}|\leq
Ce^{-\frac{|y-x|}{C}}
\end{equation}
for $x>y$ and $x<y$ respectively.
\end{lem}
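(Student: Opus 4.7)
The plan is to read off both inequalities as restatements of the pointwise contraction \eqref{Wbound} already established for the stable manifold, together with its adjoint analogue.

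For the bound on $\CalF^{y\to x}$: since $\CalF^{y\to x}$ is by construction the propagator of the first-order system \eqref{gen}, any solution $W$ satisfies $W(x)=\CalF^{y\to x}W(y)$. The subspace $\Span \Phi^+$ is flow-invariant, and \eqref{Wbound} gives $|\bar W(x)|\le C^2 e^{-|\bar x-\bar y|/C}|\bar W(y)|$ for every solution lying in this subspace. Taking the supremum over unit vectors in $\Span\Phi^+(y)$, and absorbing the uniformly bounded change-of-coordinates factors coming from $P_+$ and $Q_+$ (controlled by the Conjugation Lemma together with \eqref{C}), yields the first inequality of \eqref{calF}.

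For the bound on $\tilde\CalF^{x\to y}$, I would repeat the same chain of reductions---rescaling, extraction of $C^1$ eigenprojections with $\CalO(|\lambda|^{-1/2})$ derivative, block-diagonalization into $E_+,E_0$, and energy estimate---applied instead to the adjoint system $Z'=Z\tilde{\mathbb A}$ of \eqref{firstorderZ}. Because $\tilde{\mathbb A}$ has the same block structure as $\mathbb A$ up to transposition and sign changes that preserve hyperbolicity of the leading block, the spectral gap \eqref{beta} carries over, and the Gronwall estimate gives exponential contraction of the dual stable manifold $\tilde\Psi^+$ as $x<y$ moves away from $y$, yielding the second inequality.

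The principal technical point is uniformity of $C$ across the full sector $\Omega_\theta$ as $|\lambda|\to\infty$. Estimate \eqref{beta} is verified directly for $\bar\lambda$ with $\mathrm{Re}\,\bar\lambda\ge 0$; for $\bar\lambda$ with small negative real part---the sliver coming from $\mathrm{Re}\,\lambda\ge -\theta_1-\theta_2|\mathrm{Im}\,\lambda|$---one exploits continuity of the eigenvalues of $\mathbb B(\bar\lambda,\bar x)$ in $\bar\lambda$ and notes that the contribution to $\mathrm{Re}(E_+)$ is at most $\CalO(\theta_1|\lambda|^{-1}+\theta_2)$, which is absorbed into the gap by choosing $\theta_1,\theta_2$ sufficiently small. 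This is the one step where the decay rate genuinely deteriorates relative to the case $\mathrm{Re}\,\bar\lambda\ge 0$, but the constant $C$ remains uniformly bounded, and the remaining manipulations are routine.
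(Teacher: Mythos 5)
Your proposal is correct and follows essentially the same route as the paper, which proves the lemma by the rescaled block-diagonalization and energy estimate yielding \eqref{Wbound} on the decaying manifold $\Phi_+$ and then simply invokes ``a symmetric argument for the adjoint equation'' to handle $\tilde{\Psi}_+$. Your closing remarks on uniformity of the constant over the sector $\Omega_\theta$ for small $\theta_1,\theta_2$ merely make explicit what the paper leaves implicit, and are consistent with its choice of sector.
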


\begin{lem}\label{projlemma}
In rescaled coordinates $\bar x$, $\bar \lambda$,
for
the projection terms $\Pi_+(y)$ and $\tilde{\Pi}_+(x)$, the
projection along $\Phi_0$ onto $\Phi_+$, for $\lambda$ sufficiently
large,
\begin{equation} \label{Pi(y)}
|\Pi_+(y)|, \, |\tilde{\Pi}_+(x)|<C
\end{equation}
for some uniform $C>0$.
\end{lem}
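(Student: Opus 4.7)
The plan is to work in the rescaled coordinates $\bar x = |\lambda|^{1/2}x$ introduced before Lemma \ref{flowlemma}, where the first-order system becomes $\bar W' = \bar{\mathbb{B}}(\bar x,\bar\lambda)\bar W + \CalO(|\lambda|^{-1/2})\bar W$. By (H1), together with $\bar\lambda$ restricted to the (compact) unit-circle portion of the sector $\Omega_\theta$, the frozen matrix $\bar{\mathbb{B}}$ has two $n$-dimensional spectral subspaces separated by a uniform spectral gap of order $\beta^{-1/2}$, with uniformly bounded $C^1$ eigenprojections $P_+, P_0$. Consequently the bound $|\Pi_+(y)|\le C$ reduces to showing that the two solution manifolds $\Phi^+(\bar y)$ and $\Phi^0(\bar y)$ appearing in $\Pi_+(y)=(\Phi^+,0)(\Phi^+,\Phi^0)^{-1}$ are \emph{uniformly transverse} as $\bar y\ge 0$ and $\bar\lambda$ vary over their compact parameter set.

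First, applying the Conjugation Lemma to the rescaled equation, $\Phi^+(\bar y)$ is obtained from the stable eigenspace of the limiting matrix $\bar{\mathbb{B}}_+$ via a conjugator $I+\Theta$ satisfying $|\Theta|=\CalO(|\lambda|^{-1/2})$ uniformly in $\bar y$, so $\Phi^+(\bar y)$ lies within an $\CalO(|\lambda|^{-1/2})$ perturbation of $\Range P_+(\bar y)$. Second, to control $\Phi^0(\bar y)$: at $\bar y=0$, $\Phi^0(0)=\ker\mathbb{B}$ is the coordinate subspace of vectors of the form $(0,w')$. A direct linear-algebra computation on the leading block of $\bar{\mathbb{B}}(0,\bar\lambda)$ shows that any stable eigenvector takes the form $(V,-\mu V)$ with $Re\,\mu>0$, so $\ker\mathbb{B}\cap\Range P_+(0)=\{0\}$, with the angle between them bounded below uniformly in $\bar\lambda$ over its compact range. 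For $\bar y>0$, decomposing $\Phi^0(\bar y)$ via the spectral splitting, the unstable components grow exponentially while stable components decay as in \eqref{Wbound}, so transversality between $\Phi^0(\bar y)$ and $\Phi^+(\bar y)$ can only strengthen.

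Combining these estimates, the Gramian $\det(\Phi^+(\bar y),\Phi^0(\bar y))$ is uniformly bounded away from zero, yielding $|\Pi_+(y)|\le C$; the companion bound for $\tilde\Pi_+(x)$ follows by applying the same rescaling argument to the adjoint system, mirroring Lemma \ref{flowlemma}. The principal obstacle is the uniform estimate near $\bar y=0$, where neither the flow decay estimate \eqref{Wbound} nor the energy argument behind it directly applies; this is essentially a high-frequency Evans function bound, and it is resolved by the observation that, as $|\lambda|\to\infty$ in rescaled coordinates, the Evans function converges to that of the constant-coefficient frozen problem at $\bar x=0$, which is nonzero by the explicit transversality computation between $\ker\mathbb{B}$ and the stable eigenspace performed above.
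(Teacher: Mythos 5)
Your proposal is correct in substance and rests on the same key mechanism that the paper itself highlights in Remark 5.4: at high frequency the boundary subspace $\ker\mathbb{B}$ is uniformly transverse to the stable subspace, so the projection along $\Phi^0$ onto $\Phi^+$ stays bounded. Your route, however, is genuinely different from the paper's. The paper's proof is a static linear-algebra computation: assuming one family of basis vectors has second components of relative size $\CalO(\epsilon)$ and the other spans a bounded graph over the complementary coordinate block, it normalizes the bases to $\begin{pmatrix} I_n\\ \CalO(\epsilon)\end{pmatrix}$ and $\begin{pmatrix} M\\ I_n\end{pmatrix}$ with $|M|\le c$, and bounds the projection by inverting $\begin{pmatrix} M& I_n\\ I_n& 0\end{pmatrix}+\CalO(\epsilon)$; verifying that $\Phi^+$, $\Phi^0$ actually have this block structure in rescaled coordinates is left implicit, resting on the preceding high-frequency diagonalization. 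You instead argue dynamically: explicit transversality of $\ker\mathbb{B}$ with the frozen stable subspace at $\bar y=0$, propagation of that transversality to all $\bar y>0$ via the exponential dichotomy, and then boundedness of the projection through a lower bound on the Gramian, with the adjoint bound by symmetry. The paper's version isolates a reusable matrix estimate and never has to discuss the evolution of $\Phi^0$; yours makes the geometric content (compatibility of the Dirichlet condition with high-frequency behavior) explicit and shows exactly where uniformity could fail.

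Two steps should be tightened. First, the uniform-in-$\lambda$ claim that $\Phi^+(\bar y)$ is an $\CalO(|\lambda|^{-1/2})$ perturbation of $\Range P_+(\bar y)$ does not follow from the Conjugation Lemma: in the rescaled variable the coefficient matrix approaches its limit only on the slow scale $\bar x\sim|\lambda|^{1/2}$, so the constants in \eqref{Theta} are not uniform in the high-frequency limit. The correct source is the slow-variation (tracking) structure already set up in Section 5 --- the $C^1$ projections with derivatives of size $\CalO(|\lambda|^{-1/2})$ and the energy estimate leading to \eqref{Wbound} --- which shows that the manifold of solutions decaying at $+\infty$ is an $\CalO(|\lambda|^{-1/2})$ graph over the pointwise stable subspace. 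Second, ``transversality can only strengthen'' for $\bar y>0$ is the crux and is asserted rather than proved: one should write $\Phi^0(\bar y)$ as a graph over the unstable subspace (possible at $\bar y=0$, with slope uniformly bounded, by your boundary computation) and check that the slope contracts under the block-diagonalized flow with $\CalO(|\lambda|^{-1/2})$ coupling; note that \eqref{Wbound} as stated applies only to solutions decaying at infinity, so it cannot be quoted directly for the mixed-mode solutions making up $\Phi^0$. Both repairs are standard and use only tools already present in the section, so the argument is sound modulo these citations.
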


\begin{proof}

Choosing the coordinates $\begin{pmatrix} W_1 \\ W_2
\end{pmatrix} \in \mathbb{C}^{2n}$  where $W^j=\begin{pmatrix} W^j_1 \\ W^j_2
\end{pmatrix}$,
we show for small enough $\epsilon$ and fixed $c>0$
such that$\frac{|W_2^j|}{|W_1^j|}\leq c\epsilon$ and
$\frac{|W_1^j|}{|W_2^j|}<c$ the projection along $E=\Span(W^1...W^n)$
onto $F=\Span(W^{n+1}...W^{2n})$\\
\begin{equation}
\Pi:=(w^1...w^{2n})(O_n,I_n)(w^1...w^{2n})^{-1}
\end{equation}
satisfies
\begin{equation}
|\Pi|\leq C_2(c,\epsilon)
\end{equation}
To show this without loss of generality we assume that
\begin{equation}
(w^{n+1}...w^{2n})=
\begin{pmatrix}
I_n \\ \mathcal{O}(\epsilon)
\end{pmatrix}
(w^1...w^n)=
\begin{pmatrix}
 \mathcal{O}(1)\\ I_n
\end{pmatrix}.
\end{equation}
Now it is sufficient to show that
\begin{equation}
|(w^1,...,w^{2n})|\leq C_2(c,\epsilon).
\end{equation}
But, this amounts to showing that
\begin{equation}
\Big|\begin{pmatrix} M & I_n \\ I_n & O \end{pmatrix} +
\mathcal{O}(\epsilon))^{-1}\Big| \leq C,
\end{equation}
which amounts to showing that
\begin{equation}
\Big|\begin{pmatrix} M & I_n \\ I_n & O \end{pmatrix}^{-1}\Big| \leq C_2(c),
\end{equation}
where $|M|\leq c$. But, this is easy to show because
$$
\begin{pmatrix} M & I_n \\ I_n & O
\end{pmatrix}^{-1}=\begin{pmatrix} I_n & -M \\ O & I_n \end{pmatrix},
$$
and so
$$
\Big|\begin{pmatrix} M & I_n \\ I_n & O \end{pmatrix}^{-1}\Big|
\leq 1+ |M| \leq C.
$$
\end{proof}

\begin{prop}
Assume (H0)-(H4) and \eqref{D}. Then, for $R>0$ sufficiently large,
the Resolvent kernel $G_\lambda$ associated with the  linearized
evolution equation \eqref{lineq} satisfies, for
$c,C>0$ and
$0\le |\alpha|, |\gamma| \le 1$:
\begin{equation}\label{hfbdeq}
\begin{aligned}
|\partial_x^\gamma \partial_y^\alpha G_\lambda(x,y)|&\le
C|\lambda|^{(\frac{|\alpha|+|\gamma|-1}{2})}
e^{-\sqrt{\lambda}\frac{|y-x|}{c}}
\end{aligned}
\end{equation}
\end{prop}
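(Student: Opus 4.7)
The plan is to read off the bound directly from the symmetric representation formula \eqref{repeq}, working in the rescaled variables $\bar x=|\lambda|^{1/2}x$, $\bar y=|\lambda|^{1/2}y$ that have already been set up, and then tracking how each $n\times n$ block of the resulting $2n\times 2n$ matrix scales back to unscaled coordinates. Lemmas \ref{flowlemma} and \ref{projlemma} provide, in rescaled coordinates, the exponential-decay bound on $\bar{\mathcal{F}}^{\bar y\to\bar x}$ and $\tilde{\bar{\mathcal{F}}}^{\bar x\to\bar y}$ and the uniform bound on $\bar\Pi_+,\tilde{\bar\Pi}_+$; what remains is purely algebraic bookkeeping of the $|\lambda|$-weights induced by the second-order-to-first-order lifting.

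First I would record the transformation of the lifted state. Since $W=(w,w_x)^T=D\bar W$ with $\bar W=(w,w_{\bar x})^T$ and $D:=\mathrm{diag}(I_n,|\lambda|^{1/2}I_n)$, the building blocks in \eqref{repeq} transform as
\[
\mathcal{F}^{y\to x}=D\,\bar{\mathcal{F}}^{\bar y\to\bar x}\,D^{-1},\qquad \Pi_+(x)=D\,\bar\Pi_+(\bar x)\,D^{-1},
\]
while $\mathcal{S}^{-1}(y)=\begin{pmatrix}0&-B^{-1}\\ B^{-1}&-B^{-1}AB^{-1}\end{pmatrix}$ is $\lambda$-independent and uniformly bounded. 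Combining Lemmas \ref{flowlemma} and \ref{projlemma} in rescaled coordinates gives $|\bar{\mathcal{F}}^{\bar y\to\bar x}\bar\Pi_+(\bar x)|\le Ce^{-|\bar x-\bar y|/C}=Ce^{-|\lambda|^{1/2}|x-y|/C}$, and on the sector $\Omega_\theta$ this is comparable (after adjusting $c$) to the advertised factor $e^{-\sqrt{\lambda}|y-x|/c}$.

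Next I would write
\[
\begin{pmatrix}G_\lambda & \partial_yG_\lambda\\ \partial_xG_\lambda & \partial_x\partial_yG_\lambda\end{pmatrix}
=D\bigl(\bar{\mathcal{F}}^{\bar y\to\bar x}\bar\Pi_+(\bar x)\bigr)\bigl(D^{-1}\mathcal{S}^{-1}(y)\bigr)
\]
and identify the four $n\times n$ blocks via Lemma \ref{rep}. A one-line calculation shows that the rows of $D^{-1}\mathcal{S}^{-1}(y)$ carry $|\lambda|$-weights $|\lambda|^{0}$ (top) and $|\lambda|^{-1/2}$ (bottom), so its two columns carry weights $|\lambda|^{-1/2}$ and $|\lambda|^{0}$ respectively. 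After left-multiplication by $D$, which introduces weights $|\lambda|^0$ in the top row and $|\lambda|^{1/2}$ in the bottom row, the $(\gamma,\alpha)$-block acquires the net weight $|\lambda|^{(|\gamma|+|\alpha|-1)/2}$, with the exponential factor coming from $\bar{\mathcal{F}}\bar\Pi_+$. This is precisely \eqref{hfbdeq} in the case $x>y$; the case $x<y$ follows symmetrically using $\tilde{\mathcal{F}}^{x\to y}$ and $\tilde\Pi_+(x)$.

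The main technical obstacle is confirming that the $O(|\lambda|^{-1/2})$ error in the rescaled ODE $\bar W'=\bar{\mathbb{B}}\bar W+O(|\lambda|^{-1/2})\bar W$ is genuinely absorbable for $|\lambda|\ge R$ uniformly in the sector $\Omega_\theta=\{\mathrm{Re}\,\lambda\ge -\theta_1-\theta_2|\mathrm{Im}\,\lambda|\}$, so that the spectral separation of $P_+,P_0$ and the quadratic form bound \eqref{beta} on $E_+$ persist; this in turn is what makes Lemmas \ref{flowlemma} and \ref{projlemma} valid on the full sector rather than on the half-plane $\mathrm{Re}\,\lambda\ge0$ alone. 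Once $R$ is chosen large enough to absorb this error (and to make $\sqrt{\lambda}$ have comparable real part to $|\lambda|^{1/2}$ throughout the sector), the block-scaling calculation above is mechanical and gives the stated bound directly.
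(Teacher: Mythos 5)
Your proposal is correct and follows essentially the same route as the paper: the paper's proof likewise combines the representation \eqref{repeq} with Lemmas \ref{flowlemma} and \ref{projlemma} to bound the rescaled kernel $\bar G_{\bar \lambda}$ by $Ce^{-|\bar x-\bar y|/C}$ and then passes back to the original coordinates. Your explicit bookkeeping with $D=\mathrm{diag}(I_n,|\lambda|^{1/2}I_n)$ merely spells out the step the paper compresses into ``whence \eqref{hfbdeq} follows in the original coordinates,'' and your resulting block weights $|\lambda|^{(|\gamma|+|\alpha|-1)/2}$ are the correct ones.
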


\begin{proof}
Recalling the coordinate-free representation \eqref{repeq}
and combining with \eqref{calF} and \eqref{Pi(y)}, we
find that the Green function $\bar G_{\bar \lambda}$
in rescaled coordinates $\bar x$, $\bar \lambda$ satisfies
\begin{equation}
\begin{aligned}
|\partial_x^\gamma \partial_y^\alpha
\bar G_{\bar \lambda}(\bar x,\bar y)|&\le
C e^{-\frac{|y-x|}{C}},
\end{aligned}
\end{equation}
whence \eqref{hfbdeq} follows in the original coordinates.
\end{proof}

\begin{rem}
The argument of Lemma \ref{projlemma} is the key new ingredient in the
resolvent estimates for the boundary layer case as compared to the
analysis on the whole line carried out for viscous shock layers in
\cite{ZH}, making essential use of compatibility of the boundary
condition with high-frequency behavior. On the whole line, there is
no such requirement and high-frequency stability is automatic.
\end{rem}
\section{Pointwise Green function bounds}

With the pointwise bounds established on the resolvent kernel
$G_\lambda$, we obtain pointwise bounds on the Green
function through the inverse Laplace transform formula by
a simplified version of the stationary-phase arguments used
in \cite{ZH} for the shock case, repeated here for completeness.

\begin{proof}[Proof of Theorem \ref{greenbounds}]
By sectoriality of L, we have the inverse Laplace transform
representation (see \cite{ZH}):
\\
\begin{equation}\label{iLT}
G(t;x,y)=\int_{\Gamma}e^{\lambda t}G_{\lambda}(x,y)d\lambda.
\end{equation}
Let $\theta_1 > 0,\ \theta_2 > 0$ be chosen sufficiently small, in
particular so small as to satisfy the hypotheses of all previous
assertions.
By assumption \eqref{D}, the large-$|\lambda|$
bounds on the Resolvent kernel, and analyticity
of the Evans function $D_L(\lambda)$, it follows that
$G_\lambda$ has finitely many poles in $\Omega_\theta$
(corresponding to roots of $D_L$), each with strictly negative real part.
Choosing $\theta_1,\theta_2$ still smaller, if necessary, we can thus arrange
that {\it $G_\lambda$ is analytic on $\Omega_\theta$}.
It follows from Cauchy's Theorem that
\begin{equation}
G(x,t;y)=\int_\Gamma e^{\lambda t} G_\lambda(x,y)\, d\lambda,
\end{equation}
for any contour $\Gamma$ that can be expressed as
$ \Gamma=\partial(\Omega_\theta \setminus \CalS) $ for $\CalS\subset \CC$ open.
\bigskip

{\bf Case I. $|x-y|/t$ large.} We first treat the trivial case that
$|x-y|/t\ge S$, $S$ sufficiently large, the regime in which standard
short-time parabolic theory applies. Set
\begin{equation}\label{largedef}
\bar{\alpha} :={\frac{|x-y|}{2 \beta t}}, \quad R:= \beta
\bar{\alpha}^2,
\end{equation}
where $\beta $ is as in \eqref{beta}, and consider again the
representation of $G$, that is
\begin{equation}
G(x,t;y)=\int_{\Gamma_1\cup \Gamma_2} e^{\lambda t} G_\lambda(x,y)
\, d\lambda,
\end{equation}
where $\Gamma_1:= \partial B(0,R)\cap \bar \Omega_\theta$ and
$\Gamma_2:= \partial \Omega_\theta \setminus B(0,R)$.
Note that the intersection of $\Gamma$ with the real axis is
$\lambda_{min}=R=\beta \bar{\alpha}^2$.
\end{proof}
By the large $|\lambda|$ estimates of Proposition 5.3, we have for
all $\lambda \in \Gamma_1\cup \Gamma_2$ that
$$
|G_\lambda (x,y)|\le C
\frac{e^{-\sqrt{|\lambda|}\frac{|y-x|}{c}}}{\sqrt{|\lambda|}}.
$$
Further, we have
\begin{equation}
\begin{aligned}
Re \lambda &\le  R(1- \eta \omega^2), \quad \lambda\in \Gamma_1, Re
\lambda &\le Re \lambda_0 - \eta (|Im \lambda| - |Im \lambda_0|),
\quad \lambda \in \Gamma_2
\end{aligned}
\end{equation}
for $R$ sufficiently large, where $\omega$ is the argument of
$\lambda$ and $\lambda_0$ and $\lambda_0^*$ are the two points of
intersection of $\Gamma_1$ and $\Gamma_2$, for some $\eta>0$
independent of $\bar{\alpha} $. Combining these estimates, we obtain
\begin{equation}
\begin{aligned}
|\int_{\Gamma_{1}} e^{\lambda t} G_\lambda  d\lambda| &\le
\int_{\Gamma_{1}}C |\lambda|^{-\frac{1}{2}}\, e^{Re \lambda t
-\beta^{-\frac{1}{2}} |\lambda|^{-\frac{1}{2}}|x-y| } \, d\lambda
\cr &\le C e^{-\beta \bar{\alpha} ^{2}t} \int_{-L}^{+L}
R^{-\frac{1}{2}}e^{-\beta R \eta \omega^2 t} \, R d\omega &\le
Ct^{-\frac{1}{2}} e^{-\beta \bar{\alpha} ^{2}t}.
\end{aligned}
\end{equation}
Likewise,
\begin{equation}
\begin{aligned}
|\int_{\Gamma_{2}} e^{\lambda t} G_\lambda  d\lambda| &\le
\int_{\Gamma_{2}}C |\lambda|^{-\frac{1}{2}}\, C e^{Re \lambda t
-\beta^{-\frac{1}{2}} |\lambda|^{-\frac{1}{2}}|x-y|} d\lambda \cr
&\le C
e^{Re(\lambda_{0})t-|\beta|^{-\frac{1}{2}}|\lambda_0|^{-\frac{1}{2}}
|x-y|} \int_{\Gamma_{2}} |\lambda|^{-\frac{1}{2}}e^{(Re
\lambda-Re\lambda_{0})t}\ |d\lambda|\cr
&\le C e^{-\beta \bar{\alpha} ^{2}t} \int_{\Gamma_2}  |Im \,
\lambda|^{-\frac{1}{2}} e^{-\eta|Im \, \lambda-Im \, \lambda_{0}|t}\
|d\, Im \lambda|\cr
& \le  Ct^{-\frac{1}{2}} e^{-\beta \bar{\alpha}^{2}t}. \cr
\end{aligned}
\end{equation}
Combining these last two estimates, we have
\begin{equation} \label{g1}
|G(x,t;y)|\le Ct^{-\frac{1}{2}} e^{\frac{-\beta \bar{\alpha}
^{2}t}{2}} e^{\frac{-(x-y)^{2}}{8\beta t}} \le
Ct^{-\frac{1}{2}}e^{-\eta t} e^{\frac{-(x-y)^{2}}{8\beta t}},
\end{equation}
for $\eta>0$ independent of $\bar{\alpha}$. Observing that
${|x-at|\over 2t} \le {\frac{|x-y|}{t}} \le{\frac{2|x-at|}{t}}$ for
any bounded $a$, for $\frac{|x-y|}{t}$ sufficiently large, we find
that this contribution may be absorbed in any summand $
t^{\frac{-1}{2}}e^{\frac{-(x-y-a_k^+ t)^2}{Mt}}.  $
\medskip \\
{\bf Case II. $|x-y|/t$ bounded.} We now turn to the critical case
that $|x-y|/t \le S$. A few remarks are in order at the outset. Our
goal is to bound $|G|$ by terms of form $C
t^{-1/2}e^{-{\bar{\alpha}}^2 t/M}$, where $\bar{\alpha}:=
(x-a_j^+(t-|y/a_k^+|)/2t$ or $\bar{\alpha}:=(x-y-a_k^+ t)/2t$ are
now {\it uniformly bounded}, by
\begin{equation}
|x-y|/2t + \max_j\{|a_j^+|\}/2 \le S/2 + \max{|a_j^+|}/2.
\end{equation}

Thus, in particular, {\it contributions of order $t^{-1/2}e^{-\eta
t}$, $\eta>0$, can be absorbed in any summand
$ t^{-1/2}e^{-(x-y-a_k^+ t)^2/Mt}) $} if we take $M$ sufficiently large.
Likewise, for $G_x$ and $G_y$,
contributions of order $t^{-1}e^{-\eta t}$ can be absorbed.
We will use this observation repeatedly.

In contrast to the previous case of large characteristic speed
$|x-y|/t\ge S$, we are not trying to show rapid time-exponential
decay.  Rather, we are trying to show that the rate of exponential
decay of the solution does not degrade too rapidly as $\bar{\alpha}
\to 0$: precisely, that it vanishes to order ${\bar{\alpha}}^2$ and
no more. Thus, the crucial part of our analysis will be for small
$\bar{\alpha}$. All other situations can be estimated crudely as
described just above.

Let $r$ be sufficiently small that the small-$|\lambda|$ bounds hold
on $B(0,r)$. Next, choose $\theta_1$ and $\theta_2$ still smaller
than before, if necessary, so that $\Omega_\theta \setminus B(0,r)
\subset \Lambda$. This implies that $\partial \Omega_\theta \cap
B(0,r)\ne \emptyset$, giving the configuration pictured in the
Figure. Similarly as in the previous case, define $\Gamma = \Gamma_1
\cup \Gamma_2$, where $\Gamma_1$ is the portion of the circle
$\partial B(0,r)$ contained in $\bar \Omega_\theta$, and $\Gamma_2$
is the portion of $\partial \Omega_\theta$ outside $B(0,r)$.

\begin{equation}
G(x,t;y)=\int_{\Gamma_1}e^{\lambda t} G_\lambda(x,y) \, d\lambda
+ \int_{\Gamma_2}e^{\lambda t} G_\lambda(x,y) \, d\lambda,
\end{equation}
we separately estimate the terms $\int_{\Gamma_1}$ and
$\int_{\Gamma_2}$.
\\
{\bf Large- and medium-$\lambda$ estimates.} The $\int_{\Gamma_2}$
term is straightforward. The points $\lambda_0$, $\lambda^*_0$ where
$\Gamma_1$ meets $\Gamma_2$ satisfy $Re(\lambda_0) = -\eta <0$.
Moreover, combining the results low frequency case, we have the
bound $|G_\lambda| \le C|\lambda|^{-\frac{1}{2}} $ for $\lambda \in
\Gamma_2$.
Thus, we have
\begin{equation}
\begin{aligned}
|\int_{\Gamma_{2}} e^{\lambda t} G_\lambda  d\lambda| &\le C e^{-Re
\, \lambda_0 t} \int_{\Gamma_2}  |Im \, \lambda|^{-\frac{1}{2}}
e^{-\eta|Im \, \lambda-Im \, \lambda_{0}|t}\ |d\, Im \lambda|\cr &
\le Ct^{-\frac{1}{2}} e^{-\eta t}.
\end{aligned}
\end{equation}
This contribution can be absorbed as described above.
An analogous computation using $|G_{\lambda_x}|,
\, |G_{\lambda_x}| \le C|\lambda|^{-1}$ shows that the $\Gamma_2$
contribution to $G_x$ and $G_y$ is $O(t^{-1} e^{-\eta t})$,
and can likewise be absorbed.\\
\\
{\bf Small $|\lambda|$ estimates}. It remains to estimate the
critical term $\int_{\Gamma_1} e^{\lambda t} G_\lambda d \lambda$.
This we will estimate in different ways, depending on the size of
$t$.\\
\\
\quad{\bf Bounded time}.  For $t$ bounded, we can use the
medium-$\lambda$ bounds $|G_\lambda|$, $|G_{\lambda_x}|$,
$|G_{\lambda_y}| \le C$ to obtain
$|\int_{\Gamma_1} e^{\lambda t} G_\lambda d \lambda| \le C_2 |\Gamma_1|$.
This contribution is order $Ce^{-\eta t}$ for bounded time, hence
can be absorbed.\\
\\
{\bf Large time}. For $t$ large, we must instead estimate
$\int_{\Gamma_1} e^{\lambda t} G_\lambda d \lambda$ using the
small-$|\lambda|$ expansions. First, observe that, all coefficient
functions $d_{jk}(\lambda)$ are uniformly bounded (since $|\lambda|$
is bounded in this case).


Expanding $G=\int_{\Gamma}e^{\lambda t}G_\lambda(x,y)d\lambda$ as
$$
\begin{pmatrix} G& G_x \\ G_y & G_{xy} \end{pmatrix}= \int_{\Gamma}
e^{\lambda t} \begin{pmatrix} G_\lambda & G_{\lambda_x} \\
G_{\lambda_y} & G_{\lambda_{xy}} \end{pmatrix} \, d\lambda
$$
we estimate the $\int_{\Gamma_1}$ contributions to $G$, $G_x$ and
$G_y$ simultaneously.
\\
{\bf Case II(i). $(0<y<x)$}.  By our low-frequency estimates, we have
\begin{equation} \label{gamma}
\begin{aligned}
\int_{\Gamma} e^{\lambda t} \begin{pmatrix} G_\lambda & G_{\lambda_x} \\
G_{\lambda_y} & G_{\lambda_{xy}} \end{pmatrix} \, d\lambda &=
\int_{\Gamma} \sum_{j,k} e^{\lambda t} \phi^+_j (x) d_{jk} \tilde
\psi^+_k (y) d\lambda\\
&\quad +\int_{\Gamma} \sum_{j,k} e^{\lambda t} \psi^+_k (x)  \tilde
\psi^+_k (y) d\lambda,\\
\end{aligned}
\end{equation}
where each $d_{jk}$ is analytic, hence bounded. We estimate
separately each of the terms
$$
\int_{\Gamma_1} e^{\lambda t} \phi^+_j (x) d_{jk} \tilde \psi^+_k
(y) d\lambda
$$
on the righthand side of \eqref{gamma}.
Estimates for terms
$$
\int_{\Gamma} \sum_{j,k} e^{\lambda t} \psi^+_k (x)  \tilde \psi^+_k (y) d\lambda
$$
go similarly.
\medskip

{\bf Case II(ia).} First, consider the critical case $a^+_j
>0$, $a^+_k
<0$ . For this case,
$$|\phi^+_{j(x)} d_{jk} \tilde \psi^+_k (y)| \le C e^{Re(\rho^+_j x - \nu^+_k y)},$$
where
$$
\begin{cases} \nu^+_k (\lambda) = -\lambda/a^+_k + \lambda^2 \beta^+_k
/ (a^+_k)^3 + \CalO (\lambda^3) \cr \rho^+_j (\lambda) = -\lambda /
a^+_j + \lambda^2 \beta^+_j / (a^+_j)^3 + \CalO(\lambda^3).
\end{cases}
$$

Set
$$
\bar{\alpha}= \frac{a^+_k x/a^+_j -y - a^+_k t}{2t}, \quad
p:=\frac{\beta^+_j a^+_k x / (a^+_j)^3 - \beta^+_k y / (a^+_k)^2
}{t}
>0.
$$
Define $\Gamma_{1a}'$ to  be the portion contained in
$\Omega_\theta$ of the hyperbola
\begin{equation}
\begin{aligned} \label{rho-nu}
&Re(\rho_j^+x - \nu_k^+y) + \CalO(\lambda ^3)(|x|+|y|)\cr &\quad=
(1/a_k^+)Re[\lambda(-a^+_k x/ a^+_j + y) + \lambda^2 (x \beta^+_j
a^+_k / (a^+_j)^3 - y \beta^-_k / (a^+_k)^2 )] \cr &\quad \equiv
\const \cr {} &\quad= (1/a_k^-) [(\lambda_{min}(-a^-_k x/ a^+_j +
y) + \lambda^2_{min} (x \beta^+_j a^+_k / (a^+_j)^3 - y \beta^+_k /
(a^+_k)^2)], \cr
\end{aligned}
\end{equation}
where
\begin{equation}
\lambda_{min} :=
\begin{cases}
\frac{\bar{\alpha}}{p} &  if  \quad |\frac{\bar{\alpha}}{p}|\le
\epsilon \cr \pm \epsilon & if \quad \frac{\bar{\alpha}}{p} \gtrless
\epsilon
\end{cases}
\end{equation}

Denoting by $\lambda_1$, $\lambda^*_1$, the intersections of this
hyperbola with $\partial \Omega_\theta$, define $\Gamma_{1_b}'$ to
be the union of $\lambda_1 \lambda_0$ and $\lambda^*_0 \lambda^*_1$,
and define $\Gamma_1' = \Gamma_{1_a}' \cup \Gamma_{1_b}'$. Note that
$\lambda = \bar{\alpha}/p$ minimizes the left
hand side of \eqref{rho-nu} for $\lambda$ real. Note also that
that $p$ is bounded for $\bar{\alpha}$
sufficiently small, since $\bar{\alpha}\le \epsilon$ implies that
$$
(|a_k^+x/ a_j^+| + |y|)/t \le 2|a_k^+| + 2\epsilon
$$
i.e. $(|x|+|y|)/t$ is controlled by $\bar{\alpha}$.

With these definitions, we readily obtain that
\begin{equation}
\begin{aligned}
Re(\lambda t + \rho^+_j x - \nu^+_k y) &\le -(t/a^-_k)
(\bar{\alpha}^2/4p) - \eta Im (\lambda)^2 t \cr {} &\le -
\bar{\alpha}^2 t/M - \eta Im (\lambda)^2 t,
\end{aligned}
\end{equation}
for $\lambda \in \Gamma_{1_a}'$ (note: here, we have used the
crucial fact that $\bar{\alpha}$ controls $(|x|+|y|)/t$, in bounding
the error term $\CalO(\lambda^3)(|x|+|y|)/t$ arising from expansion
Likewise, we obtain for any $q$ that
\begin{equation} \label{gamma'}
\int_{\Gamma_{1_a}'} |\lambda|^q e^{Re(\lambda t + \rho^+_j x -
\nu^-_k y)} d\lambda \le C t^{-\frac{1}{2} -\frac{q}{2}}
e^{-\bar{\alpha}^2 t / M},
\end{equation}
for suitably large $C,\, M>0$ (depending on $q$). Observing that
$$
\bar{\alpha}= (a_k^+/a_j^+)(x- a_j^+ (t - |y/a_k^+|))/2t,
$$
we find that the contribution of \eqref{gamma'} can be absorbed in
the described bounds for $t\ge |y/a_k^-|$. At
the same time, we find that $\bar{\alpha}\ge x > 0$ for $t\le
|y/a_k^+|$, whence
$$
\bar{\alpha}\ge (x- y - a_j^+t)/Mt +|x|/M,
$$
for some $\epsilon>0$ sufficiently small and  $M>0$ sufficiently
large.

This gives
$$
e^{-\bar{\alpha}^2/p}\le e^{- (x-y- a_k^+t)^2/Mt} e^{-\eta |x|}
$$
provided $|x|/t >a_j^+$, a contribution which can again be absorbed.
On the other hand, if $t\le |x/a_j^+|$, we
can use the dual estimate
\begin{equation}
\begin{aligned}
\bar{\alpha}&= (-y- a_k^+(t - |x/a_j^+|))/2t \cr &\ge  (x-y-
a_k^+t)/Mt  + |y|/M,
\end{aligned}
\end{equation}
together with $|y|\ge |a_k^- t|$, to obtain
$$
e^{-\bar{\alpha}^2/p}\le e^{- (x-y- a_j^+t)^2/Mt} e^{-\eta |y|},
$$
a contribution that can likewise be absorbed.


{\bf Case II(ib).}
In case $a^+_j<0$ or $a^{+}_k >0$, terms $|\varphi_j^+| \le C
e^{-\eta |x|}$ and $|\tilde{\psi}_j^+| \le C e^{-\eta |y|}$ are strictly
smaller than those already treated in  Case II(ia), so may be
absorbed in previous terms.
\bigskip
\\
{\bf Case II(ii) $(0<x<y)$.} The case $0<x<y$ can be treated very
similarly to the previous one; see \cite{ZH} for details.
This completes the proof of Case II, and the theorem.
\section{Nonlinear Analysis}
Introducing the perturbation variable
\begin{equation}
u(x,t):=\tilde u(x,t)-\bar u(x),
\end{equation}
we obtain
\begin{equation}
\label{perteq} u_t-Lu=Q(u)_x,
\end{equation}
where the second-order Taylor remainder satisfies
\begin{equation}
\begin{aligned}
Q(u)&:=
f(\bar u+u)- f(\bar u)- df(\bar u)u
=\mathcal{O}(|u|^2 )\\
\end{aligned}
\end{equation}
so long as $|u|$ remains bounded.

\begin{lem}[Integral formulation]\label{shorttime}
Under the assumptions of Theorem \ref{nonlin},
there exists a classical solution of \eqref{perteq} for
$0< t\le T$, $T>0$, continuous in $L^\infty(x)$ at $t=0$,
extending for all $t>0$ such that
$u(\cdot, t)$ remains sufficiently small in $L^1\cap L^\infty$,
given by
\begin{equation}\label{u}
\begin{aligned}
  u(x,t)&=\int^\infty_{0}G(x,t;y)g(y)\,dy
  +\int^t_0  G_y(x,t-s;0)B h(s)\, ds\\
  &\quad -\int^t_0 \int^\infty_{0} G_y(x,t-s;y)
  Q(u)(y,s)\,dy\,ds.\\
\end{aligned}
\end{equation}
\end{lem}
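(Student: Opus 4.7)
The plan is to combine standard local existence theory with a Duhamel-type representation, then extend globally using the forthcoming smallness estimates. First, short-time existence of a classical solution follows from standard theory for semilinear strictly parabolic systems on the half-line: by (H0)--(H1) and the hypothesis $B\equiv\const$ of Theorem~\ref{nonlin}, the linearization $L$ is uniformly sectorial on suitable spaces (say, $L^\infty$ with Dirichlet data), while the source $Q(u)_x$ with $Q(u)=\CalO(|u|^2)$ is smooth in $u$. Initial and boundary data of the stated regularity and decay satisfy the mild compatibility needed to produce a unique classical solution on some maximal interval $[0,T_*)$, continuous into $L^\infty(x)$ at $t=0^+$.

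For the integral formula, I decompose $u$ into three summands matching the three displayed integrals. The first, $\int_0^\infty G(x,t;y)g(y)\,dy$, is the standard evolution of the initial data by the homogeneous-Dirichlet Green function, satisfying $(\partial_t - L)u=0$, $u(x,0)=g(x)$, $u(0,t)=0$. For the nonlinear source term, Duhamel's principle gives $\int_0^t\int_0^\infty G(x,t-s;y)\,\partial_y Q(u)(y,s)\,dy\,ds$; integration by parts in $y$ produces the stated $-G_y$ form once the boundary term at $y=0$ is shown to vanish. This in turn follows from the dual identity $G(x,t;0)\equiv 0$, obtained from Lemma~\ref{greendual} applied to the boundary condition $G(0,t;y)\equiv 0$ satisfied by the original Green function (intuitively, the adjoint Dirichlet condition at $y=0$). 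The boundary-correction term $\int_0^t G_y(x,t-s;0)B\,h(s)\,ds$ plays the role of a Poisson integral: it solves the homogeneous linear equation with zero initial data and approaches $h(t)$ as $x\to 0^+$, with the factor $B$ arising naturally from the diffusive flux contribution at the boundary. Verification of both properties reduces to examining the behavior of $G_y$ near $y=0$ using the pointwise bounds of Theorem~\ref{greenbounds} together with an approximate-identity argument. Summing the three contributions and invoking uniqueness of classical solutions identifies $u$ with the displayed expression.

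Global extension to all $t>0$ then follows from a standard continuation principle combined with the a priori control $\|u(\cdot,t)\|_{L^1\cap L^\infty}$ small: since local solutions can be iterated as long as this norm remains small, and the iterative nonlinear argument of the next section uses the integral formula and bounds from Theorem~\ref{greenbounds} to keep this norm small, the local solution extends for all time.

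The principal obstacle is rigorously justifying the boundary-correction term $\int_0^t G_y(x,t-s;0)B\,h(s)\,ds$: both the Poisson-kernel trace property (that its $x\to 0^+$ limit reproduces $h(t)$) and the compensating identity $G(x,t;0)\equiv 0$ used in the integration by parts require care, resting on the duality of Lemma~\ref{greendual} together with the sharp resolvent and Green-function estimates from Sections~3--6. Once these two identities are in place, the remainder of the verification---that the candidate $u$ satisfies \eqref{perteq} with prescribed initial/boundary data---is a direct differentiation of the integrals.
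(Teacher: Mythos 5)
Your overall architecture (local classical existence, then verify the three-term representation and invoke uniqueness) differs from the paper's, and it contains a genuine gap at exactly the point you flag as the "principal obstacle." You propose to verify that the boundary term $\int_0^t G_y(x,t-s;0)B\,h(s)\,ds$ has trace $h(t)$ as $x\to 0^+$ by an approximate-identity argument based on the pointwise bounds of Theorem \ref{greenbounds}. Those bounds are \emph{upper} bounds only; they control the size and localization of $G_y$ near the corner but carry no information identifying the limit of the convolution (they are equally consistent with the limit being $0$ or $\tfrac12 h(t)$). To run an approximate-identity argument you would need precise short-time asymptotics of $G$ near the boundary (a parametrix/frozen-coefficient expansion showing $G_y(x,\tau;0)B\,d\tau$ integrates to an approximate delta in $t$), which is nowhere established in the paper and is not a consequence of the quoted estimates. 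There is also a smaller soft spot in the first step: since $g(0)$ need not equal $h(0)$, corner compatibility can fail, so "standard theory" does not directly hand you a classical solution continuous into $L^\infty(x)$ at $t=0$; the paper deliberately avoids this by working with the integral equation itself.

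The paper's route never needs the Poisson trace property. It shows that $G(x,t-s;y)$, as a function of $(y,s)$, solves the adjoint equation (Lemma \ref{greendual}, written out as \eqref{explicit}) and that $G(x,t-s;0)\equiv 0$ (from $G_\lambda(x,0)\equiv 0$ in the resolvent construction — your duality derivation of this fact is fine). Integrating $G(x,t-s;y)$ against $(\partial_s-L_y)u=Q(u)_y$ over $y>0$, $0<s<t$ and integrating by parts, the boundary terms at $y=0$ produce the term $\int_0^t G_y(x,t-s;0)B\,h(s)\,ds$ automatically (using $u(0,s)=h(s)$ and $G=0$ at $y=0$), so \eqref{u} is obtained as an identity valid for any classical solution, with no need to check that this term reproduces the boundary data. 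Existence is then obtained by treating \eqref{u} itself as the definition of a (weak) solution and running a contraction mapping/continuation argument with the convolution bounds of Lemmas \ref{iniconvolutions}--\ref{boundaryconvolution}, and smoothness follows by the bootstrap of Appendix \ref{smoothness}. If you want to salvage your decomposition-plus-uniqueness strategy, you must either supply the short-time boundary asymptotics of $G$ needed for the trace statement, or replace that verification by the duality/integration-by-parts computation above, which is both shorter and uses only facts already proved.
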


\begin{proof}
From Lemma \eqref{greendual} and the inverse Laplace representation
\eqref{iLT}
we find that $G(x,t-s;y)$
considered as a function of $y,s$ satisfies the adjoint equation
\begin{equation}\label{timeadj}
(\partial_s- L_y)^*G^*(x, t- \cdot; \cdot)= 0,
\end{equation}
or
\begin{equation}\label{explicit}
-G_s -(GA)_y + GA_y= (G_yB)_y.
\end{equation}
Likewise, reviewing the construction of the resolvent,
we find $G_\lambda(x,0)\equiv 0$, yielding
\begin{equation}
G(x,t-s;0)\equiv 0.
\end{equation}
That is, $G^*(x, t-\cdot; \cdot)$ is the Green function for the
adjoint equation, as may alternatively be seen directly
by a duality argument analogous to the proof of Lemma \eqref{greendual}.

Thus, integrating $G$ against \eqref{perteq}, integrating
by parts, and using the fact that $G=0$ and $u=h$ on the boundary
$y=0$, we obtain for any classical solution of \eqref{perteq} that
\begin{equation}
\begin{aligned}
\int_0^t\int_0^\infty & G(x,t-s;y)Q(u(y,s))_y \, dy\, ds=\\
& \int_0^t\int_0^\infty G(x,t-s;y)(\partial_s-L_y)u(y,s) \, dy\, ds\\
&=
\int_0^t\int_0^\infty ((\partial_s-L_y)^* G^*)^*(x,t-s;y)u(y,s) \, dy\, ds\\
  &+ u(x,t) -\int^\infty_{0}G(x,t;y)g(y)\,dy
  -\int^t_0  G_y(x,t-s;0)B h(s)\, ds,\\
\end{aligned}
\end{equation}
from which we obtain \eqref{u} by rearranging and integrating
by parts the term
$\int_0^t\int_0^\infty  G(x,t-s;y)Q(u(y,s))_y \, dy\, ds$.

Indeed, \eqref{u} may be taken as the definition of a weak solution
in $L^\infty(x,t)$.
(One can see using convolution identities
that this agrees with the usual definition in terms of
integration against test functions $\phi\in C^\infty_0(\R\times \R)$.)
Existence of weak solutions can be obtained by a standard contraction
mapping/continuation argument using the convolution bounds of
Lemmas \ref{iniconvolutions}--\ref{boundaryconvolution} below;
we omit the details, since we shall carry out quite similar but
more difficult estimates in the proof of stability.
Smoothness of solutions may then be obtained by a bootstrapping
argument as sketched in Appendix \ref{smoothness}.
\end{proof}

To establish stability, we use the following lemmas proved in \cite{HZ}.


\begin{lem}[Linear estimates \cite{HZ}]\label{iniconvolutions}
Under the assumptions of Theorem
\ref{nonlin},
\begin{equation}\label{iniconeq}
\begin{aligned}
\int_{0}^{+\infty}|G(x,t;y)|(1+|y|)^{-3/2}\, dy
&\le C(\theta+\psi_1+\psi_2)(x,t),\\
\end{aligned}
\end{equation}
for $0\le t\le +\infty$, some $C>0$.
\end{lem}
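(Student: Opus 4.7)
The plan is to substitute the pointwise Green function bound from Theorem \ref{greenbounds} into the integrand and estimate each resulting summand $\int_0^\infty K(x,t;y)(1+|y|)^{-3/2}\,dy$ separately, showing that each is dominated by one of $\theta$, $\psi_1$, or $\psi_2$. The bound in Theorem \ref{greenbounds} is already displayed as a sum of an exponentially localized background term $Ce^{-\eta(|x-y|+t)}$, traveling Gaussians $t^{-1/2}e^{-(x-y-a_k^\pm t)^2/Mt}$, and reflected Gaussian modes cut off by the indicator $\chi_{\{|a_k^+t|\ge |y|\}}$, so the decomposition is explicit.

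For the exponential background, a direct calculation gives
\[
\int_0^\infty e^{-\eta(|x-y|+t)}(1+|y|)^{-3/2}\,dy \le Ce^{-\eta t/2}(1+|x|)^{-3/2},
\]
which lies under $\psi_2$ outside the forward cone (using $e^{-\eta t/2}\le C(1+t^{1/2})^{-3/2}$) and under $\psi_1$ inside. For a typical traveling Gaussian with $a_k^+>0$, set $z:=x-a_k^+t$ and split
\[
I_k := \int_0^\infty t^{-1/2} e^{-(z-y)^2/Mt}(1+|y|)^{-3/2}\,dy
\]
according to whether the peak $y=z$ lies in $[0,\infty)$. When $z\ge 0$, the standard Gaussian-versus-algebraic convolution estimates (of the type carried out in Section 7 of \cite{HZ}) yield $I_k\le C(1+|x|+t)^{-1/2}(1+|z|)^{-1/2}$ for $x\in[0,a_n^+t]$, absorbed by $\psi_1$, and $I_k\le C(1+|z|+t^{1/2})^{-3/2}$ outside, absorbed by $\psi_2$. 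When $z<0$, only the Gaussian tail survives and $I_k\le Ct^{-1/2}e^{-z^2/Mt}$, absorbed by $\theta$.

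For the reflection terms, for each pair $a_k^+<0$, $a_j^+>0$ the change of variables $s:=t-y/|a_k^+|$ converts
\[
J_{jk} := \int_0^{|a_k^+|t} t^{-1/2} e^{-(x-a_j^+(t-y/|a_k^+|))^2/Mt}(1+|y|)^{-3/2}\,dy
\]
into the time convolution
\[
J_{jk} = |a_k^+|\int_0^t t^{-1/2} e^{-(x-a_j^+s)^2/Mt}(1+|a_k^+|(t-s))^{-3/2}\,ds
\]
of a Gaussian concentrated on the ray $x=a_j^+s$ against an algebraic weight. The same trichotomy (peak $s=x/a_j^+$ inside $[0,t]$, past $t$, or before $0$) then produces contributions of $\psi_1$-, $\psi_2$-, and $\theta$-type respectively, after noting that for $x$ inside the cone the two algebraic factors combine to give the $\psi_1$ form $(1+|x|+t)^{-1/2}(1+|x-a_j^+t|)^{-1/2}$.

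The main obstacle is a careful case analysis matching each Gaussian-algebraic convolution to the correct building block: the dividing lines $y=0$, $s=0,t$, the comparison of $t^{1/2}$ to $|x-a_j^+t|$, and the membership of $x$ in the forward cone $[0,a_n^+t]$ must be tracked simultaneously, and the indicator cutoff in the reflection terms handled consistently with these regions. Once the regions are correctly enumerated, each individual estimate reduces to an elementary Gaussian calculation of the type carried out in detail in \cite{HZ}, which is why the authors import the result by citation rather than reproving it.
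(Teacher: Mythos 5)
Your overall strategy---inserting the pointwise bounds of Theorem \ref{greenbounds} and estimating each Gaussian-against-algebraic convolution region by region---is exactly the computation the paper imports from \cite{HZ}, but the specific regional bounds you assert for the traveling-Gaussian terms are false near the characteristic rays, so one step fails as written. Take $a_k^+>0$, $z:=x-a_k^+t\ge 0$ with $0\le z\lesssim t^{1/2}$ and $t$ large. Then the Gaussian $t^{-1/2}e^{-(z-y)^2/Mt}$ has its peak within $O(t^{1/2})$ of $y=0$, where the weight $(1+|y|)^{-3/2}$ is of order one, so $I_k\approx t^{-1/2}$ (and trivially $I_k\le t^{-1/2}\int_0^\infty(1+y)^{-3/2}\,dy\le Ct^{-1/2}$ always). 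Your claimed bounds in this regime, $(1+|x|+t)^{-1/2}(1+|z|)^{-1/2}$ inside the cone and $(1+|z|+t^{1/2})^{-3/2}$ just outside it, are both strictly smaller than $t^{-1/2}$ as soon as $1\ll |z|\le t^{1/2}$ (try $z\sim t^{1/4}$), so neither can dominate $I_k$ there. The lemma survives because this near-ray contribution is precisely what $\theta$ is for: when $|x-a_k^+t|\lesssim t^{1/2}$ one has $e^{-|x-a_k^+t|^2/Lt}\ge e^{-1/L}$, so $\theta\ge c(1+t)^{-1/2}$ absorbs it.

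The fix is that your case analysis must be driven by the comparison of $|x-a_k^+t|$ with $t^{1/2}$ (equivalently, carry a term $Ct^{-1/2}e^{-(x-a_k^+t)^2/\tilde M t}$, absorbed by $\theta$ for $L$ large, in every regime), not merely by the sign of $z$ and membership of $x$ in the cone; only for $|z|\gtrsim t^{1/2}$ does the convolution obey the purely algebraic bounds $C(1+|z|)^{-3/2}\le C(1+|x|+t)^{-1/2}(1+|z|)^{-1/2}$ (inside the cone, where $1+|x|+t\le C(1+|z|)^2$) and $C(1+|z|+t^{1/2})^{-3/2}$ (outside). The same correction is needed for your reflected terms $J_{jk}$: when the peak $s=x/a_j^+$ lies in $[0,t]$ but within $O(t^{1/2})$ of $s=t$, the weight $(1+|a_k^+|(t-s))^{-3/2}$ is of order one near the peak and $J_{jk}\approx t^{-1/2}$, which again exceeds your $\psi_1$-type bound and must be routed to $\theta$ (centered at $x=a_j^+t$). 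With that refinement the argument closes, and small-$t$ issues disappear since $(1+|z|)^{-3/2}$-type terms are then compared directly with $\psi_2$; otherwise the proof is the standard one from \cite{HZ} that the paper cites.
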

\begin{lem}[Nonlinear estimates \cite{HZ}]\label{convolutions}
Under the assumptions of Theorem \ref{nonlin},
\begin{equation}\label{coneq}
\begin{aligned}
\int_0^t\int_{0}^{+\infty}|G_y(x,t-s;y)|\Psi(y,s)\, dy
ds
&\le C(\theta+\psi_1+\psi_2)(x,t),\\
\end{aligned}
\end{equation}
for $0\le t\le +\infty$, some $C>0$, where
\begin{equation}\label{source}
\begin{aligned}
\Psi(y,s)&:=
(1+s)^{1/2}s^{-1/2}(\theta + \psi_1+\psi_2)^2(y,s)\\
&\qquad + (1+s)^{-1} (\theta+\psi_1+\psi_2)(y,s).
\end{aligned}
\end{equation}
\end{lem}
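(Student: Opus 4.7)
The plan is to substitute the pointwise Green function bound from Theorem \ref{greenbounds} (taking $|\alpha|=1$) into the left-hand side of \eqref{coneq}, distribute the source $\Psi=\Psi^{q}+\Psi^{\ell}$ into its quadratic and linear pieces, and bound each resulting space-time convolution by a multiple of one of the templates $\theta$, $\psi_1$, or $\psi_2$. The overall scheme parallels the whole-line viscous shock analysis of \cite{HZ}; the genuinely new ingredient is the handling of the boundary-reflection terms carrying the indicator $\chi_{\{|a_k^{+}t|\ge |y|\}}$, which I would treat separately.

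First I would decompose $|G_y|\le R+E+S$, where $R$ is the exponentially localized residual $Ce^{-\eta(|x-y|+t)}$, $E$ is the sum of the pure transport-diffusion terms $(t^{-1/2}+e^{-\eta|y|})\,t^{-1/2}e^{-(x-y-a_k^{+}t)^2/Mt}$ coming from the direct characteristics, and $S$ collects the reflection contributions. The convolution of $R$ against $\Psi$ is essentially trivial: splitting the $s$-integration at $t/2$, one obtains an overall exponentially small factor $e^{-\eta t/2}$, which can be absorbed into $\theta$ on the hyperbolic cone and into $\psi_1+\psi_2$ off it.

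Next I would handle $E$. The key computation is the standard Gaussian-versus-algebraic convolution identity: convolving the heat kernel $(t-s)^{-1/2}e^{-(x-y-a_j^{+}(t-s))^2/M(t-s)}$ against a profile of type $(1+|y-a_k^{+}s|)^{-1/2}(1+|y|+s)^{-1/2}$ reproduces, after splitting $s\in[0,t/2]\cup[t/2,t]$ and performing the $y$-integration by completing the square, a bound of the same form in the output variables $(x,t)$, i.e.\ a $\psi_1$-type contribution; sharply Gaussian (i.e.\ $\theta$-type) sources reproduce $\theta$. The weights $(1+s)^{1/2}s^{-1/2}$ and $(1+s)^{-1}$ built into $\Psi$ are precisely what is needed to make, respectively, the quadratic and linear $s$-integrals converge uniformly in $t$.

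The reflection piece $S$ is the main obstacle. There I would use the indicator together with the factor $e^{-\eta|y|}$ (or the algebraic decay in $y$ supplied by $\Psi$) to localize the $y$-integration to $|y|\le |a_k^{+}|s$, and then change variables $y\mapsto\sigma:=|y/a_k^{+}|$ so that the Gaussian factor becomes $(t-s)^{-1/2}e^{-(x-a_j^{+}(t-s-\sigma))^2/M(t-s)}$, i.e.\ a heat kernel along the outgoing characteristic $a_j^{+}$ emitted from the boundary at the delayed time $s+\sigma$. After this substitution the estimate reduces to the same class of Gaussian-versus-algebraic convolutions as above, but one must verify that the geometric constraint $\sigma\le s\le t$ is compatible with the structure of $\psi_1$ in the cone $0\le x\le a_j^{+}t$ and that the outer characteristic $a_n^{+}t$ in $\psi_2$ is correctly reproduced on the complement. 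The bookkeeping across all sign configurations of $a_j^{+}, a_k^{+}$ is the delicate combinatorial content of the argument; once carried out as in \cite{HZ}, the claimed bound \eqref{coneq} follows.
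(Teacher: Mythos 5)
The paper offers no proof of this lemma at all---it is imported directly from \cite{HZ} (``the following lemmas proved in \cite{HZ}'')---and your sketch reconstructs precisely the pointwise Gaussian-convolution machinery of that reference: substitution of the bounds of Theorem \ref{greenbounds} with $|\alpha|=1$, splitting of $|G_y|$ into residual, direct, and reflected pieces, $s$-splitting at $t/2$, Gaussian-versus-algebraic convolution estimates, and the delayed-characteristic substitution $\sigma=|y/a_k^{+}|$ for the boundary-reflection terms, so your approach coincides with the one the paper relies on. One small correction: the indicator in \eqref{Gbounds}, evaluated at elapsed time $t-s$, localizes to $|y|\le |a_k^{+}|(t-s)$, i.e.\ $\sigma\le t-s$ rather than $\sigma\le s$, which is the geometric constraint you should carry through the reflected-term estimate.
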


We require also the following estimate accounting
boundary effects.

\begin{lem}[Boundary estimate]\label{boundaryconvolution}
Under the assumptions of Theorem
\ref{nonlin},
\begin{equation}\label{bconeq}
\begin{aligned}
\Big|\int_{0}^{t}G_y(x,t-s;0)B h(s)\, ds\Big|
&\le CE_0(\theta+\psi_1+\psi_2)(x,t),\\
\end{aligned}
\end{equation}
for $0\le t\le +\infty$, some $C>0$.
\end{lem}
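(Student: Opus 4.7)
The plan is to apply Theorem \ref{greenbounds} at $y=0$ with $|\alpha|=1$, $|\gamma|=0$, substitute the resulting pointwise bound into \eqref{bconeq}, and reduce to Gaussian convolution estimates in $s$ of the same type already handled in Lemmas \ref{iniconvolutions}--\ref{convolutions}. Setting $y=0$ in \eqref{Gbounds}, observing that $e^{-\eta|y|}=1$ and that the indicator $\chi_{\{|a_k^+ t|\ge |y|\}}\equiv 1$ there, one obtains
\begin{equation*}
|G_y(x,t-s;0)|\le Ce^{-\eta(|x|+t-s)} + C\bigl((t-s)^{-1/2}+1\bigr)\sum_{j}(t-s)^{-1/2}e^{-(x-a_j^+(t-s))^2/M(t-s)},
\end{equation*}
where $j$ ranges over the positive eigenvalues obtained by consolidating the two Gaussian sums in \eqref{Gbounds}.

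Three contributions to the boundary integral then need bounding against the hypothesis $|h(s)|\le E_0(1+s)^{-3/2}$. The pure exponential term integrates trivially to $CE_0 e^{-\eta|x|}(1+t)^{-3/2}$, absorbable into $E_0(\theta+\psi_2)(x,t)$. The Gaussian terms carrying the prefactor $1$ produce standard heat-kernel-against-polynomial convolutions
\begin{equation*}
\int_0^t(t-s)^{-1/2}e^{-(x-a_j^+(t-s))^2/M(t-s)}E_0(1+s)^{-3/2}\,ds;
\end{equation*}
splitting $s\le t/2$ and $s\ge t/2$ and applying the Gaussian-moment bounds already carried out in \cite{HZ} yields $CE_0(\theta+\psi_1+\psi_2)(x,t)$.

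The main obstacle is the remaining family, carrying the singular prefactor $(t-s)^{-1}$:
\begin{equation*}
\int_0^t(t-s)^{-1}e^{-(x-a_j^+(t-s))^2/M(t-s)}E_0(1+s)^{-3/2}\,ds.
\end{equation*}
After substituting $\tau=t-s$, the integrand $\tau^{-1}e^{-(x-a_j^+\tau)^2/M\tau}$ is suppressed by the Gaussian factor for $x$ bounded away from $0$, but the estimate degenerates as $x\to 0$, reflecting the non-tightness of the pointwise bound in the presence of the Dirichlet condition $G(0,\cdot;\cdot)\equiv 0$. To close this gap, we invoke the additional hypothesis $|h'(s)|\le E_0(1+s)^{-1}$ via the rearrangement $h(s)=h(t)-\int_s^t h'(\sigma)\,d\sigma$, producing
\begin{equation*}
\int_0^t G_y(x,t-s;0)h(s)\,ds = h(t)\int_0^t G_y(x,t-s;0)\,ds - \int_0^t h'(\sigma)\int_0^\sigma G_y(x,t-s;0)\,ds\,d\sigma.
\end{equation*}
The time-averaged kernel $\int_0^\sigma G_y(x,t-s;0)\,ds=\int_{t-\sigma}^t G_y(x,\tau;0)\,d\tau$ is the $x$-derivative of the solution to a boundary-value problem with unit boundary data on $[t-\sigma,t]$ and, being integrated in $\tau$, no longer exhibits the $\tau^{-1}$ endpoint singularity; standard parabolic estimates show it is bounded by a template of the form $(\theta+\psi_1+\psi_2)$ uniformly in $x\ge 0$. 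Combined with the integrable decay of $h(t)$ and $h'(\sigma)$, this produces the desired $CE_0(\theta+\psi_1+\psi_2)(x,t)$ bound. Summing the three contributions completes the argument.
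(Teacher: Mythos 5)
Your overall strategy (split off the singular boundary contribution near $s=t$ and use the extra hypothesis on $h'$ through an integration by parts in time) is in the same spirit as the paper's, and your treatment of the nonsingular contributions matches the paper's remark that they follow as in Lemma \ref{convolutions}. But the crucial step is not justified. You claim that the time-integrated kernel $\int_{t-\sigma}^{t}G_y(x,\tau;0)\,d\tau$ ``no longer exhibits the $\tau^{-1}$ endpoint singularity'' and is bounded by the template ``by standard parabolic estimates, uniformly in $x\ge 0$.'' The only pointwise information available from Theorem \ref{greenbounds} at $y=0$ is $|G_y(x,\tau;0)|\le C\tau^{-1}e^{-|x|^2/C\tau}$ (plus absorbable terms), and integrating this in $\tau$ gives
\begin{equation*}
\int_0^{1}\tau^{-1}e^{-|x|^2/C\tau}\,d\tau \;\sim\; \log\bigl(1/|x|\bigr)\quad\text{as } x\to 0,
\end{equation*}
which is \emph{not} uniformly bounded; uniform boundedness would require a refined boundary bound of Poisson-kernel type, $|G_y(x,\tau;0)|\lesssim |x|\,\tau^{-3/2}e^{-|x|^2/C\tau}$, which is neither stated nor proved in the paper, and for systems there is no maximum principle to fall back on. So as written, the key estimate of your argument is an unproven assertion, and it is exactly the point where the lemma is hard.

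The paper closes this gap with a structural identity rather than a sharper pointwise bound: since $G(x,\cdot\,;0)\equiv 0$, integrating the adjoint equation \eqref{explicit} in $y$ over $(0,\infty)$ gives \eqref{keyrel},
\begin{equation*}
G_yB\big|_{y=0}= -\int_0^{+\infty}A_y\,G\,dy-\partial_s\!\int_0^{+\infty}G\,dy,
\end{equation*}
which converts the boundary flux into $y$-integrals of $G$ itself; substituting this into $\int_{t-1}^{t}G_yBh\,ds$ and integrating by parts in $s$ (this is where $h'$ enters) yields terms controlled simply by $\int|G|\,dy\le C$, hence by $\max_{0\le\tau\le1}(|h|+|h'|)(t-\tau)\lesssim E_0(1+t)^{-1}$; spatial decay is then recovered separately from the crude bound \eqref{basicest}, giving the absorbable bound $E_0(1+t)^{-1}(1+|x|)^{-2}$. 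Note also that the paper performs the integration by parts only on the short interval $s\in[t-1,t]$; your rearrangement $h(s)=h(t)-\int_s^t h'(\sigma)\,d\sigma$ over all of $[0,t]$ is additionally problematic because $|h'(\sigma)|\le E_0(1+\sigma)^{-1}$ is not integrable in time, so the double integral you produce does not obviously return the decaying template without further structure. To repair your proof you would need either to establish the refined $|x|\,\tau^{-3/2}$ boundary bound or to adopt the paper's device \eqref{keyrel}.
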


\begin{proof}
The estimate on $\int_0^{t-1}$, where $G_y(x,t-s;0)$
is nonsingular,
follows readily by estimates similar to but somewhat
simpler than those of Lemma \eqref{convolutions},
which we therefore omit.

To bound the singular part $\int_{t-1}^t$, we integrate
\eqref{explicit} in $y$ from $0$ to $+\infty$, recalling
that $G(x,t-s;0)\equiv 0$, to obtain
\begin{equation}\label{keyrel}
G_yB= -\int_0^{+\infty}A_y(y)G(x,t-s;y)\, dy -
\int_0^{+\infty}G_s(x,t-s;y)\, dy.
\end{equation}
Substituting in the lefthand side of \eqref{bconeq}, and
integrating by parts in $s$, we obtain
\begin{equation}\label{partsest}
\begin{aligned}
\int_{t-1}^t G_yBh(s)\, ds
&= \int_{0}^1 \Big(\int_0^{+\infty}A_y(y)G(x,\tau ;y)\, dy \Big) h(t-\tau)\, d\tau  \\
&\quad - \int_0^1 \Big( \int_0^{+\infty}G(x,\tau;y)\, dy \Big)
h'(t-\tau)\, d\tau\\
&\quad +
\Big(\int_0^{+\infty}G(x,1;y)\, dy\Big) h(t-1),
\end{aligned}
\end{equation}
which by $\int |G|  dy\le C$
has norm bounded by $\max_{0\le \tau\le 1}(|h|+|h'|)(t-\tau)$.

Combining this with the more straightforward estimate
\begin{equation}\label{basicest}
\begin{aligned}
\Big|\int_{t-1}^{t}G_y(x,t;0)B h(s)\, ds\Big|&\le
\int_{0}^{1}|G_y(x,\tau ;0)|B h(s)\, ds\\
&\le
C\max_{0\le \tau\le 1}|h(t-\tau)|
\int_{0}^{1}\tau^{-1} e^{-|x|^2/C\tau}\, d\tau\\
&=C|x|^{-2}\max_{0\le \tau\le 1}|h(t-\tau)|\\
&\quad \times
\int_{0}^{1}(|x|^2/\tau) e^{-|x|^2/C\tau}\, d\tau\\
&\le C\max_{0\le \tau\le 1}|h(t-\tau)| |x|^{-2},
\end{aligned}
\end{equation}
we find that the contribution from $\int_{t-1}^t$
has norm bounded by
$$
\max_{0\le \tau\le 1}(|h|+|h'|)(t-\tau)(1+|x|)^{-2}.
$$
Combining this estimate with the one for $\int_0^{t-1}$,
we obtain \eqref{bconeq}.
\end{proof}

With these preparations, the proof of
stability is straightforward.

\begin{proof}[Proof of Theorem \ref{nonlin}]
Define
\begin{equation}\label{zeta2}
 \zeta(t):= \sup_{y, 0\le s \le t}
 |u|(\theta+\psi_1+\psi_2)^{-1}(y,t).
\end{equation}
We will establish:

{\it Claim.} For all $t\ge 0$ for which a
solution exists with $\zeta$ uniformly bounded by some fixed,
sufficiently small constant, there holds
\begin{equation}
\label{claim} \zeta(t) \leq C_2(E_0 + \zeta(t)^2).
\end{equation}
\medskip
{}From this result, provided $E_0 < 1/4C_2^2$, we have that
$\zeta(t)\le 2C_2E_0$ implies $\zeta(t)< 2C_2E_0$, and so we may
conclude by continuous induction that
 \begin{equation}
 \label{bd}
  \zeta(t) < 2C_2E_0
 \end{equation}
for all $t\geq 0$. (By Lemma \ref{shorttime} and standard short-time
estimates, $u\in C^0(x)$ exists and
$\zeta$ remains continuous so long as $\zeta$ remains bounded by
some uniform constant, hence \eqref{bd} is an open condition.
From \eqref{bd} and the definition of $\zeta$
in \eqref{zeta2} we then obtain the bounds of \eqref{pointwise}.
Thus, it remains only to establish the claim above.
\medskip

{\it Proof of Claim.} We must show that
$u(\theta+\psi_1+\psi_2)^{-1}$ is
bounded by $C(E_0 + \zeta(t)^2)$, for some $C>0$, all $0\le s\le t$,
so long as $\zeta$ remains sufficiently small.
By \eqref{zeta2},
we have for all $t\ge 0$ and some $C>0$ that
\begin{equation}\label{ubounds}
\begin{aligned}
|u(x,t)| &\le \zeta (t)(\theta +\psi_1+\psi_2)(x,t),\\
\end{aligned}
\end{equation}
and therefore
\begin{equation}\label{Nbounds}
\begin{aligned}
|Q(u)(y,s)|&\le C\zeta(t)^2 \Psi(y,s)
\end{aligned}
\end{equation}
with $\Psi$ as defined in \eqref{source}, for $0\le s\le t$.
Combining \eqref{Nbounds} with representation
(\ref{u}) and applying Lemmas
\ref{iniconvolutions}--\ref{boundaryconvolution}, we obtain
\begin{equation}
 \begin{aligned}
  |u(x,t)| &\le
  \int^\infty_{0} |\tilde G(x,t;y)| |g(y)|\,dy
  +\Big|\int^t_0  G_y(x,t-s;0)B h(s)\, ds\Big|
   \\
 &\qquad +\int^t_0
  \int^\infty_{0}|\tilde G_y(x,t-s;y)||(Q(u))(y,s)| dy \, ds \\
  & \le
  E_0 \int^\infty_{0} |\tilde G(x,t;y)|(1+|y|)^{-3/2}\,dy
   \\
  &\quad +\Big|\int^t_0  G_y(x,t-s;0)B h(s)\, ds\Big|\\
 &\quad +
C\zeta(t)^2 \int^t_0
  \int^\infty_{0}|\tilde G_y(x,t-s;y)|
\Psi(y,s) dy \, ds \\
&\le C(E_0+\zeta(t)^2)(\theta + \psi_1+\psi_2)(x,t).
\end{aligned}
\end{equation}

Dividing by $(\theta+\psi_1+\psi_2)(x,t)$,
we obtain \eqref{claim} as claimed.
This completes the proof of the claim, and the theorem.
\end{proof}

\appendix

\section{Smoothness of solutions}\label{smoothness}

In this appendix, we briefly sketch the proof that weak solutions
defined by \eqref{u} are necessarily smooth, classical solutions as well,
by indicating how to get the necessary derivative bounds.

\medskip
{\bf Time-derivative.}
Rewriting the second, boundary term, on the righthand side of \eqref{u}
using its convolution structure, as
$$
 \int^t_0  G_y(x,\tau ;0)B h(t-\tau)\, d\tau,
$$
and differentiating in $t$, we obtain
$$
 G_y(x,t ;0)B h(0) +
 \int^t_0  G_y(x,\tau ;0)B h'(t-\tau)\, d\tau,
$$
for which the first term is bounded and smooth for $x,t>0$, and the
second by the same estimate as in \eqref{basicest} is bounded by
$$
C|x|^{-2}\int_0^t |h'(t-\tau)|\,d\tau \le C|x|^{-2}\log (1+t).
$$

Differentiating the first and third terms with respect to $t$
and integrating the third term by parts in $y$ yields
\begin{equation}
\begin{aligned}
  \int^\infty_{0}G(x,t;y)g(y)\,dy
  &-\int^t_{t/2} \int^\infty_{0} G_y(x,t-s;y) Q(u)_s(y,s)\,dy\,ds\\
  &\quad -\int^{t/2}_0 \int^\infty_{0} G_{yt}(x,t-s;y) Q(u)(y,s)\,dy\,ds,
\end{aligned}
\end{equation}
from which, in combination with the boundary estimate already performed,
we may readily obtain a short-time bound $|u_t|\le C|x|^{-2}t^{-1}$
by Picard iteration.

{\bf Spatial-derivatives.}  Likewise, differentiating \eqref{partsest} with respect
to $x$, we may bound the $x$-derivative of
the boundary term $\int_0^t G_yB\, ds$ by
$$
C\int_0^t \tau^{-1/2} (|h'|+|h|)(t-\tau)|\,d\tau \le C\log (1+t).
$$
Differentiating the first and third terms of the righthand side of \eqref{u}
with respect to $x$ and integrating the third term by parts in $y$ yields
\begin{equation}
\begin{aligned}
  \int^\infty_{0}G(x,t;y)g(y)\,dy
  &-\int^t_{t/2} \int^\infty_{0} G_x(x,t-s;y) Q(u)_y(y,s)\,dy\,ds\\
  &\quad -\int^{t/2}_0 \int^\infty_{0} G_{yx}(x,t-s;y) Q(u)(y,s)\,dy\,ds,
\end{aligned}
\end{equation}
from which, in combination with the boundary estimate already performed,
we obtain a short-time bound $|u_x|\le Ct^{-1/2}$ by Picard iteration.
From the bounds on $|u_t|$ and $|u_x|$, finally, we obtain bounds on
$|u_{xx}|$ by the equation satisfied by $u$.

\medskip
{\bf Acknowledgement.}
This work was supported in part by the
National Science Foundation grant number DMS-0300487.


\end{document}